\theoremstyle{plain}
\newtheorem{theorem}{Theorem}[section]
\newtheorem{proposition}[theorem]{Proposition}
\newtheorem{lemma}[theorem]{Lemma}
\newtheorem{remark}[theorem]{Remark}
\newtheorem{corollary}[theorem]{Corollary}
\numberwithin{figure}{section}
\numberwithin{table}{section}
\newcounter{asnr}
\ifnum\value{asnr}=0 \stepcounter{asnr} 
	\newcounter{defnr}
\ifnum\value{defnr}=0 \stepcounter{defnr} 
		\newcommand{\R}{\mathbb{R}}
		\newcommand{\pl}{\partial}
\newcommand{\eps}{\varepsilon}
  \newcommand{\HS}{\mathbf{H}}
\newcommand{\LS}{\mathbf{L}}
\newcommand{\CS}{\mathbf{C}}
		\numberwithin{equation}{section} \allowdisplaybreaks
\title[stable determination of the time-dependent matrix potential]{Stable determination of a time-dependent matrix potential for a wave equation in an infinite waveguide}
\author[N. Kumar, T. Sarkar and M. Vashisth]
		{Nitesh Kumar$^\dagger$, Tanmay Sarkar$^\ddagger$ and Manmohan Vashisth$^{*}$}	
		\date{}
		\address[]{$^\dagger$Department of Mathematics, 
			Indian Institute of Technology Jammu,
			NH-44 Bypass Road, Nagrota PO, Jagti, 
			Jammu - 181221, INDIA.}
		\email[]{2019rma0004@iitjammu.ac.in}
		\address[]{$^\ddagger$Department of Mathematics, 
			Indian Institute of Technology Jammu,
			NH-44 Bypass Road, Nagrota PO, Jagti, 
			Jammu - 181221, INDIA.}
		\email[]{tanmay.sarkar@iitjammu.ac.in}
		\address[]{$^{*}$Department of Mathematics,  Indian Institute of Technology Ropar, Rupnagar, Punjab-140001, INDIA.}
		\email[]{\tt\  manmohanvashisth@iitrpr.ac.in}
	\subjclass[2021]{Primary: 35R30, 35L05; Secondary: 35L20.}
\keywords{Inverse problems, input-output map, light ray transform, infinite cylindrical domain, time-dependent matrix potential.}
\begin{document}
\begin{abstract}
    We analyze the stability of an inverse problem for determining the time-dependent matrix potential appearing in the Dirichlet initial-boundary value problem for the wave equation in an unbounded cylindrical waveguide. The observation is given by the input-output map associated with the wave equation. Considering a suitable geometric optics solution and with the help of \emph{light ray transform}, we demonstrate the stability estimate in the determination of the time-dependent matrix potential from the given input-output map.
\end{abstract}
\maketitle
\section{Introduction}
The present paper is concerned with an inverse problem of determining a matrix-valued potential $\displaystyle q(t,x):=((q_{ij}(t,x)))_{1\leq i,j\leq n}$ in an unbounded cylindrical domain $\Omega = \omega \times \mathbb{R}$,
where $\omega$ is a $\mathcal{C}^\infty$ bounded open connected domain of the Euclidean space $\mathbb{R}^2$. For time $T>0$, we denote $\Omega_T:=(0,T)\times\Omega$ and the lateral boundary of $\Omega_T$ by  $\Sigma:=(0,T)\times\partial\Omega$ where $\displaystyle \partial \Omega:= \partial \omega\times \R$. We consider the following initial boundary value problem (IBVP) for the system of  wave equations:
\begin{equation}\begin{cases}\label{hprs}
   \mathcal{L}_{q}\overrightarrow {u}(t,x) = \overrightarrow {0}, &\quad (t,x)\in \Omega_T,\\
    \overrightarrow {u}(0,x)=\overrightarrow {\phi}(x),~~ \partial_t \overrightarrow {u}(0,x) = \overrightarrow {\psi}(x), &\quad x\in\Omega,\\
    \overrightarrow {u}(t,x) = \overrightarrow {f}(t,x), &\quad (t,x)\in  \Sigma,
\end{cases}
\end{equation}
where the operator $\mathcal{L}_{q}$ is described by
\begin{equation}\label{operator Lq}
\begin{split}
     \mathcal{L}_{q}\overrightarrow {u}(t,x) :&=\square \overrightarrow {u}(t,x) + q(t,x) \overrightarrow {u}(t,x)\\
     &={\displaystyle  \begin{bmatrix} \square u_1(t,x)+\sum_{j=1}^{n}q_{1j}(t,x)u_j(t,x) \\  \square u_2(t,x)+\sum_{j=1}^{n} q_{2j}(t,x)u_j(t,x) \\ \vdots \\ \square u_n(t,x)+\sum_{j=1}^{n} q_{nj}(t,x)u_j(t,x) \end{bmatrix}} ,\qquad (t,x)\in\Omega_T
\end{split}
\end{equation}
in which $\square:= \partial_t^2-\Delta_{x}$ denotes the standard wave operator and $$q(t,x)=((q_{ij}(t,x)))_{1\leq i,j\leq n} \ \  \mbox{with} \ \  q_{ij}\in {W}^{1,\infty}(\Omega_T),\ \ \mbox{for all}\ \  1\leq i,j\leq n,$$ represents a time-dependent matrix potential and $\displaystyle \overrightarrow {u}(t,x):=\left(u_1(t,x), u_2(t,x),\cdots, u_n(t,x)\right)^{T}$ represent the displacement vector. 

Following \cite{kian2013stability}, we introduce the following spaces 
\begin{align*}
H^s(\partial\Omega)&= H^s(\mathbb{R}_{x_3};L^2(\partial\omega))\cap L^2 (\mathbb{R}_{x_3};H^s(\partial\omega)),\\
H^{r,s}((0,T)\times X)&= H^r(0,T;L^2(X))\cap L^2(0,T;H^s(X)),
\end{align*}
where $X=\Omega$ or $X=\partial\Omega$, $s>0$ and $r\geq 0$. Suppose $\partial_{\tau}$ denotes the tangential derivative with respect to $\partial\omega$. Furthermore, we also set
{\begin{align*}
      L=\Big\{(\phi,\psi, f)\in H^1(\Omega)\times L^2(\Omega)\times H^{\frac{3}{2},\frac{3}{2}}( \Sigma): \phi|_{\partial\Omega}= f|_{t=0}, & ~\partial_{t}f,~
       \partial_{\tau}f,~ \partial_{x_3}f \in L^2\Big(\Sigma;d\sigma(x)\frac{dt}{t}\Big) \Big\}
\end{align*}
with $\|(\phi,\psi, f)\|_L$ defined by 
\begin{align*}
\|(\phi,\psi, f)\|^2_L=&
\|\phi\|^2_{H^1(\Omega)}  + \|\psi\|^2_{L^2(\Omega)} +
\|f\|^2_{H^{\frac{3}{2},\frac{3}{2}}(\Sigma)}\\
& +\int_{\Sigma}\frac{|\partial_tf|^2 +|\partial_\tau f|^2+|\partial_{x_3}f|^2 }{t}d\sigma(x)~dt.
\end{align*}}
In addition, throughout this paper, we will use the notation, $\mathbf{L}:= L\times \cdots \times L$ ($n$-times) and ${\HS}^s(X)$ represents the space of the vector-valued functions defined on $X$ with each of its component in $H^s(X)$. Similar notations will also be incorporated for the spaces $\CS^k(X)$, $\LS^2(X)$ as well.
{ To ensure the well-posedness of the IBVP \eqref{hprs}, we impose the initial data and boundary data $(\overrightarrow {\phi},\overrightarrow {\psi},\overrightarrow {f}) \in \mathbf{H}^{1} (\Omega)\times\mathbf{L}^{2}(\Omega)\times \mathbf{L}$ and assume the following  {\it global compatibility conditions} (see \cite[pp. 100]{lions1968problemes} for more details) 
\begin{align}\label{Global compatibility}
     \begin{cases}
        & \int_{\R}\int_{\R}\int_0^{\infty} \big|\partial_{x_2} \overrightarrow{\phi}|_{x_1=r} - \partial_{x_2}\overrightarrow{f}|_{t=r} \big|^2\frac{dr}{r} dx_2 dx_3 <\infty,\\
        & \int_{\R}\int_{\R}\int_0^{\infty} \big|\partial_{x_3} \overrightarrow{\phi}|_{x_1=r} - \partial_{x_3}\overrightarrow{f}|_{t=r} \big|^2\frac{dr}{r} dx_2 dx_3 <\infty
     \end{cases}
 \end{align}
whenever $\omega:=\{(x_1,x_2)\in\mathbb{R}^2:\ x_1>0\}$ (for details, kindly refer to Section \ref{sec:exis_uniq}).} As a consequence, using the  Theorem \ref{thm:recons_1} in Section \ref{sec:exis_uniq}, there exists a unique solution $\overrightarrow {u}$ of \eqref{hprs} satisfying the following: 
\begin{align*}
\overrightarrow {u}\in  \mathbf{C}^1 ([0,T]; \mathbf{ L}^{2}(\Omega)) \cap  \mathbf{C} ([0,T];  \mathbf{H}^{1}(\Omega))\, ~\quad\text{and}\quad \partial_\nu \overrightarrow {u}\in  \mathbf{L}^{2}(\Sigma),
\end{align*}
where  $\nu$ is the outward unit normal vector to $\partial\Omega$ and $\partial_\nu \overrightarrow {u}:=(\partial_\nu u_1,\partial_\nu u_2,\dots,\partial_\nu u_n)^T$ denotes the normal derivative of vector $\overrightarrow {u}$. 

Based on the existence and uniqueness of the solution to IBVP \eqref{hprs}, we define the input-output (IO) map 
 \begin{align*}
\Lambda_{{q}}: \mathbf{L}  \quad \rightarrow \quad \mathbf{H}^{1} (\Omega)\times\mathbf{L}^{2}(\Omega)\times \LS^2(\Sigma)
\end{align*}
associated to the system of wave equations \eqref{hprs} by 
\begin{equation}\label{input-output map}
\Lambda_{{q}}~\Big(\overrightarrow {\phi},\overrightarrow {\psi},\overrightarrow {f}\Big) := \Big(\overrightarrow {u} \big | _{t=T}, \partial_t \overrightarrow {u}\big |_{t=T}, \partial_\nu \overrightarrow {u}\big |_ {\Sigma}\Big),
\end{equation}
where $\overrightarrow {u}$ is the solution of \eqref{hprs}. Subsequently, we use the notations for its components as follows:
\begin{equation}\label{io map comp}
    \Lambda^{1}_{q}(\overrightarrow {\phi},\overrightarrow {\psi},\overrightarrow {f})=\overrightarrow {u} \big | _{t=T},\quad \Lambda^{2}_{q}(\overrightarrow {\phi},\overrightarrow {\psi},\overrightarrow {f})= \partial_t \overrightarrow {u}\big |_{t=T}, \quad \Lambda^{3}_{q}(\overrightarrow {\phi},\overrightarrow {\psi},\overrightarrow {f})= \partial_\nu \overrightarrow {u}\big |_ {\Sigma}.
\end{equation}

In the present paper, we consider the inverse problem of determining a time-dependent matrix potential $q(t,x)$ from the knowledge of IO map $\displaystyle \Lambda_q$. Our aim is to establish a stability estimate for the determination of $q$ from the given $\Lambda_q$.
We remark that the equation \eqref{hprs} describes the propagation of waves (for instance, electromagnetic waves or sound waves) along the axis of an infinite waveguide under the influence of zeroth-order potential $q(t,x)$. Moreover, equation \eqref{hprs} also appears in the model of transmitting light, signals, and sounds to long distances. 

Determination of ceofficients of wave operator are of great interest in recent times. 
In \cite{Bukhgeim_Klibanov,rakesh1988uniqueness}, the authors proved that the time independent potential can be determined uniquely from the knowledge of Dirichlet to Neumann (DN) map associated with a wave equation. The approach  used in \cite{rakesh1988uniqueness} was initially used   \cite{Sylvester and Uhlmann_calderon problem} for solving the Calder\'on problem.  The results in \cite{Bukhgeim_Klibanov,rakesh1988uniqueness} are further extended to the recovery of time-dependent potential in  \cite{Ramm and Rakesh,
ramm1991inverse} and for recovery of first order derivative  terms together with a potential coefficients are considered in  \cite{eskin2006new,eskin2007inverse,isakov1991inverse,Isakov-completeness,salazar2013determination}. All these works are related to recovery of coefficients either from the knowledge of DN map or IO map measured on full boundary. There have been several studies related to partial data uniqueness results as well. We refer to \cite{Kian_damping,
kian2017unique,
Kian and Oksanen,Krishnan and Vashisth,mishra2021determining} and references therein related to the determination of coefficients for wave operator from partial boundary measurements which are initially studied in \cite{Bukhgeim and Uhlmann} for the inverse problem related to  the Schr\"odinger equation. All the above mentioned works are related to unique determination of coefficients appearing in wave equation from boundary measurements. Next we will mention briefly the works related to deriving the stability estimate for coefficient determination problems for wave equation.
 Stefanov and Uhlmann \cite{stefanov1998stability} obtained the stability estimate from the DN-map considered on the lateral boundary. Under certain assumptions, H{\"o}lder stability estimate and \emph{log-log type} of stability estimate were derived in \cite {isakov1992stability} and \cite{bellassoued2009stability} respectively. With the help of \emph{light ray transform}, the log-log type stability estimate was also obtained in \cite{aicha2015stability} in case of time-dependent scalar potential from the knowledge of IO-map associated with a wave equation. 
For more works related to determining the stability estimates for coefficients appearing in a single wave equation from the boundary measurements, one can refer to \cite{bellassoued2004uniqueness, bellassoued2006lipschitz, bellassoued2008stability,Bellassoued and Ibtissem_Optimal,
Bellassoued and Ibtissem JMAA, Bellassoued and Rassas,kian2016stability,Salazar stability,Soumen} and references therein.
In the case of an infinite cylindrical domain, the literature on stability estimates is limited. For instance, Bellassoued et al. \cite{bellassoued2018inverse} and Kian et al. \cite{kian2015holder} studied an inverse problems for magnetic Schr\"odinger equation and established the H{\"o}lder stability estimates.  In \cite{bellassoued2016inverse}, the authors investigated the inverse problem of determining the time-independent scalar potential of the dynamic Schr{\"o}dinger equation in an infinite cylindrical domain from partial measurement of the solution on the boundary.
For related work in the case of an infinite cylindrical domain, one may refer to \cite{kian2014carleman,salo2006complex} and references therein.

To the best of our knowledge, the study of recovery of time-dependent matrix potential in the literature is very limited. However, in the case of time-independent matrix potential, the authors in \cite{avdonin1992boundary} proved that it can be recovered from the boundary measurements. Furthermore, Eskin and Ralston in \cite{eskin2003inverse} considered the problem of determining zeroth order and first order matrix-valued perturbations in evolution equations and proved uniqueness from the full boundary measurements. Khanfer et al. \cite{khanfer2019inverse} also demonstrated uniqueness in the case of time-dependent matrix potential under certain assumptions on the potential and when the spatial dimension is one.
Considering a bounded domain, Mishra and Vashisth in \cite{mishra2021determining} demonstrated the uniqueness of the time-dependent matrix potential for a wave equation from the partial boundary data. In this paper, we consider the stability estimate in the determination of time-dependent matrix potential over an infinite waveguide from the knowledge of the input-output map. We remark that the estimate is obtained without the assumption of the behaviour of the matrix potential outside a compact set.

The paper is organized as follows. In Section \ref{sec:exis_uniq}, we prove the well-posedness of the forward problem \eqref{hprs}. In Section \ref{sec:main_res}, we state the main results of the stability estimate. In Section \ref{sec:GO_soln}, we develop geometric optics (GO) solutions for our problem. In Section \ref{sec:stab_esti}, we derive an integral identity for our problem. Using the GO solutions and light ray transform along with the integral identity, we demonstrate the stability estimate. Moreover, under certain assumptions in Section \ref{sec:stab_esti}, we extend this result to the same inverse problem with measurements on a bounded subset of the lateral boundary.
\section{Existence and Uniqueness}\label{sec:exis_uniq}
In this section, we establish the existence and uniqueness of the IBVP \eqref{hprs}. More precisely, we prove the following theorem:
\begin{theorem}\label{thm:recons_1}
Let $\displaystyle q(t,x):=((q_{ij}(t,x)))_{1\leq i,j\leq n}$ with  $q_{ij}\in W^{1,\infty}(\Omega_T)$ for all $1\leq i,j\leq n$, be a time-dependent matrix potential. Suppose the initial and boundary data $(\overrightarrow {\phi},~\overrightarrow {\psi}, \overrightarrow {f}) \in \mathbf{L}$. Then, there exists a unique solution
\begin{align*}
\overrightarrow {u}\in  \mathbf{C}^1 ([0,T]; \mathbf{ L}^{2}(\Omega)) \cap  \mathbf{C} ([0,T];  \mathbf{H}^{1}(\Omega))
\end{align*}
of the IBVP \eqref{hprs} such that $\partial_\nu \overrightarrow {u}\in  \mathbf{L}^{2}(\Sigma)$.
Moreover, it satisfies the energy estimate 
\begin{align}\label{wellposed_esti}
\|\partial_\nu \overrightarrow {u}\|_{\mathbf{L}^{2}(\Sigma)}
+\|\overrightarrow {u}\|_{\CS([0,T];\HS^1(\Omega))}
+\|\overrightarrow {u}\|_{\CS^1([0,T];\LS^2(\Omega))}
\leq C \|(\overrightarrow {\phi},
\overrightarrow{\psi},\overrightarrow {f})\|_{\LS},
\end{align}
where the constant $C= C(\Omega, T, \|q\|_{W^{1,\infty}(\Omega_T)})>0$.
\end{theorem}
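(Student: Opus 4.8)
The plan is to build the solution in three stages: first treat the free system (potential $q \equiv 0$) together with the lifting of the nonhomogeneous boundary data, then incorporate the zeroth-order term $q$ by a Duhamel iteration, and finally close the energy estimate \eqref{wellposed_esti} by means of Gronwall's inequality. Since the operator $\square I_n + q$ acts through the d'Alembertian componentwise and couples the unknowns only through the bounded matrix $q$, the natural route is to regard \eqref{hprs} as a single vector-valued wave equation. For the decoupled free problem $\square \overrightarrow{u} = \overrightarrow{0}$ with data $(\overrightarrow{\phi},\overrightarrow{\psi},\overrightarrow{f})$, I would invoke the Lions–Magenes transposition theory (see \cite{lions1968problemes}) adapted to the infinite cylinder $\Omega = \omega \times \R$ following \cite{kian2013stability}. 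The unboundedness in $x_3$ is handled by applying the partial Fourier transform in the axial variable $x_3 \mapsto \xi$; this turns the problem into a one-parameter family of wave equations on the bounded cross-section $\omega$ with the nonnegative spectral parameter $\xi^2$ added to the elliptic part, and the spaces $H^{s}(\partial\Omega)$ and $H^{r,s}$ are designed precisely so that the estimates on $\omega$ are uniform in $\xi$ and can be reassembled by Plancherel's theorem. This stage produces $\overrightarrow{u}_0 \in \CS^1([0,T]; \LS^2(\Omega)) \cap \CS([0,T]; \HS^1(\Omega))$ together with the crucial \emph{hidden regularity} $\partial_\nu \overrightarrow{u}_0 \in \LS^2(\Sigma)$.

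Next, writing $\overrightarrow{u} = \overrightarrow{u}_0 + \overrightarrow{w}$, the remainder $\overrightarrow{w}$ solves $\square \overrightarrow{w} + q\overrightarrow{w} = -q\overrightarrow{u}_0$ with homogeneous initial and boundary data. Because $q \in W^{1,\infty}(\Omega_T)$ and $\overrightarrow{u}_0 \in \CS([0,T];\HS^1(\Omega)) \hookrightarrow \LS^2(\Omega_T)$, the right-hand side lies in $\LS^2(\Omega_T)$, so I would construct $\overrightarrow{w}$ by a Duhamel iteration: set $\overrightarrow{w}_0 = \overrightarrow{0}$ and let $\overrightarrow{w}_{k+1}$ solve $\square \overrightarrow{w}_{k+1} = -q\overrightarrow{w}_k - q\overrightarrow{u}_0$ with zero data. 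Using the standard energy identity for the wave equation with an $\LS^2$ source and homogeneous Dirichlet conditions, each iterate satisfies an energy bound, and the linear structure forces the differences $\overrightarrow{w}_{k+1} - \overrightarrow{w}_k$ to contract on $\CS([0,T];\HS^1(\Omega)) \cap \CS^1([0,T];\LS^2(\Omega))$ for $T$ small, which is then propagated over successive subintervals to cover $[0,T]$. Uniqueness follows from the same energy identity applied to the difference of two solutions (with vanishing data and source), where the term $\int_\Omega q\overrightarrow{w}\cdot\partial_t\overrightarrow{w}\,dx$ is absorbed by Gronwall's inequality.

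The main obstacle is establishing the hidden regularity $\partial_\nu \overrightarrow{u} \in \LS^2(\Sigma)$ uniformly in the axial frequency, since the normal derivative on $\Sigma$ is not controlled by the energy space alone. For this I would use a multiplier (Rellich–Morawetz type) argument: multiplying the equation by $\beta\cdot\nabla\overrightarrow{u}$ for a suitable vector field $\beta$ that equals $\nu$ near $\partial\Omega$ and integrating by parts produces the boundary term $\int_\Sigma |\partial_\nu\overrightarrow{u}|^2\,d\sigma\,dt$, which is then bounded by the interior energy and by the boundary data in the $H^{3/2,3/2}(\Sigma)$-norm together with the weighted quantities appearing in $\|\cdot\|_{\LS}$; the weights $dt/t$ are exactly what is needed to control the contributions near $t = 0$, where the compatibility conditions \eqref{Global compatibility} are active. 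The same multiplier estimate applies to the source problem solved by $\overrightarrow{w}$, so that $\partial_\nu\overrightarrow{w} \in \LS^2(\Sigma)$ as well. Collecting the estimates from all three stages and applying Gronwall's inequality to absorb the zeroth-order term yields \eqref{wellposed_esti} with $C = C(\Omega, T, \|q\|_{W^{1,\infty}(\Omega_T)})$.
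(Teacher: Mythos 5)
Your proposal is correct in outline but follows a genuinely different route from the paper. The paper never solves a wave equation with nonhomogeneous Dirichlet data: its key ingredient is the lifting lemma (Lemma \ref{lem:lifting_sob}), which produces $\overrightarrow{w}\in\HS^{2,2}(\Omega_T)$ with $\overrightarrow{w}=\overrightarrow{f}$, $\partial_\nu\overrightarrow{w}=\overrightarrow{0}$ on $\Sigma$ and the prescribed Cauchy data at $t=0$; writing $\overrightarrow{u}=\overrightarrow{v}+\overrightarrow{w}$, the remainder $\overrightarrow{v}$ solves a source problem with homogeneous initial and boundary data, which is handled in one stroke by the Lions--Magenes variational theory with the potential $q$ absorbed directly into the time-dependent bilinear form (no iteration is needed), and the Rellich-type multiplier identity is applied only to $\overrightarrow{v}$, where $v_i|_{\partial\Omega}=0$ makes every boundary term collapse to $|\partial_\nu v_i|^2$; since $\partial_\nu\overrightarrow{w}=0$ on $\Sigma$, this already yields $\partial_\nu\overrightarrow{u}=\partial_\nu\overrightarrow{v}\in\LS^2(\Sigma)$. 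You instead solve the free equation with the full nonhomogeneous data first (axial Fourier reduction plus per-frequency theory on the cross-section $\omega$), and only then incorporate $q$ by Picard--Duhamel iteration; your iteration, Gronwall absorption, and uniqueness arguments are standard and sound. The trade-off is that the crux of the theorem now sits entirely in your first stage, and there it is asserted rather than proved: plain Lions--Magenes transposition for $\LS^2$ boundary data only yields $\CS([0,T];\LS^2(\Omega))$ solutions, so to reach $\CS([0,T];\HS^1(\Omega))\cap\CS^1([0,T];\LS^2(\Omega))$ together with $\partial_\nu\overrightarrow{u}_0\in\LS^2(\Sigma)$ you need the sharp nonhomogeneous regularity theory (Lasiecka--Lions--Triggiani type), into which the compatibility conditions \eqref{Global compatibility} and the weighted $dt/t$ norms of $\LS$ must be fed, and you must verify that the constants for $\partial_t^2-\Delta_{x'}+\xi^2$ on $\omega$ are uniform in the axial frequency $\xi$ (plausible: the problematic term $\xi^2\int_\omega \hat{u}\,(\gamma_1\cdot\nabla_{x'}\hat{u})\,dx'$ integrates by parts into a boundary contribution controlled after Plancherel by $\|\partial_{x_3}\overrightarrow{f}\|_{\LS^2(\Sigma)}$ and an interior contribution controlled by the conserved energy, but this needs to be written out). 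In effect, your first stage is an alternative proof of exactly what the paper's lifting lemma plus homogeneous-data theory delivers. What your route buys: the free-equation analysis is done once and for all, the potential enters only perturbatively, and the role of the unbounded direction is made completely explicit by the Fourier reduction. What the paper's route buys: only homogeneous Dirichlet solvability is ever required, so the heavy nonhomogeneous boundary theory is replaced by a pure trace/lifting statement, and the multiplier computation stays clean because all boundary data have been flattened away before it is invoked.
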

 In the case of bounded domain $\Omega$, the Theorem \ref{thm:recons_1} has been proved in \cite{mishra2021determining}. However, the same can not be applicable to the unbounded infinite waveguide. Nevertheless, we prove the Theorem \ref{thm:recons_1} by following the approach used in \cite{kian2013stability} where a well-posedness result is proved for a single wave equation. We require the following lemma to prove Theorem \ref{thm:recons_1}
\begin{lemma}\label{lem:lifting_sob}
    For all $(\overrightarrow {\phi},\overrightarrow {\psi},\overrightarrow {f})\in \mathbf{L}$, there exists 
    $$\overrightarrow {w} = \overrightarrow {w}[\overrightarrow {\phi},\overrightarrow {\psi},\overrightarrow {f}] = (w_{1}, w_{2},\dots,w_{n})^T\in \HS^{2,2}(\Omega_T)$$ satisfying 
    \begin{equation}\label{w(f)}
        \begin{cases}
            \overrightarrow {w}(0,\cdot)=\overrightarrow {\phi}, \quad \partial_t \overrightarrow {w}(0,\cdot)=\overrightarrow {\psi}, &\text{in}~\Omega,\\
            \partial_{\nu}\overrightarrow {w}=\overrightarrow {0},\quad \overrightarrow {w}=\overrightarrow {f}, &\text{on}~ \Sigma,
        \end{cases}
    \end{equation}
    and 
    \begin{equation}\label{estimat for w[f]}
        \|\overrightarrow {w}\|_{\HS^{2,2}(\Omega_T)}\leq C \|(\overrightarrow {\phi},\overrightarrow {\psi},\overrightarrow {f})\|_{\mathbf{L}}.
    \end{equation}
\end{lemma}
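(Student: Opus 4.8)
The plan is to reduce the matrix statement to its scalar counterpart and then build the lifting by treating the lateral boundary data and the Cauchy data in succession. First I would observe that neither the conditions \eqref{w(f)} nor the estimate \eqref{estimat for w[f]} involve the potential $q$, and that both are diagonal in the components of $\overrightarrow{w}$. Writing $\overrightarrow{\phi}=(\phi_1,\dots,\phi_n)^T$, $\overrightarrow{\psi}=(\psi_1,\dots,\psi_n)^T$ and $\overrightarrow{f}=(f_1,\dots,f_n)^T$, it therefore suffices to produce, for each index $i$, a scalar lifting $w_i\in H^{2,2}(\Omega_T)$ satisfying the scalar versions of \eqref{w(f)} together with $\|w_i\|_{H^{2,2}(\Omega_T)}\le C\|(\phi_i,\psi_i,f_i)\|_{L}$; setting $\overrightarrow{w}=(w_1,\dots,w_n)^T$ and summing the $n$ inequalities then yields \eqref{estimat for w[f]}. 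This reduces the lemma to the scalar lifting result, which I would obtain by adapting the construction of \cite{kian2013stability} to the cylindrical geometry $\Omega=\omega\times\mathbb{R}$.

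For the scalar construction I would proceed in two steps. In the first step I handle the lateral data: using boundary normal coordinates $(y',s)$ in a collar neighbourhood of $\partial\Omega$, where $s=\mathrm{dist}(\cdot,\partial\Omega)$, and a cutoff $\chi\in C_c^\infty([0,\delta))$ with $\chi(0)=1$ and $\chi'(0)=0$, I would define a first lifting $w^{(1)}$ which in the collar realises the data $f$ on $\Sigma$ while forcing the normal derivative to vanish, so that $w^{(1)}|_\Sigma=f$ and $\partial_\nu w^{(1)}|_\Sigma=-\chi'(0)f=0$. The point is to distribute the regularity so that $w^{(1)}$ lands in the \emph{intersection} space $H^2(0,T;L^2(\Omega))\cap L^2(0,T;H^2(\Omega))$: the tangential derivatives $\partial_\tau^2 f,\partial_{x_3}^2 f$ must be absorbed using the gain of half a spatial derivative that the collar lifting of $H^{3/2}(\partial\Omega)$ data into $H^2(\Omega)$ provides, while the time regularity is supplied by $f\in H^{3/2}(0,T;L^2(\partial\Omega))$; this is exactly an anisotropic trace theorem of Lions--Magenes type adapted to the cylinder (see \cite{lions1968problemes}). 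In the second step I correct the Cauchy data: setting $\Phi:=\phi-w^{(1)}(0,\cdot)$ and $\Psi:=\psi-\partial_t w^{(1)}(0,\cdot)$, the compatibility condition $\phi|_{\partial\Omega}=f|_{t=0}$ built into $L$ forces $\Phi|_{\partial\Omega}=0$, and I would lift $(\Phi,\Psi)$ by a function $w^{(2)}$ supported near $t=0$, with $w^{(2)}(0,\cdot)=\Phi$, $\partial_t w^{(2)}(0,\cdot)=\Psi$, and $w^{(2)}=\partial_\nu w^{(2)}=0$ on $\Sigma$. The desired lifting is then $w_i=w^{(1)}+w^{(2)}$.

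The step I expect to be the main obstacle is controlling the behaviour at the space--time edge $\{t=0\}\times\partial\Omega$, where the lateral and initial data meet. It is precisely here that the weighted membership $\partial_t f,\partial_\tau f,\partial_{x_3}f\in L^2(\Sigma;d\sigma(x)\,dt/t)$ in the definition of $L$ is needed: the weight $dt/t$ is tailored to a Hardy inequality, which lets one bound the singular near-corner contributions of $\partial_t^2 w^{(1)}$ and of the second-order spatial derivatives by the $L$-norm of the data, so that $w^{(1)}\in H^{2,2}(\Omega_T)$ with the stated estimate rather than merely in a space of lower regularity. Finally, since $\Omega=\omega\times\mathbb{R}$ is translation invariant in $x_3$ and all the norms entering $L$ and $H^{2,2}(\Omega_T)$ are $L^2$-based in $x_3$, the entire construction can be carried out uniformly in $x_3$ (equivalently, after a partial Fourier transform in the variable $x_3$), so that the constants do not depend on the unbounded variable and the $L^2(\mathbb{R}_{x_3})$-integration closes the estimate.
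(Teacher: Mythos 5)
Your componentwise reduction is exactly the paper's first move, but from there you diverge: the paper constructs nothing explicitly, instead invoking the Lions--Magenes trace theorem on the half-space model (the map $S(w)=(w|_{t=0},\partial_t w|_{t=0},w|_{x_1=0},\partial_{x_1}w|_{x_1=0})$ from $H^{2,2}$ is continuous and onto the space $Z_1$ characterized by the global compatibility conditions), restricting $S$ to the closed subspace $Z_3=\{w:\partial_{x_1}w|_{x_1=0}=0\}$, extracting a bounded right inverse $S_2:Z_2\to Z_3$, and then passing to a general cross-section $\omega$ by boundary charts and a partition of unity. Your alternative --- an explicit collar lifting of the lateral data followed by a correction of the Cauchy data --- is a legitimate architecture in principle, but as written it has a genuine gap at precisely the point where the lemma is nontrivial.

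The gap is in your Step 2. For the corrected data $(\Phi,\Psi)=\bigl(\phi-w^{(1)}(0,\cdot),\,\psi-\partial_t w^{(1)}(0,\cdot)\bigr)$ to admit a lifting $w^{(2)}\in H^{2,2}(\Omega_T)$ with $w^{(2)}|_\Sigma=\partial_\nu w^{(2)}|_\Sigma=0$, the zeroth-order condition $\Phi|_{\partial\Omega}=0$ that you verify is \emph{not} sufficient. The range of the $H^{2,2}$ trace map (this is what $Z_1$ encodes) is cut out by additional first-order integral conditions at the corner $\{t=0\}\times\partial\Omega$: in the half-space model one needs
$\int_0^\infty\int_{\R}\int_{\R}\bigl|\partial_{x_2}\Phi|_{x_1=r}\bigr|^2\frac{dr}{r}\,dx_2\,dx_3<\infty$
and the analogue for $\partial_{x_3}$, i.e.\ $\Phi$ must lie in an $H^{3/2}_{00}$-type space, which at this regularity is strictly stronger than having vanishing trace. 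Verifying this for $\Phi$ requires two things your proposal does not supply: (i) the coupled conditions \eqref{Global compatibility} relating $\partial_\tau\phi,\partial_{x_3}\phi$ to $\partial_\tau f,\partial_{x_3}f$ in weighted $L^2(dr/r)$ --- conditions your argument never invokes, even though they are \emph{necessary} for the lifting to exist at all, so any proof that ignores them cannot be complete; and (ii) a quantitative statement that your particular collar lifting $w^{(1)}$ has its $t=0$ trace matching $f$ near the corner in the same weighted sense, so that the defect $\Phi$ inherits the $H^{3/2}_{00}$-type decay. Your appeal to Hardy's inequality addresses a different issue (the $H^{2,2}$ bound on $w^{(1)}$ itself), not the compatibility of the corrected data. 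Since every other step of the lemma is routine trace theory, this unverified corner compatibility is essentially the entire content of the proof; the paper's detour through $Z_1$, $Z_2$, $Z_3$ and a right inverse exists precisely to avoid having to re-derive it by hand.
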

The Lemma \ref{lem:lifting_sob} can be proved if we show that for every $i=1,2,\dots,n$ and for each $(\phi_i,\psi_i,f_i)\in L$, there exists $w_i\in H^{2,2}(\Omega_T)$ satisfying 
    \begin{equation}\label{wi(f)}
         \begin{cases}
            w_i(0,x)=\phi_i, \quad \partial_t  w_i(0,x)=\psi_i, &\text{in}~\Omega,\\
            \partial_{\nu} w_i=0,\quad  w_i= f_i, &\text{on}~ \Sigma,
        \end{cases}
    \end{equation}
     and 
    \begin{equation}\label{wi(f) estimate}
        \| w_i\|_{H^{2,2}(\Omega_T)}\leq C \|(\phi_i,\psi_i,f_i)\|_{L}.
    \end{equation}
{ Our aim is to show Lemma \ref{lem:lifting_sob} related to the lifting of Sobolev spaces using the arguments related to local coordinates as used in  \cite[Chapter 1]{lions2012non} (see pages $38-40$) for proving the trace theorem for $L^2$-Sobolev spaces. 
In this regard, we first prove the result when  the domain $\omega$ and its boundary $\partial\omega$ are given by $\R^2_{+}=\{(x_1,x_2): x_1>0\}$ and $\R$ respectively.} Subsequently, $\Omega$ and $\partial\Omega$ will be replaced by $\R^3_{+}$ and $\R^2$ respectively. We assume $T=\infty$. Finally, $\Omega_T$ and $\Sigma$ can be replaced by $((0,\infty)\times\R^3_{+})$ and $((0,\infty)\times\R^2)$ respectively. Our goal is to ensure the existence of $w_i$ satisfying \eqref{wi(f)} and \eqref{wi(f) estimate}. For convenience, we omit the subscript $i$ for the following analysis. 

We consider the space $Z_1$ consisting of $(u_0,u_1,g_0,g_1)$ such that
\begin{align}
    u_0\in H^{\frac{3}{2}}(\R^3_{+}), \quad u_1\in H^{\frac{1}{2}}(\R^3_{+}), \quad g_0\in H^{\frac{3}{2},\frac{3}{2}}((0,\infty)\times\R^2), \quad g_1\in H^{\frac{1}{2},\frac{1}{2}}((0,\infty)\times\R^2)
\end{align}
with the global compatibility conditions 
 \begin{align}
     \begin{cases}
         & u_0|_{\partial\Omega} = g_0|_{t=0},\\
        & \int_{\R}\int_{\R}\int_0^{\infty} \big|\partial_{x_2} u_0|_{x_1=r} - \partial_{x_2}g_0|_{t=r} \big|^2\frac{dr}{r} dx_2 dx_3 <\infty,\\
        & \int_{\R}\int_{\R}\int_0^{\infty} \big|\partial_{x_3} u_0|_{x_1=r} - \partial_{x_3}g_0|_{t=r} \big|^2\frac{dr}{r} dx_2 dx_3 <\infty.
     \end{cases}
 \end{align}
Now let us consider the Hilbert space
{\begin{align*}
   \ Z_2 := \Big\{(\phi,\psi,f): \phi|_{\partial\Omega} = f|_{t=0},&~ t^{-\frac{1}{2}}\partial_t f, ~ t^{-\frac{1}{2}}\partial_{x_2}\phi|_{x_1=t},\\
   & t^{-\frac{1}{2}}\left(\nabla_{(x_2,x_3)}f - \nabla_{(x_2,x_3)}\phi\big|_{x_1=t}\right) \in L^2((0,\infty)\times\R^2)\Big\}
 \end{align*}
 and the associated norm is defined by
 \begin{align*}
     \|(\phi,\psi,f)\|^2_{Z_2} := & \|\phi\|^2_{H^{\frac{3}{2}}(\R^3_{+})} + \|\psi\|^2_{H^{\frac{1}{2}}(\R^3_{+})} + \|f\|^2_{H^{\frac{3}{2},\frac{3}{2}}((0,\infty)\times\R^2)} + \|t^{-\frac{1}{2}}\partial_t f\|^2_{L^2((0,\infty)\times\R^2)}\\
     &  + \|t^{-\frac{1}{2}}\left(\nabla_{(x_2,x_3)}f - \nabla_{(x_2,x_3)}\phi|_{x_1=t}\right)\|^2_{L^2((0,\infty)\times\R^2)}.
 \end{align*}}
 With the above set-up, we shall prove Lemma \ref{lem:lifting_sob}. 
\begin{proof}[Proof of Lemma \ref{lem:lifting_sob}]
    Let us define an operator $S: H^{2,2}((0,\infty)\times\R^3_{+})\rightarrow Z_1$ as
    \begin{align*}
        S(w):= (w|_{t=0},\partial_t w|_{t=0}, w|_{x_1=0}, \partial_{x_1}w|_{x_1=0}).
    \end{align*}
Then the linear operator $S$ is continuous and onto (refer to Theorem $2.3$ on page 18 in \cite[Chapter 4]{lions1968problemes}). Afterwards, we show that for any $(\phi,\psi,f)\in Z_2$, there exists $w\in H^{2,2}((0,\infty)\times\R^3_{+})$ such that 
\begin{align}\label{temp:lift_1}
    (w|_{t=0},\partial_t w|_{t=0}, w|_{x_1=0}, \partial_{x_1}w|_{x_1=0}) = (\phi,\psi,f,0)
\end{align}
and the following estimate holds
 \begin{align}\label{temp:lift_2}
     \|w\|_{H^{2,2}((0,\infty)\times\R^3_{+})} \leq C \|(\phi,\psi,f)\|^2_{Z_2}.
 \end{align}
Let $(u_0,u_1,g_0,g_1)=(\phi,\psi,f,0)$.   Then it is observed that $(u_0,u_1,g_0,g_1)\in Z_1$ if and only if $(\phi,\psi,f)\in Z_2$. Furthermore, we consider the space 
 \begin{align*}
     Z_3 := \{w\in H^{2,2}((0,\infty)\times\R^3_{+}): \partial_{x_1}w|_{x_1=0}=0\}.
 \end{align*}
We observe that the restriction of the operator $S$ to $Z_3$ is continuous and onto from $Z_3$ to {$Z_2\times\{0\}$}. As a consequence, we deduce the operator 
 \begin{align*}
     S_1: w\mapsto (w|_{t=0},\partial_t w|_{t=0}, w|_{x_1=0})
 \end{align*}
is continuous and onto from the space $Z_3$ to the Hilbert space $Z_2$. Hence there exists a bounded operator $S_2: Z_2\rightarrow Z_3$ such that $S_1S_2=I$. For $w=S_2(\phi,\psi,f)$, \eqref{temp:lift_1} holds and the estimate \eqref{temp:lift_2} is satisfied. { This completes the proof when the domain $\omega$ and its boundary $\partial\omega$ are given by $\R^2_{+}=\{(x_1,x_2): x_1>0\}$ and $\R$ respectively. Now since $\omega\subset\mathbb{R}^2$ be an open set with smooth boundary therefore there exist $\{U_j\}_{1\leq j\leq  N}$ a  family of open subsets of $\mathbb{R}^2$ such that $\displaystyle \partial\omega\subset\cup_{1\leq j \leq N}U_j$ and smooth maps $\displaystyle \{\eta_j\}_{1\leq j\leq N}$ such that for each $1\leq j\leq N$, the map $\displaystyle \eta_j: U_j\rightarrow \mathcal{Q}:=\{(y_1,y_2):\ \lvert y_1\rvert <1,\ \&\ \lvert y_2\rvert<1\}$ are bijective and satisfies the  following properties
\begin{align*}
    \begin{aligned}
        \eta_j(U_j\cap \omega)=\{(y_1,y_2)\in\mathcal{Q}:\ y_1>0\}:=\mathcal{Q}_j^{*},\ \eta_j(U_j\cap \partial\omega)=\{ (y_1,y_2)\in\mathcal{Q}:\ y_1=0\}. 
    \end{aligned}
\end{align*}
Since the composition of a Sobolev function with a smooth diffeomorphism is again a Sobolev function in the respective domain, therefore we have the composition function $f\circ\eta_j^{-1}$ lies in a Sobolev space whenever $f$ is a Sobolev function.  
Finally after choosing an open set $U_0\subset\mathbb{R}^2$ such that $\displaystyle \overline{\omega}\subset\cup_{j=0}^{N}U_j$ and  a partition of unity  subordinate to the cover $\{U_j\}_{0\leq j\leq N}$, we can replace the space  $L$  along with the global compatibility conditions given in \eqref{Global compatibility}  by $Z_2$ 
(see \cite[Proposition 3.3]{lions1968problemes} for details)}. 
Subsequently, we obtain that $w_i\in H^{2,2}(\Omega_T)$ such that \eqref{wi(f)} and \eqref{wi(f) estimate} are satisfied.
Hence the result follows.
\end{proof}

\begin{proof}[Proof of Theorem \ref{thm:recons_1}]
Thanks to Lemma \ref{lem:lifting_sob}, we split $\overrightarrow {u}$ into two parts $\overrightarrow {u}=\overrightarrow {v}+\overrightarrow {w}$ with $\overrightarrow {w}=\overrightarrow {w}[\overrightarrow {\phi},\overrightarrow {\psi},\overrightarrow {f}]\in \mathbf{H}^{2,2}(\Omega_T)$ satisfying \eqref{w(f)}-\eqref{estimat for w[f]} and $\overrightarrow {v}$ satisfies the following IBVP:
\begin{equation}\label{equtn for v}
    \begin{cases}
        \partial_t^2 \overrightarrow {v}(t,x) - \Delta \overrightarrow {v}(t,x) + q(t,x) \overrightarrow {v}(t,x) = \overrightarrow {F}(t,x), &\quad (t,x)\in \Omega_T,\\
    \overrightarrow {v}(0,x)=\overrightarrow {0},~~ \partial_t \overrightarrow {v}(0,x) = \overrightarrow {0}, &\quad x\in\Omega,\\
    \overrightarrow {v}(t,x) = \overrightarrow {0}, &\quad  (t,x)\in  \Sigma  
    \end{cases}
\end{equation}
 with $\overrightarrow {F}(t,x)=-\big(\partial_t^2 \overrightarrow {w}(t,x) - \Delta \overrightarrow {w}(t,x) + q(t,x) \overrightarrow {w}(t,x)\big)$.
Since $q\in W^{1,\infty}(\Omega_T)$ and $\overrightarrow{w}\in \HS^{2,2}(\Omega_T)$, we have $\overrightarrow {F}(t,x)\in \mathbf{L}^2(\Omega_T)$. 
To demonstrate the well-posedness of IBVP \eqref{equtn for v}, we follow the arguments from \cite{lions2012non, lions1968problemes, mishra2021determining}. Let us define a time-dependent bilinear form
 \begin{align}\label{temp:wellposed_1}
     a(t;\overrightarrow {v},\overrightarrow {w}): = \int_{\Omega} \nabla \overrightarrow {v}(x)\cdot \overline{\nabla \overrightarrow {w}(x)}~dx + \int_{\Omega} q(t,x)\overrightarrow {v}(x)\cdot\overline{\overrightarrow {w}(x)}~dx, \quad \overrightarrow {v},\overrightarrow {w}\in \HS^1_0(\Omega).
 \end{align}
Since $q\in W^{1,\infty}(\Omega_T)$ and $\overrightarrow {v}, \overrightarrow {w}$ are time-independent, we have the following bound
 \begin{align}\label{temp:wellposed_2}
     |a(t;\overrightarrow {v},\overrightarrow {w})| \leq \alpha\|\overrightarrow {v}\|_{\HS^1_0(\Omega)}
     \|\overrightarrow {w}\|_{\HS^1_0(\Omega)},
 \end{align}
 where we have used the Cauchy-Schwartz inequality and the positive constant $\alpha$ is independent of $\overrightarrow {v}$ and $\overrightarrow {w}$. Furthermore, we observe that
\begin{align*}
    |a(t;\overrightarrow {w},\overrightarrow {w})| & = \Big|\int_{\Omega} \Big(|\nabla \overrightarrow {w}(x)|^2 + q(t,x)\overrightarrow {w}(x)\cdot\overline{\overrightarrow {w}(x)}\Big)~dx \Big|\\
    & \geq \|\nabla \overrightarrow {w}\|^2_{\LS^2(\Omega)} - \|q\|_{L^{\infty}(\Omega_T)}\|\overrightarrow {w}\|^2_{\LS^2(\Omega)}
\end{align*}
and consequently, we obtain the following estimate
\begin{align}\label{temp:wellposed_3}
    \beta \|\overrightarrow {w}\|^2_{\HS^1(\Omega)} \leq |a(t;\overrightarrow {w},\overrightarrow {w})|+ \gamma \|\overrightarrow {w}\|^2_{\LS^2(\Omega)}, \quad \overrightarrow {w}\in \HS^1_0(\Omega),~t\in (0,T),
\end{align}
for some constants $\beta>0$ and $\gamma=1+\|q\|_{L^{\infty}(\Omega_T)}$.
From \eqref{temp:wellposed_1}-\eqref{temp:wellposed_3}, we conclude that the map $t\mapsto a(t;\overrightarrow {v},\overrightarrow {w})$ is a continuous bilinear for all $\overrightarrow {v},\overrightarrow {w}\in \HS^1_0(\Omega)$ and $t\in (0,T)$. Since the principal part of $a(t;\cdot,\cdot)$ is anti-symmetric,
applying \cite[Theorem 8.1]{lions2012non} we ensure that there exists
a unique solution $\overrightarrow {v}\in \CS(0,T;\HS^1(\Omega))\cap \CS^1(0,T;\LS^2(\Omega))$ of \eqref{equtn for v}. In addition, there holds
\begin{align}\label{temp:lift_esti_v}
    \|\overrightarrow {v}\|_{\CS([0,T];\HS^1(\Omega))} + \|\overrightarrow {v}\|_{\CS^1([0,T];\LS^2(\Omega))}
    \leq C \|(\overrightarrow {\phi},\overrightarrow {\psi},\overrightarrow {f})\|_{\LS}.
\end{align}
From the Lemma \ref{lem:lifting_sob}, there exists $\overrightarrow{w}\in \HS^{2,2}(\Omega_T)$ such that \eqref{w(f)} holds. Moreover, using the inclusion (\cite[Chapter 1, Theorem 3.1]{lions2012non})
\begin{align*}
    \HS^{2,2}(\Omega_T) \xhookrightarrow{} \CS(0,T;\HS^{\frac{3}{2}}(\Omega)) \cap \CS^1(0,T;\HS^{\frac{1}{2}}(\Omega))\xhookrightarrow{} \CS(0,T;\HS^{1}(\Omega)) \cap \CS^1(0,T;\LS^{2}(\Omega)),
\end{align*}
we have the following estimate
\begin{align}\label{temp:lift_esti_w}
    \|\overrightarrow{w}\|_{\CS(0,T;\HS^1(\Omega))} + \|\overrightarrow{w}\|_{\CS^1(0,T;\LS^2(\Omega))} \leq C\|\overrightarrow{w}\|_{\HS^{2,2}(\Omega_T)} \leq C \|(\overrightarrow {\phi},\overrightarrow {\psi},\overrightarrow {f})\|_{\LS}.
\end{align}
Combining the estimates \eqref{temp:lift_esti_v} and \eqref{temp:lift_esti_w} together, we have 
 \begin{align*}
     \overrightarrow{u} \in \CS(0,T;\HS^{1}(\Omega)) \cap \CS^1(0,T;\LS^{2}(\Omega))
 \end{align*}
along with the estimate
\begin{align}
\|\overrightarrow{u}\|_{\CS(0,T;\HS^1(\Omega))} + \|\overrightarrow{u}\|_{\CS^1(0,T;\LS^2(\Omega))} \leq C \|(\overrightarrow {\phi},\overrightarrow {\psi},\overrightarrow {f})\|_{\LS}.
\end{align}

Afterwards, we show that $\partial_{\nu}\overrightarrow {v}\in \mathbf{L}^2(\Sigma)$ and
\begin{align}\label{temp:esti_v_nrml}
   \|\partial_{\nu}\overrightarrow {v}\|_{\LS^2(\Sigma)} 
    \leq C\|(\overrightarrow {\phi},\overrightarrow {\psi},\overrightarrow {f})\|_{\LS}.
\end{align}
To do this, for each $i\in \{1,2,\dots,n\}$, let us define $\overrightarrow {K}^{(i)}=(\delta_{ir})_{r=1}^n$. As a consequence, we have
\begin{align}\label{well_posed_temp_1}
\begin{aligned}
    \begin{cases}
        \left(\partial_t^2v_i-\Delta v_i+\sum_{j=1}^{n} q_{ij}v_j\right)(t,x)= \overrightarrow {F}(t,x)\cdot \overrightarrow {K}^{(i)},\ \    (t,x)\in \Omega_T,\\
        v_i(0,x)=0,~~ \partial_t v_i(0,x) =0,  \qquad x\in\Omega,\\
    v_i(t,x) =0,\qquad   (t,x)\in\Sigma,
\end{cases}
\end{aligned}
\end{align}
for any $1\leq i\leq n$.
Let $\nu_1$ be the unit outward normal vector to $\partial\omega$ and let us consider the function $\gamma\in{C}^\infty(\overline{\Omega},\mathbb{R}^3)$, which is defined by $$\gamma(x',x_3)=(\gamma_1(x'),0),\qquad x'\in \omega,~ x_3\in\mathbb{R},$$
where $\gamma_1\in{C}^\infty(\overline{\omega},\mathbb{R}^2)$ satisfies $\gamma_1=\nu_1$\, on $\partial\omega$. Hence we obtain 
$\gamma=\nu$ on $\partial\Omega$. 
We multiply the equation \eqref{well_posed_temp_1} by $\gamma\cdot\nabla v_i$ and integrate over $\Omega_T$ to obtain 
\begin{align}
    \mathcal{E}_1 + \mathcal{E}_2:&=\int_{\Omega_T} \big(\partial_t^2v_i  -\Delta v_i\big)\big(\gamma\cdot\nabla v_i\big)\,dx\,dt  \nonumber\\
        &= - \int_{\Omega_T}\big(\gamma\cdot\nabla v_i\big)\sum_{j=1}^{n} q_{ij}v_j\,dx\,dt +\int_{\Omega_T}\big(\overrightarrow {F}\cdot \overrightarrow {K}^{(i)}\big)\big(\gamma\cdot\nabla v_i\big)\,dx\,dt \nonumber\\
        & \leq  C \bigg(\int_{\Omega_T} |\overrightarrow {v}|^2\,dx\,dt + \int_{\Omega_T} |\nabla v_i|^2\,dx\,dt +\int_{\Omega_T} {|\big(\overrightarrow {F}\cdot \overrightarrow {K}^{(i)}\big)|^2}\,dx\,dt \bigg) \nonumber\\
        & \leq C \Big(\|\overrightarrow {w}\|^2_{\mathbf{H}^{2,2}(\Omega_T)}+\int^T_0(\|\overrightarrow {v}\|^2_{\mathbf{H}^1_0(\Omega)}+\|v_i\|^2_{H^1_0(\Omega)})\,dt\Big) \nonumber\\
        & \leq C \Big(\|(\overrightarrow {\phi},\overrightarrow {\psi},\overrightarrow {f})\|^2_{\LS}+\|\overrightarrow {v}\|^2_{\mathcal{C}([0,T];\mathbf{H}^1_0(\Omega))} \Big) \label{energy estimate component wise}.
\end{align}
 Using the integration by parts, we get
\begin{equation*}
    \begin{split}
        \mathcal{E}_1 & = \int_{\Omega_T} \partial_t^2v_i\big(\gamma\cdot\nabla v_i\big)\,dx\,dt \\
         & = -\int_{\Omega_T}\partial_t v_i\big(\gamma\cdot\nabla \partial_tv_i\big)\,dx\,dt+\int_{\Omega}\partial_t v_i(T,x)\big(\gamma\cdot\nabla v_i(T,x)\big)\,dx\\
         & \quad -\int_{\Omega}\partial_t v_i(0,x)\big(\gamma\cdot\nabla v_i(0,x)\big)\,dx\,dt\\
         &=\int_{\Omega}\partial_t v_i(T,x)\big(\gamma\cdot\nabla v_i(T,x)\big)\,dx-\frac{1}{2}\int_{\Omega_T}\big(\gamma\cdot\nabla (\partial_tv_i)^2\big)\,dx\,dt.
    \end{split}
\end{equation*}
With the help of Green's formula in $x'\in\omega$ and the identity $\gamma(x)\cdot\nabla (\partial_tv_i)^2=\gamma_1(x')\cdot\nabla_{x'} (\partial_tv_i)^2$, we obtain
\begin{equation*}
    \begin{split}
        \int^T_0\int_{\Omega} & \gamma\cdot\nabla (\partial_tv_i)^2\,dx\,dt =\int^T_0\int_{\mathbb{R}}\int_{\omega}\gamma_1\cdot\nabla_{x'} (\partial_tv_i)^2\,dx'\,dx_3\,dt\\
        &=-\int^T_0\int_{\mathbb{R}}\int_{\omega} (\nabla\cdot\gamma )(\partial_tv_i)^2\,dx'\,dx_3\,dt +\int^T_0\int_{\mathbb{R}}\int_{\partial\omega}(\partial_tv_i)^2 \,dS_{x'}\,dx_3\,dt\\
        &=-\int_{\Omega_T}(\nabla\cdot\gamma )(\partial_tv_i)^2\,dx'\,dx_3\,dt,
    \end{split}
\end{equation*}
where we have used the fact that $v_i\big|_{\partial\Omega}=0$. Hence we end up with
\begin{equation}\label{Equality involving Backward time}
   \begin{split}
        \mathcal{E}_1= \int_{\Omega}\partial_t v_i(T,x) \big(\gamma\cdot\nabla v_i(T,x)\big)\,dx +\frac{1}{2}\int_{\Omega_T}(\nabla\cdot\gamma )(\partial_tv_i)^2\,dx\,dt.
   \end{split}
\end{equation}
Next, we focus on $\mathcal{E}_2$. We observe that
\begin{equation*}
   \mathcal{E}_2= -\int_{\Omega_T} \Delta v_i \big(\gamma\cdot\nabla v_i\big)\,dx\,dt = 
   -\int_{\Omega_T} \Delta_{x'} v_i \big(\gamma\cdot\nabla v_i\big)\,dx\,dt 
   -\int_{\Omega_T} \partial^2_{x_3}v_i \big(\gamma\cdot\nabla v_i\big)\,dx\,dt.
\end{equation*}
Again applying Green's formula in $x'\in\omega$, we find
\begin{equation}
 \begin{split}
  -\int_{\Omega_T} \Delta_{x'} v_i \big(\gamma\cdot\nabla v_i\big)\,dx\,dt & = -\int^T_0\int_{\mathbb{R}}\int_{\partial\omega}|\partial_{\nu}v_i|^2\,dS_{x'}\,dx_3\,dt \nonumber\\
&\quad+\int^T_0\int_{\mathbb{R}}\int_{\omega}\nabla_{x'}v_i\cdot\nabla_{x'}\big(\gamma\cdot\nabla v_i\big)\,dx\,dt,
    \end{split}
\end{equation}
and using integration by parts in $x_3\in\mathbb{R}$, we obtain
\begin{equation*}
  -\int_{\Omega_T} \partial^2_{x_3}v_i \big(\gamma\cdot\nabla v_i\big)\,dx\,dt = \int_{\Omega_T} \partial_{x_3}v_i \partial_{x_3}\big(\gamma\cdot\nabla v_i\big)\,dx\,dt. 
\end{equation*}
As a consequence, $\mathcal{E}_2$ reduces to
\begin{equation*}
     \mathcal{E}_2 = -\int_{\Sigma}|\partial_{\nu}v_i|^2\,dS_{x'}\,dx_3\,dt
     +\int_{\Omega_T}\nabla_{x}v_i\cdot\nabla_{x}\big(\gamma\cdot\nabla v_i\big)\,dx\,dt.
\end{equation*}
Furthermore, we use the following identity for any $\gamma=(\gamma_1,\gamma_2,\gamma_3)^T\in \R^3$ and $H=(\partial_{x_j}\gamma_i)_{1\leq i,\,j\leq 3}$,
$$\nabla_{x}v_i\cdot\nabla_{x}\big(\gamma\cdot\nabla v_i\big)=\big(H\nabla v_i\big)\cdot\nabla v_i+\frac{1}{2}\gamma\cdot\nabla(|\nabla v_i|^2).$$ 
Subsequently, $\mathcal{E}_2$ transforms into
\begin{equation}\label{integral identity over Omega and Partial omega}
     \begin{split}
         \mathcal{E}_2 & = -\int_{\Sigma}|\partial_{\nu}v_i|^2\,dS_{x'}\,dx_3\,dt+\int_{\Omega_T}\big(H\nabla v_i\big)\cdot\nabla v_i\,dx\,dt
         +\frac{1}{2}\int_{\Omega_T}\gamma\cdot\nabla(|\nabla v_i|^2)\,dx\,dt.
     \end{split}
\end{equation}
Again, applying the Green's formula in $x'\in\omega$ implies
\begin{equation}\label{Green's formula in x"}
    \begin{split}
        \int_{\omega}\gamma\cdot\nabla(|\nabla v_i|^2)\,dx'&=\int_{\omega}\gamma_1\cdot\nabla_{x'}(|\nabla v_i|^2)\,dx'\\
        &= \int_{\partial\omega}(|\nabla v_i|^2)\,dS_{x'}-\int_{\omega}(\nabla\cdot\gamma)(|\nabla v_i|^2)\,dx'.
    \end{split}
\end{equation}
Using the fact that $v_i\big|_{\partial\Omega}=0$, we have $|\nabla v_i|^2=|\partial_{\nu}v_i|^2$ on $\Sigma$. It follows from \eqref{Green's formula in x"} that
  \begin{equation}\label{Green func integral extension}
      \begin{split}
        \int_{\Omega_T}\gamma\cdot\nabla(|\nabla v_i|^2)\,dx\,dt=\int_{\Sigma}|\partial_{\nu}v_i|^2\,dS_{x'}\,dx_3\,dt-\int_{\Omega_T}(\nabla\cdot\gamma)(|\nabla v_i|^2)\,dx\, dt.
        \end{split}
  \end{equation}
  Substituting \eqref{Green func integral extension} in \eqref{integral identity over Omega and Partial omega} we get,
  \begin{equation}\label{Equality with normal derivative}
      \begin{split}
          \mathcal{E}_2&= -\frac{1}{2}\int_{\Sigma}|\partial_{\nu}v_i|^2 \,dS_{x'}\,dx_3\,dt +\int_{\Omega_T}\big(H\nabla v_i\big)\cdot\nabla v_i\,dx\,dt
           -\frac{1}{2}\int_{\Omega_T}(\nabla\cdot\gamma)(|\nabla v_i|^2)\,dx\, dt.
      \end{split}
  \end{equation}
  Combining \eqref{energy estimate component wise}, \eqref{Equality involving Backward time} and \eqref{Equality with normal derivative} together, we get
  \begin{equation*}
      \begin{split}
     \int_{\Sigma}|\partial_{\nu}v_i|^2\,dS_{x'} \,dx_3\,dt
        & =2 \int_{\Omega_T}\big(H\nabla v_i\big)\cdot\nabla v_i\,dx\,dt - \int_{\Omega_T}(\nabla\cdot\gamma)(|\nabla v_i|^2)\,dx\, dt\\
        &\quad + 2\int_{\Omega}\partial_t v_i(T,x)\big(\gamma\cdot\nabla v_i(T,x)\big)\,dx +\int_{\Omega_T}(\nabla\cdot\gamma )(\partial_tv_i)^2\,dx'\,dx_3\,dt\\
        & \quad + \int_{\Omega_T}\big(\gamma\cdot\nabla v_i\big)\sum_{j=1}^{n} q_{ij}v_j\,dx\,dt -\int_{\Omega_T}\big(\overrightarrow {F}\cdot \overrightarrow {K}^{(i)}\big)\big(\gamma\cdot\nabla v_i\big)\,dx\,dt 
      \end{split}
  \end{equation*}
which further implies that
\begin{equation*}
\begin{split}
\|\partial_{\nu}v_i\|^2_{L^2(\Sigma)}\leq C \Big( \|(\overrightarrow {\phi},\overrightarrow {\psi},\overrightarrow {f})\|^2_{\LS}+\|\overrightarrow {v}\|^2_{\CS([0,T];\HS^1_0(\Omega))}+\|\overrightarrow {v}\|^2_{\CS^1([0,T];\LS^2(\Omega))}\Big).
\end{split}
\end{equation*}
Since the above estimate holds for each $i=1,2,\dots,n$, we finally end up with
\begin{equation*}
      \begin{split}
          \|\partial_\nu \overrightarrow {v}\|^2_{\mathbf{L}^{2}(\Sigma)}\leq C \Big( \|(\overrightarrow {\phi},\overrightarrow {\psi},\overrightarrow {f})\|^2_{\LS}+\|\overrightarrow {v}\|^2_{\CS([0,T];\HS^1_0(\Omega))}+\|\overrightarrow {v}\|^2_{\CS^1([0,T];\LS^2(\Omega))}\Big)
      \end{split}
  \end{equation*}
and consequently, the estimate \eqref{temp:esti_v_nrml} is established.
Moreover, using the fact that $ \|\partial_{\nu}\overrightarrow {u}\|_{\LS^2(\Sigma)}=  \|\partial_{\nu}\overrightarrow {v}\|_{\LS^2(\Sigma)}$, we have
\begin{align*}
    \|\partial_\nu \overrightarrow {u}\|_{\mathbf{L}^{2}(\Sigma)}\leq C \|(\overrightarrow {\phi},\overrightarrow {\psi},\overrightarrow {f})\|_{\LS}.
\end{align*}
Hence the result follows.
\end{proof}
\begin{remark}
Incorporating \eqref{wellposed_esti}, we find that the input-output map $\Lambda_{{q}}$ defined in \eqref{input-output map} is continuous from $\mathbf{L}$ to $\mathbf{H}^{1} (\Omega)\times\mathbf{L}^{2}(\Omega)\times \LS^2(\Sigma)$.
\end{remark}
\section{statement of the main results}\label{sec:main_res}
The main results of the paper are as follows:
\begin{theorem}\label{main result}
Let $\displaystyle q^{(k)}:=((q^{(k)}_{ij}))_{1\leq i,j\leq n}$ for $k=1,2$  be two sets of matrix potentials with  with $q^{(1)}_{ij},q^{(2)}_{ij}\in W^{1,\infty}(\Omega_T)$ for all $1\leq i,j\leq n$ and $\|q^{(k)}\|\leq M,$ for $k=1,2$.
Let $\overrightarrow {u}^{(k)}$ be the solution of \eqref{hprs} corresponding to the matrix potential $q= q^{(k)}$ and $\Lambda_{q^{(k)}}$ be the given IO map defined by \eqref{input-output map} corresponding to $\overrightarrow {u}^{(k)}$. Then, the following stability estimate holds for $T>Diam(\omega)$,
\begin{align}\label{main_stability_estimate_for_q}
    \big\|q^{(1)}-q^{(2)} \big\|^{2/\mu}_{L^{\infty}(\Omega_T)}\leq C \bigg(\|\Lambda_{q^{(1)}} - \Lambda_{q^{(2)}}\|^{\mu/2}+\Big|\log\|\Lambda_{q^{(1)}} - \Lambda_{q^{(2)}}\|\Big|^{-1}\,\bigg), 
\end{align}
where $\mu\in(0,1)$ and the constant $C=C(\mu,\Omega,M,T)>0$.
\end{theorem}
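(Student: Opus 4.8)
The plan is to convert the difference $q:=q^{(1)}-q^{(2)}$ into data for the \emph{light ray transform} via an integral identity fed by geometric optics solutions, and then to recover $q$ by splitting its Fourier transform into a low-frequency part controlled by $\Lambda_{q^{(1)}}-\Lambda_{q^{(2)}}$ and a high-frequency tail controlled by the a priori bound $M$. First I would establish an integral identity. Fixing data $(\overrightarrow{\phi},\overrightarrow{\psi},\overrightarrow{f})\in\mathbf{L}$, let $\overrightarrow{u}^{(1)}$ solve \eqref{hprs} with $q^{(1)}$ and let $\overrightarrow{u}^{(2)}$ solve \eqref{hprs} with $q^{(2)}$ and the \emph{same} data, so that $\overrightarrow{u}:=\overrightarrow{u}^{(1)}-\overrightarrow{u}^{(2)}$ solves $\square\overrightarrow{u}+q^{(2)}\overrightarrow{u}=(q^{(2)}-q^{(1)})\overrightarrow{u}^{(1)}$ with vanishing Cauchy and lateral data. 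Pairing with a solution $\overrightarrow{v}$ of the adjoint system $\square\overrightarrow{v}+(q^{(2)})^{T}\overrightarrow{v}=\overrightarrow{0}$ and integrating by parts over $\Omega_T$ gives
\begin{align}\label{eq:int-id}
\int_{\Omega_T}q(t,x)\,\overrightarrow{u}^{(1)}\cdot\overline{\overrightarrow{v}}\;dx\,dt
=\big\langle (\Lambda_{q^{(1)}}-\Lambda_{q^{(2)}})(\overrightarrow{\phi},\overrightarrow{\psi},\overrightarrow{f}),\ \overrightarrow{v}\big\rangle,
\end{align}
where the right-hand side is the sum of the boundary and time-$T$ pairings produced by the Green identity, each bounded by $\|\Lambda_{q^{(1)}}-\Lambda_{q^{(2)}}\|\,\|(\overrightarrow{\phi},\overrightarrow{\psi},\overrightarrow{f})\|_{\LS}\,\|\overrightarrow{v}\|$ in the appropriate trace norm, thanks to the continuity furnished by Theorem \ref{thm:recons_1}.

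Next I would insert geometric optics solutions. For a large parameter $\rho>0$, a unit direction $\theta\in\mathbb{S}^{2}$ and an auxiliary frequency $\zeta$, I build
\[
\overrightarrow{u}^{(1)}=e^{i\rho(t-x\cdot\theta)}\big(\overrightarrow{a}+\overrightarrow{R}_1\big),\qquad
\overrightarrow{v}=e^{i\rho(t-x\cdot\theta)}\big(\overrightarrow{b}+\overrightarrow{R}_2\big),
\]
where $t-x\cdot\theta$ solves the eikonal equation, the amplitudes are transport-equation solutions of the form $\overrightarrow{a}=e^{i(x-t\theta)\cdot\zeta/2}\,\overrightarrow{e}_i$ and $\overrightarrow{b}=e^{-i(x-t\theta)\cdot\zeta/2}\,\overrightarrow{e}_j$ (constant along the ray $s\mapsto(s,x_0+s\theta)$ up to the chosen oscillation), and the remainders obey $\|\overrightarrow{R}_k\|_{\LS^2(\Omega_T)}=O(\rho^{-1})$ by an energy estimate using $\|q^{(k)}\|\le M$. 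Since the real phases cancel in $\overrightarrow{u}^{(1)}\cdot\overline{\overrightarrow{v}}$, the left side of \eqref{eq:int-id} tends, as $\rho\to\infty$, to $\widehat{q_{ji}}(-\theta\cdot\zeta,\zeta)$ for the scalar entry $q_{ji}:=q^{(1)}_{ji}-q^{(2)}_{ji}$; varying $\overrightarrow{e}_i,\overrightarrow{e}_j$ recovers every entry of $q$. As $\theta$ and $\zeta$ range, this yields $\widehat{q_{ji}}$ on the spacelike cone $\{|\tau|\le|\xi|\}$, exactly the range of the light ray transform, and the hypothesis $T>\mathrm{Diam}(\omega)$ guarantees that the rays crossing the cross-section $\omega$ make the corresponding integrals meaningful.

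I would then convert this into the stability bound by a frequency cutoff. Because $q_{ji}$ is compactly supported in $t\in(0,T)$ and in $x'\in\omega$, its Fourier transform in those variables is entire of exponential type; combined with the spacelike data from \eqref{eq:int-id} this lets me control $\widehat{q_{ji}}(\tau,\xi)$ for $|(\tau,\xi)|\le N$ by a bound of the form $e^{cN}\|\Lambda_{q^{(1)}}-\Lambda_{q^{(2)}}\|+O(\rho^{-1})$, in which the exponential factor is the price of analytic continuation from the cone to timelike frequencies. Letting $\rho\to\infty$ kills the remainder, so the low frequencies contribute $e^{cN}\|\Lambda_{q^{(1)}}-\Lambda_{q^{(2)}}\|$, while the high frequencies $|(\tau,\xi)|>N$ are estimated by $N^{-1}$ times the a priori bound $M$ coming from $q^{(k)}\in W^{1,\infty}$. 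Optimizing the cutoff $N\sim|\log\|\Lambda_{q^{(1)}}-\Lambda_{q^{(2)}}\||$ balances the two terms and produces the logarithmic contribution; a final Sobolev/interpolation step passing from the resulting $L^2$- (or $H^{-1}$-) control of $q$ to its $L^{\infty}$-norm, again using the uniform bound $M$, accounts for the exponent $\mu\in(0,1)$ and gives \eqref{main_stability_estimate_for_q}.

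The main obstacle I anticipate is the unbounded direction $x_3$. Since no decay of $q$ at infinity is assumed, the Fourier analysis above cannot be run globally; instead I would take a partial Fourier transform in $x_3$ and carry out the GO construction, the identity \eqref{eq:int-id}, and the cutoff argument uniformly in the dual variable, which requires the well-posedness and trace estimates of Theorem \ref{thm:recons_1} (with the weighted compatibility conditions built into $\mathbf{L}$) to remain uniform on the noncompact domain. Keeping the right-hand side of \eqref{eq:int-id} finite for the GO data in $\mathbf{L}$, and quantitatively balancing the exponential amplification $e^{cN}$ against the high-frequency tail, is the delicate point that forces the weak logarithmic modulus of continuity rather than a Hölder rate.
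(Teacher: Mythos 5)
Your overall strategy (an integral identity fed by geometric optics solutions, control of $\widehat{q}$ on the spacelike cone, analytic continuation plus a frequency cutoff, then interpolation up to $L^{\infty}$) is the same family of argument the paper uses, but the step that is actually hard here --- making it work on the unbounded cylinder $\Omega=\omega\times\mathbb{R}$ --- is where your proposal breaks down. Your GO solutions $e^{i\rho(t-x\cdot\theta)}\big(\overrightarrow{a}+\overrightarrow{R}_k\big)$ with $\theta\in\mathbb{S}^{2}$ and amplitudes $e^{\pm i(x-t\theta)\cdot\zeta/2}\,\overrightarrow{e}_i$ have modulus one uniformly in $x_3$, so at each fixed time they do not belong to $L^{2}(\Omega)$ or $H^{1}(\Omega)$, and their lateral traces are not in $\mathbf{L}$: they are inadmissible inputs for $\Lambda_q$, the boundary and time-$T$ pairings on the right-hand side of your integral identity are not finite, and the claimed remainder bound $\|\overrightarrow{R}_k\|_{\mathbf{L}^2(\Omega_T)}=O(\rho^{-1})$ ``by an energy estimate'' has no meaning because the effective source terms are not square integrable over $\Omega_T$. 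Your proposed repair --- a partial Fourier transform in $x_3$ --- also fails: $q$ is only $L^{\infty}$ in $x_3$ with no decay assumed, so its transform in that variable is not a function, and since multiplication by $q(t,x',x_3)$ becomes a convolution in the dual variable, nothing decouples. The paper's resolution is different: it decomposes $\square=(\partial_t^2-\Delta_{x'})-\partial_{x_3}^2$, takes the oscillating phase only in $(t,x')$ with $\theta\in\mathbb{S}^{1}$, and multiplies by a separate Schwartz factor in $x_3$, i.e. $\overrightarrow{u}^{\pm}=e^{\pm i\rho(x'\cdot\theta+t)}\varphi(x'+t\theta)h(x_3)\overrightarrow{K}^{\pm}+\overrightarrow{\Psi}^{\pm}$; it then never transforms in $x_3$ at all, but localizes with bumps $\varphi_{\varepsilon}(\cdot-y')$, $h_{\varepsilon}(\cdot-y_3)$, treats $y_3$ as a parameter, runs the light-ray transform and the three-variable Fourier analysis in $(t,x')$ only, and keeps every constant uniform in $y_3$, which is what yields the $L^{\infty}$ bound in the infinite direction.

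There is also a quantitative error in your cutoff argument: you cannot ``let $\rho\to\infty$ to kill the remainder.'' The $\mathbf{L}$-norms of the GO data grow like $\rho^{2}$, and the adjoint solution contributes another factor $\rho^{2}$, so the measurement term in the identity carries a factor $\rho^{4}\|\Lambda_{q^{(1)}}-\Lambda_{q^{(2)}}\|$; the resulting bound on the cone has the form $C\big(\rho^{\beta}\|\Lambda_{q^{(1)}}-\Lambda_{q^{(2)}}\|+\rho^{-\delta}\big)$, which blows up as $\rho\to\infty$ unless the two IO maps coincide. Hence $\rho$ must remain finite and be optimized jointly with the frequency cutoff and with $\|\Lambda_{q^{(1)}}-\Lambda_{q^{(2)}}\|$; in the paper this is done by tying $\rho$ to the cutoff $\alpha$ through $\alpha^{3/\mu}e^{2\alpha(1-\mu)/\mu}\rho^{-2\delta}=\alpha^{-2/\mu}$ before choosing $\alpha$ in terms of the measurement gap, and it is precisely this three-way balance that produces the mixed H\"older-plus-logarithm right-hand side of \eqref{main_stability_estimate_for_q}, rather than the purely logarithmic rate that your two-term balance $e^{cN}\|\Lambda_{q^{(1)}}-\Lambda_{q^{(2)}}\|\sim N^{-1}$ would give.
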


Furthermore, we can extend the result mentioned in Theorem \ref{main result}. The stable determination of matrix potential $q$ can be derived from the measurements in a bounded subset of $\Sigma$. However, some additional information is required on the matrix potential $q$. More precisely, for $R>0$, we introduce the space $\LS_R$ as follows:
\begin{align*}
    \LS_R = \{(\overrightarrow {\phi},\overrightarrow {\psi},\overrightarrow {f})\in \LS: \overrightarrow {f}(t,x',x_3)=\overrightarrow {0},~ t\in (0,T),~x'\in\omega,~|x_3|\geq R \}.
\end{align*}
Let us also define $\Lambda^{(R)}_{{q}}$ the input-output map associated with the subset of lateral boundary
 \begin{align*}
\Lambda^{(R)}_{{q}}: \mathbf{L}_R  \quad & \rightarrow \quad \mathbf{H}^{1} (\Omega)\times\mathbf{L}^{2}(\Omega)\times \LS^2(\Sigma_R)
\end{align*}
\begin{equation}\label{input-output map_R}
\Lambda^{(R)}_{{q}}~\Big(\overrightarrow {\phi},\overrightarrow {\psi},\overrightarrow {f}\Big) := \Big(\overrightarrow {u} \big | _{t=T}, \partial_t \overrightarrow {u}\big |_{t=T}, \partial_\nu \overrightarrow {u}\big |_ {\Sigma_R}\Big),
\end{equation}
where $\overrightarrow {u}$ is the solution of \eqref{hprs} and $\Sigma_R:=(0,T)\times\partial\omega\times(-R,R)$. Consequently, the stability result can be stated as follows:
\begin{theorem}\label{main result_with_partial_data}
Let ${q^{(1)}}, {q^{(2)}}\in W^{1,\infty}(\Omega_T)$ be two sets of matrix potentials with $\|q^{(k)}\|\leq M,$ for $k=1,2$. Moreover, we assume that there exists $r>0$ for which
\begin{align}\label{q_in_bounded_domain}
    \big\|q^{(1)}-q^{(2)} \big\|_{L^{\infty}(\Omega_T)}=\big\|q^{(1)}-q^{(2)} \big\|_{L^{\infty}((0,T)\times\omega\times(-r,r))}.
\end{align}
Let $\overrightarrow {u}^{(k)}$ be the solution of \eqref{hprs} corresponding to the matrix potential $q= q^{(k)}$ and $\Lambda^{(R)}_{q^{(k)}}$ be the given input-output map defined by \eqref{input-output map_R} corresponding to $\overrightarrow {u}^{(k)}$. Then, the following stability estimate holds for all $R>r$ and $T>Diam(\omega)$,
\begin{align}\label{main_stability_estimate_for_q_with_partial_boundary_data}
    \big\|q^{(1)}-q^{(2)} \big\|^{2/\mu}_{L^{\infty}(\Omega_T)}\leq C \bigg(\|\Lambda^{(R)}_{q^{(1)}} - \Lambda^{(R)}_{q^{(2)}}\|^{\mu/2}+\Big|\log\|\Lambda^{(R)}_{q^{(1)}} - \Lambda^{(R)}_{q^{(2)}}\|\Big|^{-1}\,\bigg), 
\end{align}
where $\mu\in(0,1)$ and the constant $C=C(\mu,\Omega,M,T,R)>0$.
\end{theorem}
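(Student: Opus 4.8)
The plan is to deduce Theorem~\ref{main result_with_partial_data} from the machinery already assembled for Theorem~\ref{main result}, the single new ingredient being a localization in the unbounded variable $x_3$ that confines the lateral traces of the geometric optics (GO) solutions to $\Sigma_R$. I would begin with the integral identity derived in Section~\ref{sec:stab_esti}, which expresses
\[
\int_{\Omega_T}\big(q^{(1)}-q^{(2)}\big)\,\overrightarrow{u}^{(1)}\cdot\overline{\overrightarrow{u}^{(2)}}\,dx\,dt
\]
as a sum of a term carried by the full-domain traces $(\overrightarrow{u},\partial_t\overrightarrow{u})\big|_{t=T}$ and a lateral term pairing $\partial_\nu$ of the solutions against the boundary traces $\overrightarrow{u}\big|_{\Sigma}$. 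Since the components $\Lambda^1_q,\Lambda^2_q$ of $\Lambda^{(R)}_q$ are still recorded on all of $\Omega$, only the lateral term must be handled: if the boundary traces $\overrightarrow{u}^{(1)}\big|_{\Sigma}$ and $\overrightarrow{u}^{(2)}\big|_{\Sigma}$ are supported in $\{|x_3|<R\}$, the lateral integral collapses onto $\Sigma_R$ and the whole identity is driven by $\Lambda^{(R)}_{q^{(1)}}-\Lambda^{(R)}_{q^{(2)}}$ acting on data in $\mathbf{L}_R$.

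To produce such traces I would exploit the hypothesis \eqref{q_in_bounded_domain}. Because $\|q^{(1)}-q^{(2)}\|_{L^\infty(\Omega_T)}$ is attained on the bounded slab $(0,T)\times\omega\times(-r,r)$, it suffices to recover the potential difference there, and for that I only need GO beams whose light rays traverse $\{|x_3|<r\}$. I would take the GO solutions of Section~\ref{sec:GO_soln} and multiply their amplitudes by a cutoff $\chi\in C^\infty_c(-R,R)$ with $\chi\equiv 1$ on $[-r,r]$, choosing the null directions with a sufficiently small $x_3$-component so that, for $R>r$, the associated rays enter and leave $\Omega$ only through $\partial\omega\times(-R,R)$. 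The cut function $\chi\,\overrightarrow{u}$ then has lateral trace supported in $\{|x_3|<R\}$ and solves the wave system up to the commutator source $[\square,\chi]\overrightarrow{u}=-(\chi''+2\chi'\partial_{x_3})\overrightarrow{u}$, which is supported in the annular region $\{r\le|x_3|\le R\}$, i.e.\ away from the slab carrying the light ray transform.

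From this point I would run the argument of Theorem~\ref{main result} unchanged. Letting the large parameter $\lambda\to\infty$, the leading part of $\overrightarrow{u}^{(1)}\cdot\overline{\overrightarrow{u}^{(2)}}$ reconstructs the light ray transform of $q^{(1)}-q^{(2)}$ along the rays through the slab; the GO remainders and the commutator contribution are $o(1)$ in $\lambda$ because the beams concentrate away from $\{|x_3|=R\}$; and the right-hand side is bounded by $\|\Lambda^{(R)}_{q^{(1)}}-\Lambda^{(R)}_{q^{(2)}}\|$ times the $\lambda$-dependent norms of the GO data. Inverting the light ray transform by the same Fourier-analytic estimate that yields \eqref{main_stability_estimate_for_q}, splitting high and low frequencies against the a priori bound $M$ and optimizing in $\lambda$, produces the log--log estimate \eqref{main_stability_estimate_for_q_with_partial_boundary_data}, with the constant now depending additionally on $R$.

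The step I expect to be the main obstacle is the localization itself: I must verify that, for $R>r$ and $T>Diam(\omega)$, the null directions needed to probe every point of the slab $\{|x_3|<r\}$ can indeed be selected so that their rays meet the lateral boundary only inside $\Sigma_R$, and that the commutator source near $\{|x_3|=R\}$ — together with the transport mismatch it creates for the GO amplitude — neither corrupts the leading light ray transform nor spoils the $\lambda$-decay of the remainder. Controlling this truncation error uniformly, while keeping the $\lambda$-growth of the GO-data norms identical to the full-data case so that the final optimization is unaffected, is the delicate point; the remainder of the proof is a direct transcription of Theorem~\ref{main result}.
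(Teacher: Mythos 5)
Your high-level plan — localize the probes in $x_3$ so that all lateral traces live in $\Sigma_R$, recover $q^{(1)}-q^{(2)}$ on the slab $(0,T)\times\omega\times(-r,r)$, and conclude via \eqref{q_in_bounded_domain} — is exactly the paper's strategy. But your mechanism for achieving the localization (multiplying the GO solutions by a cutoff $\chi(x_3)$) has a genuine gap, and it overlooks the one observation that makes the whole thing trivial. In the construction of Lemma \ref{gocons5}, the function $h$ is a \emph{free parameter} (any element of $\mathcal{S}(\mathbb{R})$), and the remainders satisfy $\overrightarrow{\Psi}^{\pm}|_{\Sigma}=\overrightarrow{0}$ by \eqref{go_temp1}. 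Hence the lateral trace of the \emph{exact} GO solution \eqref{hypdesoln7} is precisely its leading term $e^{\pm i\rho(x'\cdot\theta+t)}\varphi(x'+t\theta)h(x_3)\overrightarrow{K}^{\pm}$, and choosing $h\in\mathcal{C}^{\infty}_0((-R,R))$ (and, in the concentration step of Lemma \ref{lem:esti_q}, centering $h_\varepsilon$ at $y_3\in(-r,r)$ with $\varepsilon$ small) already forces all boundary data into $\LS_R$ and all trace pairings onto $\Sigma_R$ — no truncation of solutions, no commutator, no geometric condition on ray directions. This is the content of the paper's Lemma \ref{lemma 1 for stab_with_partial_data}; the rest of the proof is verbatim Theorem \ref{main result} followed by \eqref{q_in_bounded_domain}. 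Your worry about null directions escaping through $\{|x_3|\geq R\}$ is moot in this framework: the phase $\rho(x'\cdot\theta+t)$ with $\theta\in\mathbb{S}^1\subset\mathbb{R}^2$ has no $x_3$-component at all, so the beams propagate at fixed $x_3$ and the light ray transform is applied slice-by-slice in $x_3$.

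The cutoff route, as you describe it, does not merely leave a "delicate point" — it fails quantitatively. First, $\chi\overrightarrow{u}^{(1)}$ is not a solution of $\mathcal{L}_{q^{(1)}}\overrightarrow{u}=\overrightarrow{0}$, so $\partial_\nu(\chi\overrightarrow{u}^{(1)})|_{\Sigma_R}$ is \emph{not} $\Lambda^{(R),3}_{q^{(1)}}$ applied to its Cauchy/boundary data; the lateral terms in the integral identity \eqref{intid2} can then no longer be written as differences of input-output maps, which is the entire point of Proposition \ref{propintid}. Repairing this means comparing $\chi\overrightarrow{u}^{(1)}$ with the exact $q^{(1)}$-solution sharing its data, and that correction solves an IBVP with source $[\square,\chi]\overrightarrow{u}^{(1)}$. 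Second, this source is not $o(1)$ in $\rho$: on $\operatorname{supp}\chi'$ (disjoint from $\operatorname{supp}h$) it reduces to $[\square,\chi]\overrightarrow{\Psi}^{(1)}=-(\chi''\overrightarrow{\Psi}^{(1)}+2\chi'\partial_{x_3}\overrightarrow{\Psi}^{(1)})$, and the estimate \eqref{psi inequ} only gives $\|\nabla\overrightarrow{\Psi}^{(1)}\|_{L^2(\Omega_T)}=O(1)$, with no gain from oscillation since the phase is independent of $x_3$ and cancels exactly against the adjoint solution's phase. The resulting truncation error is $O(1)$, the same size as the light-ray-transform term you are trying to isolate, so the $\rho\to\infty$ limit extracts nothing. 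In short: the step you flagged as the main obstacle is indeed fatal as set up, and the fix is not to control the commutator but to eliminate it by building the compact $x_3$-support into $h$ inside Lemma \ref{gocons5} itself.
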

\begin{remark}
    Since the IBVP \eqref{hprs} is defined over the infinite waveguide $\Omega_R$, the Theorem \ref{main result_with_partial_data} can not be derived from results available for the bounded domain. However, condition \eqref{q_in_bounded_domain} holds if $q^{(1)}=q^{(2)}$ for all $(t,x)$ lies outside $(0,T)\times\omega\times (-r,r)$. Hence due to \eqref{q_in_bounded_domain}, it will be sufficient to determine the matrix potential in the bounded domain $(0,T)\times\omega\times (-r,r)$. 
\end{remark}
\section{Geometric Optics solutions}\label{sec:GO_soln}
In this section, we give the construction of exponential growing and decaying solutions which will be instrumental for our stability result. More precisely, following the ideas from \cite{kian2013stability} used for constructing geometric optics (GO) solutions for a scalar wave equation in an infinite waveguide, we construct the suitable GO solutions for the system of wave equations considered in the present article. We also remark that the GO solutions for a system of wave equations in a bounded domain are constructed in \cite{mishra2021determining}. However, the approach used in \cite{mishra2021determining} can not be carried out for our case. To overcome this, we decompose the operator $\square:=\pl_t^2 - \Delta_{x}$ as $\pl_t^2 - \Delta_{x'}$ and $-\pl^2_{x_3}$ considering $x=(x',x_3),~ x'\in\omega,x_3\in\R$. 

We prove the following Lemma in which $\mathbb{S}^1 := \{y\in\R^2: |y|=1\}$, $\mathcal{S}(\R)$ denotes the Schwarz space over $\R$ and $C_{0}^{\infty}(\R^2)$ denote the space all smooth functions having compact support in $\R^2$.
\begin{lemma}\label{gocons5}
Let $q\in W^{1,\infty}(\Omega_T)$ be a matrix-valued potential, $\theta \in  \mathbb{S}^1$, $h \in \mathcal{S}(\mathbb{R})$  and $\varphi\in \mathcal{C}_{0}^{\infty}(\mathbb{R}^{2})$ be given. Then for any $\rho > 0$, the equation
\begin{equation}\label{wave equation GO}
\partial^{2}_{t}\overrightarrow {u}(t,x) - \Delta \overrightarrow {u}(t,x) + q(t,x)\overrightarrow {u}(t,x) = \overrightarrow {0}, \qquad  (t,x)\in\Omega_T, 
\end{equation}
admits a solution of the form 
\begin{equation}\label{hypdesoln7}
\overrightarrow {u}^{\pm}(t,x) = e^{\pm i\rho(x' \cdot \theta + t)} \varphi(x'+ t\theta)h(x_{3})\overrightarrow {K}^{\pm}+ \overrightarrow {\Psi}^{\pm}(t,x\,;\rho),\ \  t\in(0,T),\,x'\in\omega,\,x_3\in\mathbb{R},
\end{equation}
where $\overrightarrow {K}^{\pm}\in \mathbb{R}^n$ is any constant vector and $\overrightarrow {\Psi}^{\pm}(t,x\,;\rho)$ satisfies
\begin{equation}\label{go_temp1} 
\begin{split}
    \overrightarrow {\Psi}^{+}(0,x\,;\rho)&= \partial_{t}\overrightarrow {\Psi}^{+}(0,x\,;\rho) = \overrightarrow {0},\qquad  x\in \Omega,\\
 \overrightarrow {\Psi}^{-}(T,x\,;\rho)& = \partial_{t}\overrightarrow {\Psi}^{-}(T,x\,;\rho) = \overrightarrow {0},\qquad x\in \Omega,\\
\overrightarrow {\Psi}^{\pm}(t,x\,;\rho)& = \overrightarrow {0},\qquad (t,x)\in\Sigma. 
\end{split}
\end{equation}
Moreover, there exists a constant $C > 0$ depending only on $\omega, T$ and $\|q\|_{W^{1,\infty}(\Omega_T)}$ such that
\begin{align}\label{psi inequ}
\rho\| \overrightarrow {\Psi}^{\pm}(\cdot;\rho)\|_{\mathbf{L}^{2}(\Omega_T)}+ \|\nabla \overrightarrow {\Psi}^{\pm}(\cdot;\rho)\|_{L^{2}(\Omega_T)}\leq C \|\overrightarrow {K}^{\pm}\|_{\mathbb{R}^n} \|\varphi\|_{H^3(\mathbb{R}^2)}\|h\|_{H^2(\mathbb{R})}.
\end{align}
\end{lemma}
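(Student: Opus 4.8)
The plan is to construct $\overrightarrow{u}^{\pm}$ as a leading oscillatory term plus a remainder $\overrightarrow{\Psi}^{\pm}$, and to obtain the gain of one power of $\rho$ in the $\mathbf{L}^2$-bound for $\overrightarrow{\Psi}^{\pm}$ by exploiting the oscillation through a duality argument. First I set the phase $\Theta(t,x'):=x'\cdot\theta+t$ and the leading amplitude $\overrightarrow{a}(t,x):=\varphi(x'+t\theta)h(x_3)\overrightarrow{K}^{\pm}$, so that the proposed leading term is $\overrightarrow{u}_0^{\pm}:=e^{\pm i\rho\Theta}\overrightarrow{a}$, and I compute $(\square+q)\overrightarrow{u}_0^{\pm}$ directly. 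Since $|\theta|=1$ one has $|\partial_t\Theta|^2=|\nabla_{x'}\Theta|^2=1$, so the second-order-in-$\rho$ (eikonal) terms cancel; since $\varphi$ is evaluated along $x'+t\theta$ one has the transport identity $\partial_t\overrightarrow{a}=\theta\cdot\nabla_{x'}\overrightarrow{a}$, so the first-order-in-$\rho$ contribution $\pm 2i\rho(\partial_t\overrightarrow{a}-\theta\cdot\nabla_{x'}\overrightarrow{a})$ vanishes as well. This is exactly where the decomposition $\square=(\partial_t^2-\Delta_{x'})-\partial_{x_3}^2$ enters: the null field $(1,\theta,0)$ lives only in the $(t,x')$-variables and leaves $h(x_3)$ untouched. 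What remains is the $\rho$-independent source
$$\overrightarrow{F}^{\pm}:=-(\square+q)\overrightarrow{u}_0^{\pm}=-e^{\pm i\rho\Theta}\,\overrightarrow{g}^{\pm},\qquad \overrightarrow{g}^{\pm}:=(\partial_t^2-\Delta_{x'}-\partial_{x_3}^2+q)\overrightarrow{a},$$
which, since $\varphi\in C_0^\infty(\mathbb{R}^2)$, $h\in\mathcal{S}(\mathbb{R})$ and $q\in W^{1,\infty}(\Omega_T)$, lies in $\mathbf{L}^2(\Omega_T)$ with $\|\overrightarrow{g}^{\pm}\|_{\mathbf{L}^2(\Omega_T)}\le C\|\overrightarrow{K}^{\pm}\|\,\|\varphi\|_{H^2}\|h\|_{H^2}$.

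Next I define $\overrightarrow{\Psi}^{\pm}$ as the unique solution of $\square\overrightarrow{\Psi}^{\pm}+q\overrightarrow{\Psi}^{\pm}=\overrightarrow{F}^{\pm}$ with homogeneous lateral data $\overrightarrow{\Psi}^{\pm}|_{\Sigma}=\overrightarrow{0}$ and homogeneous Cauchy data at $t=0$ (for the $+$ solution) or at $t=T$ (for the $-$ solution, obtained from the forward theory by the reflection $t\mapsto T-t$). Existence and the basic energy bound are supplied by the well-posedness already established for the auxiliary problem \eqref{equtn for v} in the proof of Theorem \ref{thm:recons_1}, applied with the source $\overrightarrow{F}^{\pm}\in\mathbf{L}^2(\Omega_T)$; this produces $\overrightarrow{\Psi}^{\pm}$ satisfying the initial/terminal/boundary conditions \eqref{go_temp1} together with
$$\|\partial_t\overrightarrow{\Psi}^{\pm}\|_{\mathbf{L}^2(\Omega_T)}+\|\nabla\overrightarrow{\Psi}^{\pm}\|_{\mathbf{L}^2(\Omega_T)}\le C\|\overrightarrow{F}^{\pm}\|_{\mathbf{L}^2(\Omega_T)}=C\|\overrightarrow{g}^{\pm}\|_{\mathbf{L}^2(\Omega_T)},$$
which already yields the gradient term of \eqref{psi inequ}. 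By construction $\overrightarrow{u}^{\pm}=\overrightarrow{u}_0^{\pm}+\overrightarrow{\Psi}^{\pm}$ then solves \eqref{wave equation GO}.

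The crux is the factor $\rho$ in front of $\|\overrightarrow{\Psi}^{\pm}\|_{\mathbf{L}^2(\Omega_T)}$, which the energy estimate alone cannot give; I would extract it by duality combined with a non-stationary-phase integration by parts. For the $+$ case, test $\overrightarrow{\Psi}^{+}$ against an arbitrary $\overrightarrow{\chi}\in\mathbf{L}^2(\Omega_T)$ and let $\overrightarrow{\Phi}$ solve the adjoint backward problem $\square\overrightarrow{\Phi}+q^{*}\overrightarrow{\Phi}=\overrightarrow{\chi}$ (with $q^{*}$ the conjugate-transpose matrix) under $\overrightarrow{\Phi}(T,\cdot)=\partial_t\overrightarrow{\Phi}(T,\cdot)=\overrightarrow{0}$ and $\overrightarrow{\Phi}|_{\Sigma}=\overrightarrow{0}$. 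Green's identity for $\square$ makes every boundary contribution vanish—the Cauchy terms because $\overrightarrow{\Psi}^{+}$ and $\overrightarrow{\Phi}$ carry complementary homogeneous data at $t=0$ and $t=T$, and the lateral terms because both traces vanish on $\Sigma$—so that $\langle\overrightarrow{\Psi}^{+},\overrightarrow{\chi}\rangle=\langle\overrightarrow{F}^{+},\overrightarrow{\Phi}\rangle$. Writing $e^{\pm i\rho\Theta}=\pm(2i\rho)^{-1}(\partial_t+\theta\cdot\nabla_{x'})e^{\pm i\rho\Theta}$ and integrating by parts along the constant null field $(1,\theta,0)$ transfers one $(t,x')$-derivative onto $\overrightarrow{g}^{+}\cdot\overline{\overrightarrow{\Phi}}$ and produces the prefactor $\rho^{-1}$: the lateral boundary term vanishes since $\overrightarrow{\Phi}|_{\Sigma}=\overrightarrow{0}$, the $t=T$ term vanishes since $\overrightarrow{\Phi}(T,\cdot)=\overrightarrow{0}$, and the surviving $t=0$ term is bounded by $C\rho^{-1}\|\overrightarrow{g}^{+}(0,\cdot)\|_{\mathbf{L}^2}\|\overrightarrow{\Phi}(0,\cdot)\|_{\mathbf{L}^2}$. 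Using the energy bound $\|\overrightarrow{\Phi}\|_{\mathbf{H}^1(\Omega_T)}+\|\overrightarrow{\Phi}(0,\cdot)\|_{\mathbf{L}^2}\le C\|\overrightarrow{\chi}\|_{\mathbf{L}^2(\Omega_T)}$, and observing that only $t$- and $x'$-derivatives of $\overrightarrow{g}^{+}$ occur (so $h$ need only lie in $H^2$ while $\varphi$ must lie in $H^3$), I obtain $|\langle\overrightarrow{\Psi}^{+},\overrightarrow{\chi}\rangle|\le C\rho^{-1}\|\overrightarrow{K}^{+}\|\,\|\varphi\|_{H^3}\|h\|_{H^2}\,\|\overrightarrow{\chi}\|_{\mathbf{L}^2(\Omega_T)}$. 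Taking the supremum over $\overrightarrow{\chi}$ gives $\rho\|\overrightarrow{\Psi}^{+}\|_{\mathbf{L}^2(\Omega_T)}\le C\|\overrightarrow{K}^{+}\|\|\varphi\|_{H^3}\|h\|_{H^2}$; the $-$ case is identical after the reflection $t\mapsto T-t$, and together with the gradient bound this is precisely \eqref{psi inequ}.

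I expect the main obstacle to be exactly this $\rho$-gain: one must organise the duality so that no boundary term spoils the estimate—the $t=0$ contribution is the delicate one, and it is saved only because it already carries the prefactor $\rho^{-1}$—and one must track which derivatives fall on the amplitude. The whole point of splitting off $-\partial_{x_3}^2$ is that the oscillatory integration by parts never differentiates $h$, so the regularity budget stays at $\varphi\in H^3$, $h\in H^2$ as required. A secondary technical point is integrability in the unbounded variable $x_3$, which is guaranteed by $h\in\mathcal{S}(\mathbb{R})$ and $\varphi$ of compact support, ensuring that all integrals over $\Omega_T$ and the duality pairings are finite despite the waveguide being infinite.
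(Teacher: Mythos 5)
Your construction is correct, and it coincides with the paper's up to the point where the remainder $\overrightarrow{\Psi}^{\pm}$ is defined: both proofs use $|\theta|=1$ and the transport identity $\partial_t\overrightarrow{a}=\theta\cdot\nabla_{x'}\overrightarrow{a}$ to kill the $\rho^2$ and $\rho$ terms, obtain a $\rho$-independent source $e^{\pm i\rho\Theta}\overrightarrow{g}^{\pm}$, and invoke the well-posedness theory from the proof of Theorem \ref{thm:recons_1} (Lions--Magenes) to produce $\overrightarrow{\Psi}^{\pm}$ with the data \eqref{go_temp1} and the gradient half of \eqref{psi inequ}. Where you genuinely diverge is the crucial $\rho$-gain in $\mathbf{L}^2(\Omega_T)$. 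The paper stays entirely inside the forward problem: it sets $\overrightarrow{W}(t,x)=\int_0^t\overrightarrow{\Psi}^{+}(s,x)\,ds$, derives the wave equation \eqref{eqn_W} for $\overrightarrow{W}$ whose source splits into $\overrightarrow{R}^1=-\int_0^t e^{i\rho(x'\cdot\theta+s)}\overrightarrow{J}\,ds$ (which gains $\rho^{-1}$ by integrating by parts in $s$ alone, via $e^{i\rho(x'\cdot\theta+s)}=(i\rho)^{-1}\partial_s e^{i\rho(x'\cdot\theta+s)}$) and a commutator term $\overrightarrow{R}^2=\int_0^t[q(t,\cdot)-q(s,\cdot)]\overrightarrow{\Psi}^{+}(s,\cdot)\,ds$, and then closes the estimate with the energy inequality and Gronwall. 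You instead dualize: pair $\overrightarrow{\Psi}^{+}$ against arbitrary $\overrightarrow{\chi}\in\mathbf{L}^2(\Omega_T)$, solve the adjoint backward problem, use Green's identity (all boundary terms vanish by the complementary homogeneous data), and perform the non-stationary-phase integration by parts along the null field $(1,\theta,0)$, with the surviving $t=0$ term harmless because it already carries the $\rho^{-1}$ prefactor. Both arguments spend the hypothesis $q\in W^{1,\infty}$ and the regularity budget $\varphi\in H^3$, $h\in H^2$ in the same way (one extra $(t,x')$-derivative falls on the amplitude, never on $h$). Your route avoids the antiderivative trick and the Gronwall/commutator step, at the price of introducing the adjoint problem for every test function and of justifying Green's identity for finite-energy solutions (a standard density argument you should at least mention, since $\overrightarrow{\Psi}^{+}$ is a priori only a weak solution; the paper sidesteps this by upgrading $\overrightarrow{\Psi}^{+}$ to $\mathbf{H}^2(\Omega_T)$ using the $H^1$-in-time regularity of the source). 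The paper's route is self-contained within forward energy estimates; yours is arguably more transparent about where the oscillation is exploited.
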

\begin{proof}
We give the proof for the construction of $\overrightarrow {u}^{+}$ while that of construction of $\overrightarrow {u}^{-}$ follows similarly. 
To begin with, we first  observe the following
    $$(\partial^{2}_{t} - \Delta_{x'})\big[\varphi(x'+ t\theta)h(x_{3})e^{  i\rho(x' \cdot \theta + t)}\big]=e^{  i\rho(x' \cdot \theta + t)}(\partial^{2}_{t} - \Delta_{x'} )\big[\varphi(x'+ t\theta)h(x_{3})\big], $$ 
and similarly 
    $$-\partial^{2}_{x_3}\big[\varphi(x'+ t\theta)h(x_{3})e^{ i\rho(x' \cdot \theta + t)}\big]=e^{  i\rho(x' \cdot \theta + t)}\big[-\partial^{2}_{x_3}(\varphi(x'+ t\theta)h(x_{3}))\big].$$ 
As a consequence, we obtain
\begin{align*}
(\partial^{2}_{t} - \Delta+q(t,x))\big[e^{i\rho(x' \cdot \theta + t)}\varphi(x'+ t\theta)h(x_{3})\overrightarrow {K}^{+}\big] 
=e^{ i\rho(x' \cdot \theta + t)} \overrightarrow {J}(t,x',x_3),
\end{align*}
where $\overrightarrow {J}(t,x',x_3)$ is given by
\begin{align}\label{defn_J}
\overrightarrow {J}(t,x',x_3)=(\partial^{2}_{t} - \Delta )\big[\varphi(x'+ t\theta)h(x_{3})\overrightarrow {K}^+\big]+ q(t,x)\big[\varphi(x'+ t\theta)h(x_{3})\overrightarrow {K}^+\big].
\end{align}
Now in order to have $\overrightarrow {u}^{+}$ given by \eqref{hypdesoln7} is a solution to \eqref{wave equation GO}, it is enough to 
show the existence of $\overrightarrow {\Psi}^+$ satisfying the following equation
    \begin{equation}\label{system for Psi+}
        \begin{cases}
           \partial^{2}_{t}\overrightarrow {\Psi}^+(t,x) - \Delta \overrightarrow {\Psi}^+(t,x) + q(t,x)\overrightarrow {\Psi}^+(t,x) =-e^{ i\rho(x' \cdot \theta + t)} \overrightarrow {J}(t,x',x_3), &  (t,x)\in \Omega_T,\\
            \overrightarrow {\Psi}^{+}(0,x\,;\rho) = \partial_{t}\overrightarrow {\Psi}^{+}(0,x\,;\rho) = \overrightarrow {0}, & x\in \Omega,\\
           \overrightarrow {\Psi}^+(t,x) = \overrightarrow {0}, &  (t,x)\in \Sigma.
        \end{cases}
    \end{equation} 
From \eqref{defn_J}, we find that $e^{ i\rho(x' \cdot \theta + t)} \overrightarrow {J}(t,x',x_3)\in \mathbf{L}^2(\Omega_T)$. Following the similar analysis carried out to demonstrate the well-posedness of \eqref{equtn for v}, we ensure the existence of $\overrightarrow {\Psi}^+\in \mathbf{L}^2(0,T;\mathbf{H}_0^1(\Omega))\cap\mathbf{H}^1(0,T;\mathbf{L}^2(\Omega))$ as a solution of \eqref{system for Psi+} (also refer to the Theorem $8.1$ in \cite[Chapter 3]{lions2012non}). Moreover, $e^{ i\rho(x' \cdot \theta + t)} \overrightarrow {J}(t,x',x_3)\in \mathbf{H}^1(0,T;\mathbf{L}^2(\Omega))$ and again applying the Theorem $2.1$ in \cite[Chapter 5]{lions1968problemes}, we conclude that $\overrightarrow {\Psi}^+\in \mathbf{H}^2(\Omega_T)$.

Let us define
\begin{align}\label{antiderivative_of_Psi_is_W}
    \overrightarrow {W}(t,x)=\int_0^t\overrightarrow {\Psi}^+(s,x)ds,\qquad (t,x)\in\Omega_T,
\end{align}
and consequently, $\overrightarrow {W}$  satisfies 
\begin{align}\label{eqn_W}
    \begin{cases}
        \partial^{2}_{t}\overrightarrow {W}(t,x) - \Delta\overrightarrow {W}(t,x) + q(t,x)\overrightarrow {W}(t,x)=\overrightarrow {R}^1(t,x)+\overrightarrow {R}^2(t,x),&  ( t,x)\in\Omega_T, \\
        \overrightarrow {W} (0,x) = \partial_{t}\overrightarrow {W} (0,x) = \overrightarrow {0}, & x\in \Omega,\\
          \overrightarrow {W}(t,x) = \overrightarrow {0}, & (t,x)\in\Sigma,
    \end{cases}
\end{align}
where $\overrightarrow {R}^1$ and $\overrightarrow {R}^2$ are given by
\begin{align*}
\overrightarrow {R}^1(t,x)&=-\int_0^t e^{ i\rho(x' \cdot \theta + s)} \overrightarrow {J}(s,x',x_3)\,ds,\\
\overrightarrow {R}^2(t,x)&=\int_0^t [q(t,x)-q(s,x)]\overrightarrow {\Psi}^+(s,x)ds,
\end{align*}
respectively. Let $\tau\in [0,T]$. From the energy estimate of \eqref{eqn_W}, we get
\begin{align}\label{GO_temp2}
\|\overrightarrow {\Psi}^+(\tau,\cdot)\|^2_{\LS^2(\Omega)}&=\|\partial_t\overrightarrow {W}(\tau,\cdot)\|^2_{\LS^2(\Omega)}
        \leq C \Big(\|\overrightarrow {R}^1 +\overrightarrow {R}^2\|^2_{L^1(0,\tau;\LS^2(\Omega))}\Big)\nonumber\\
        &\leq C \Big( \|\overrightarrow {R}^1\|^2_{\LS^2(\Omega_T)}+ \|\overrightarrow {R}^2\|^2_{L^2(0,\tau;\LS^2(\Omega))} 
        \Big).
\end{align}    
    Moreover, we have
    \begin{align*}
     \overrightarrow {R}^1 (t,x) & =-\int_0^t e^{ i\rho(x' \cdot \theta + s)} \overrightarrow {J}(s,x',x_3)\,ds =- \frac{1}{i\rho}  \int_0^t\partial_s e^{ i\rho(x' \cdot \theta + s)} \overrightarrow {J}(s,x',x_3)\,ds\\
        &=\frac{1}{i\rho}  \int_0^t e^{ i\rho(x' \cdot \theta + s)} \partial_s\overrightarrow {J}(s,x',x_3)\,ds
        -\frac{e^{ i\rho(x' \cdot \theta + t)} \overrightarrow {J}(t,x',x_3)-e^{ i\rho(x' \cdot \theta )} \overrightarrow {J}(0,x',x_3)}{i\rho}, 
    \end{align*}
   and it follows from \eqref{defn_J}
 \begin{align}\label{go_temp3}
     \|\overrightarrow {R}^1\|_{\LS^2(\Omega_T)} \leq \frac{C}{\rho} 
     \|\overrightarrow {K}^+\|_{\mathbb{R}^n} \|h\|_{H^2(\mathbb{R})}\|\varphi\|_{H^3(\mathbb{R}^2)}.
 \end{align}
Again estimate for $\overrightarrow {R}^2$ can  be obtained by
\begin{align}\label{go_temp3}
    \|\overrightarrow {R}^2\|^2_{L^2(0,\tau;\LS^2(\Omega))} \leq C \|q\|^2_{W^{1,\infty}(\Omega_T)} 
    \int_0^\tau \|\pl_t \overrightarrow {W}(s,\cdot)\|^2_{\LS^2(\Omega)}~ds.
\end{align}

 Hence the estimates in \eqref{GO_temp2} reduces to
\begin{align*}
 \|\partial_t\overrightarrow {W}(\tau,\cdot)\|^2_{\mathbf{L}^2(\Omega)} &\leq C \Big(\frac{1}{\rho^2} 
     \|\overrightarrow {K}^+\|^2_{\mathbb{R}^n} \|h\|^2_{H^2(\mathbb{R})}\|\varphi\|^2_{H^3(\mathbb{R}^2)}+ \|q\|^2_{W^{1,\infty}(\Omega_T)} 
    \int_0^\tau \|\pl_t \overrightarrow {W}(s,\cdot)\|^2_{\LS^2(\Omega)}~ds \Big).
\end{align*}
 Applying the Gronwall's inequality, we deduce 
 \begin{align*}
     \|\partial_t\overrightarrow {W}(\tau,\cdot)\|^2_{\mathbf{L}^2(\Omega)} \leq &\frac{C}{\rho^2} 
     \|\overrightarrow {K}^+\|^2_{\mathbb{R}^n} \|h\|^2_{H^2(\mathbb{R})}\|\varphi\|^2_{H^3(\mathbb{R}^2)},
 \end{align*}
 for almost everywhere $\tau\in [0,T].$
 Incorporating \eqref{antiderivative_of_Psi_is_W} we have
 \begin{align}\label{energy_est_for_Psi}
     \|\overrightarrow {\Psi}^+\|_{\mathbf{L}^2(\Omega_T)}\leq  \frac{C}{\rho} 
     \|\overrightarrow {K}^+\|_{\mathbb{R}^n} \|h\|_{H^2(\mathbb{R})}\|\varphi\|_{H^3(\mathbb{R}^2)}.
 \end{align} 
 Combining the inequality \eqref{energy_est_for_Psi} along with the coercivity of the bilinear form
 \begin{align*}
     a(t; \overrightarrow {g},\overrightarrow {h}):= \int_{\Omega} \Big(\nabla \overrightarrow {g}(x)\cdot\overline{\nabla\overrightarrow {h}(x)}+ q(t,x)\overrightarrow {g}(x)\cdot\overline{\overrightarrow {h}(x)} \Big)~dx
 \end{align*}
associated to \eqref{system for Psi+} and applying \cite{kian2017unique} and \cite[Theorem 8.1]{lions2012non}, we deduce 
\begin{align*}
    \|\nabla \overrightarrow {\Psi}^{+}(\cdot;\rho)\|_{L^{2}(\Omega_T)}\leq C \|\overrightarrow {K}^{+}\|_{\mathbb{R}^n} \|\varphi\|_{H^3(\mathbb{R}^2)}\|h\|_{H^2(\mathbb{R})}
\end{align*}
and hence the estimate \eqref{psi inequ} is obtained. In an analogous way, the existence of $\overrightarrow {u}^-$ can be ensured and consequently, using \eqref{go_temp1}, the estimate \eqref{psi inequ} can be obtained. Hence the result follows.
\end{proof}
We make the following remark which will be required to study the stability estimate:
\begin{remark}
Using the fact that $\overrightarrow {\Psi}^{\pm}$ is vanishing at the boundary of $\Omega$, we have 
\begin{equation*}
    \begin{split}
        \overrightarrow {u}^{\pm}(t,x)=\overrightarrow {K}^{\pm}\left(\varphi(x'+ t\theta)h(x_{3})e^{ i\pm\rho(x' \cdot \theta + t)}\right) &\quad \text{on}\,\, \Sigma.
    \end{split}
\end{equation*}
Consequently, using the Theorem 2.2 in \cite[Chapter 4]{lions1968problemes}, we have
\begin{equation}\label{u1_at_bdry}
    \|\overrightarrow {u}^{\pm}\|_{\LS}\leq C \rho^2\|\overrightarrow {K}^{\pm}\|^2_{\mathbb{R}^n}\|\varphi\|_{H^{3}(\mathbb{R}^{2})}\|h\|_{H^{2}(\mathbb{R})}. 
\end{equation}
\end{remark}

\section{stability estimate}\label{sec:stab_esti}
We devote this section to proving our main stability result. 
In order to establish the stability estimate in Theorem \ref{main result}, we need to derive an appropriate integral identity.
 \begin{proposition}\label{propintid}
 {Let $\displaystyle q^{(k)}:=((q^{(k)}_{ij}))_{1\leq i,j\leq n}$ for $k=1,2$  be two sets of matrix potentials with  with $q^{(1)}_{ij},q^{(2)}_{ij}\in W^{1,\infty}(\Omega_T)$ for all $1\leq i,j\leq n$ and $\|q^{(k)}\|\leq M,$ for $k=1,2$.
Let $\overrightarrow {u}^{(k)}$ be the solution of \eqref{hprs} corresponding to the matrix potential $q= q^{(k)}$ and $\Lambda_{q^{(k)}}$ be the given IO map defined by \eqref{input-output map} corresponding to $\overrightarrow {u}^{(k)}$.}
Further, assume that $\Lambda_{q^{(1)}} \neq \Lambda_{q^{(2)}}$, then we have
\begin{equation}\label{intid2}
\begin{split}
    \int_{\Omega_T}&(q(t,x) \overrightarrow {u}^{(1)})\cdot\overrightarrow {v}~dx~dt=
    - \int_{\Sigma} \Big(\Lambda^{3}_{q^{(2)}}-\Lambda^{3}_{q^{(1)}}\Big)(\overrightarrow {\phi},\overrightarrow {\psi},\overrightarrow {f}) \cdot\overrightarrow {v}~dS~dt\\
    & + \int_{\Omega}\Big\{\Big(\Lambda^{2}_{q^{(2)}}-\Lambda^{2}_{q^{(1)}}\Big)(\overrightarrow {\phi},\overrightarrow {\psi},\overrightarrow {f})\cdot\overrightarrow {v}(T,\cdot)
     -\partial_{t}\overrightarrow {v}(T,\cdot)\cdot\Big(\Lambda^{1}_{q^{(2)}}-\Lambda^{1}_{q^{(1)}}\Big)(\overrightarrow {\phi},\overrightarrow {\psi},\overrightarrow {f})\Big\}dx,
        \end{split}
    \end{equation}
where $\overrightarrow {v}$ satisfies the adjoint equation $\mathcal{L}_{q^{(2)}}^{*}\overrightarrow {v}(t,x) = \overrightarrow {0}$ in $\Omega_T$.
\end{proposition}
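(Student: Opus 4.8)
The plan is to work with the difference $\overrightarrow{w}:=\overrightarrow{u}^{(1)}-\overrightarrow{u}^{(2)}$ of the two solutions, which share the same Cauchy and boundary data, and to put $q:=q^{(1)}-q^{(2)}$ (this is the $q$ appearing in \eqref{intid2}). Since $\mathcal{L}_{q^{(1)}}\overrightarrow{u}^{(1)}=\overrightarrow{0}$ gives $\square\overrightarrow{u}^{(1)}=-q^{(1)}\overrightarrow{u}^{(1)}$, a direct substitution shows $\mathcal{L}_{q^{(2)}}\overrightarrow{u}^{(1)}=(q^{(2)}-q^{(1)})\overrightarrow{u}^{(1)}=-q\,\overrightarrow{u}^{(1)}$, so that $\overrightarrow{w}$ solves the inhomogeneous problem $\mathcal{L}_{q^{(2)}}\overrightarrow{w}=-q\,\overrightarrow{u}^{(1)}$ in $\Omega_T$, subject to $\overrightarrow{w}(0,\cdot)=\partial_t\overrightarrow{w}(0,\cdot)=\overrightarrow{0}$ in $\Omega$ and $\overrightarrow{w}=\overrightarrow{0}$ on $\Sigma$. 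By Theorem \ref{thm:recons_1} the solution $\overrightarrow{w}\in\CS([0,T];\HS^1(\Omega))\cap\CS^1([0,T];\LS^2(\Omega))$ together with its normal trace $\partial_\nu\overrightarrow{w}\in\LS^2(\Sigma)$ carries enough regularity to legitimise the integrations by parts carried out below.

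Next I would pair this equation against a solution $\overrightarrow{v}$ of the adjoint equation $\mathcal{L}_{q^{(2)}}^{*}\overrightarrow{v}=\overrightarrow{0}$, where $\mathcal{L}_{q^{(2)}}^{*}=\square+(q^{(2)})^{T}$ because $\square$ is formally self-adjoint while the matrix potential contributes its transpose through $(q^{(2)}\overrightarrow{w})\cdot\overrightarrow{v}=\overrightarrow{w}\cdot((q^{(2)})^{T}\overrightarrow{v})$. Integrating $(\mathcal{L}_{q^{(2)}}\overrightarrow{w})\cdot\overrightarrow{v}$ over $\Omega_T$ and applying Green's formula — integration by parts twice in $t$ over $(0,T)$ and the spatial Green identity on $\Omega$ — yields
\begin{align*}
\int_{\Omega_T}(\mathcal{L}_{q^{(2)}}\overrightarrow{w})\cdot\overrightarrow{v}\,dx\,dt
=\int_{\Omega_T}\overrightarrow{w}\cdot(\mathcal{L}_{q^{(2)}}^{*}\overrightarrow{v})\,dx\,dt+\mathcal{B}_t+\mathcal{B}_x,
\end{align*}
where $\mathcal{B}_t$ collects the temporal contributions at $t=0,T$ and $\mathcal{B}_x$ the lateral contributions on $\Sigma$. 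Since $\mathcal{L}_{q^{(2)}}^{*}\overrightarrow{v}=\overrightarrow{0}$, the volume pairing of $\overrightarrow{w}$ drops out; the vanishing initial data $\overrightarrow{w}(0,\cdot)=\partial_t\overrightarrow{w}(0,\cdot)=\overrightarrow{0}$ kill the $t=0$ part of $\mathcal{B}_t$, and $\overrightarrow{w}|_\Sigma=\overrightarrow{0}$ removes the term $\int_\Sigma\overrightarrow{w}\cdot\partial_\nu\overrightarrow{v}$ from $\mathcal{B}_x$. What remains is the final-time pairing $\int_\Omega\big(\partial_t\overrightarrow{w}(T,\cdot)\cdot\overrightarrow{v}(T,\cdot)-\overrightarrow{w}(T,\cdot)\cdot\partial_t\overrightarrow{v}(T,\cdot)\big)\,dx$ together with the lateral term $-\int_\Sigma\partial_\nu\overrightarrow{w}\cdot\overrightarrow{v}\,dS\,dt$.

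The final step is purely a matter of inserting $\mathcal{L}_{q^{(2)}}\overrightarrow{w}=-q\,\overrightarrow{u}^{(1)}$ on the left, multiplying the whole identity by $-1$, and translating the Cauchy and Neumann traces of $\overrightarrow{w}$ through the input-output map via $\Lambda^{j}_{q^{(1)}}-\Lambda^{j}_{q^{(2)}}=-(\Lambda^{j}_{q^{(2)}}-\Lambda^{j}_{q^{(1)}})$ for $j=1,2,3$: namely $\overrightarrow{w}(T,\cdot)=-(\Lambda^{1}_{q^{(2)}}-\Lambda^{1}_{q^{(1)}})(\overrightarrow{\phi},\overrightarrow{\psi},\overrightarrow{f})$, $\partial_t\overrightarrow{w}(T,\cdot)=-(\Lambda^{2}_{q^{(2)}}-\Lambda^{2}_{q^{(1)}})(\overrightarrow{\phi},\overrightarrow{\psi},\overrightarrow{f})$, and $\partial_\nu\overrightarrow{w}|_\Sigma=-(\Lambda^{3}_{q^{(2)}}-\Lambda^{3}_{q^{(1)}})(\overrightarrow{\phi},\overrightarrow{\psi},\overrightarrow{f})$. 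Bookkeeping the signs reproduces exactly \eqref{intid2}.

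The main obstacle I anticipate is not the algebra but justifying the spatial Green formula on the unbounded cylinder $\Omega=\omega\times\R$: one must verify that integration by parts in the $x_3$-direction produces no contribution as $|x_3|\to\infty$ and that every volume and surface integral converges. This is precisely where the functional framework of Theorem \ref{thm:recons_1} for $\overrightarrow{w}$, combined with the decay of the adjoint solutions $\overrightarrow{v}$ to which the identity is applied (the geometric optics solutions of Lemma \ref{gocons5}, whose profiles are of Schwartz class in $x_3$), is invoked — approximating by functions compactly supported in $x_3$ and passing to the limit. The only other point requiring care is the transpose bookkeeping $(q^{(2)}\overrightarrow{w})\cdot\overrightarrow{v}=\overrightarrow{w}\cdot((q^{(2)})^{T}\overrightarrow{v})$ that pins down the correct adjoint $\mathcal{L}_{q^{(2)}}^{*}$.
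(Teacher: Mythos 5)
Your proposal is correct and follows essentially the same route as the paper: form the difference of the two solutions (you use $\overrightarrow{u}^{(1)}-\overrightarrow{u}^{(2)}$, the paper uses $\overrightarrow{u}^{(2)}-\overrightarrow{u}^{(1)}$, a purely cosmetic sign choice you track correctly), observe it solves $\mathcal{L}_{q^{(2)}}(\cdot)=\pm q\,\overrightarrow{u}^{(1)}$ with vanishing Cauchy and lateral data, pair against an adjoint solution, integrate by parts, and read off the surviving boundary terms through the components of $\Lambda_{q^{(k)}}$. Your added concern about justifying Green's formula on the unbounded cylinder via the decay of the GO profiles in $x_3$ is a reasonable refinement of a point the paper passes over silently, but it does not change the argument.
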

\begin{proof}
We define $\overrightarrow {u} = \overrightarrow {u}^{(2)} - \overrightarrow {u}^{(1)}$ and consequently, $\overrightarrow {u}$ satisfies the following IBVP:
\begin{equation}\begin{cases}\label{u-solves}
    \mathcal{L}_{q^{(2)}}\overrightarrow {u} = q(t,x) \overrightarrow {u}^{(1)}, &~\text{in}~ \Omega_T\\
    \overrightarrow {u}(0,\cdot)= \overrightarrow {0}, \, \partial_t \overrightarrow {u}(0,\cdot) = \overrightarrow {0}, &~\text{in}~\Omega\\
    \overrightarrow {u} = \overrightarrow {0}, &~\text{on}~  \Sigma.
\end{cases}
\end{equation} 
With the help of integration by parts, we find that
\begin{align}\label{inteqn}
    \int_{\Omega_{T}} & {\mathcal{L}_{q^{(2)}}\overrightarrow {u}(t,x) \cdot \overrightarrow {v}(t,x)}\,dt\,dx  = \int_{\Omega}{\left[\partial_{t}\overrightarrow {u}(T,x)\cdot \overrightarrow {v}(T,x) - \partial_{t}\overrightarrow {u}(0,x)\cdot \overrightarrow {v}(0,x)\right]}\,dx \nonumber\\
&  - \int_{\Omega}{[\overrightarrow {u}(T,x)\cdot\partial_{t}\overrightarrow {v}(T,x) - \overrightarrow {u}(0,x)\cdot \partial_{t}\overrightarrow {v}(0,x)]}\,dx
 + \int_{\Omega_{T}}{\overrightarrow {u}(t,x ) \cdot \mathcal{L}_{q^{(2)}}^{*}\overrightarrow {v}(t,x)}\,dt\,dx \nonumber\\
    &  -\int_{\Sigma}{[\partial_{\nu}\overrightarrow {u}(t,x)\cdot \overrightarrow {v}(t,x) - \overrightarrow {u}\cdot \partial_{\nu}\overrightarrow {v}(t,x)]}\,dt\,dS_{x}.
\end{align}
Since $\overrightarrow {u}(T,x)= \overrightarrow {u}^{(2)}(T,x)-\overrightarrow {u}^{(1)}(T,x)=\Big(\Lambda^{1}_{q^{(2)}}-\Lambda^{1}_{q^{(1)}}\Big)(\overrightarrow {\phi},\overrightarrow {\psi},\overrightarrow {f})$, and in a similar manner, incorporating the other components of $\Lambda_{q^{(k)}}$, using \eqref{u-solves}, the equation \eqref{inteqn} provides the integral identity \eqref{intid2}.
\end{proof}
The following result follows from the Proposition \ref{intid2}.
\begin{corollary}\label{equal io map}
{Let $\displaystyle q^{(k)}:=((q^{(k)}_{ij}))_{1\leq i,j\leq n}$ for $k=1,2$  be two sets of matrix potentials with  with $q^{(1)}_{ij},q^{(2)}_{ij}\in W^{1,\infty}(\Omega_T)$ for all $1\leq i,j\leq n$ and $\|q^{(k)}\|\leq M,$ for $k=1,2$.
Let $\overrightarrow {u}^{(k)}$ be the solution of \eqref{hprs} corresponding to the matrix potential $q= q^{(k)}$ and $\Lambda_{q^{(k)}}$ be the given IO map defined by \eqref{input-output map} corresponding to $\overrightarrow {u}^{(k)}$.}
 Further, assume that $\Lambda_{q^{(1)}} = \Lambda_{q^{(2)}}$, then the following integral identity holds
\begin{equation}\label{intid1}
    \int_{\Omega_{T}}{ \left(q(t,x)\overrightarrow {u}^{(1)}(t,x)\right)\cdot \overrightarrow {v}(t,x)\,dt\,dx } = 0,
\end{equation}
where $\overrightarrow {v}$ satisfies the adjoint equation $\mathcal{L}_{q^{(2)}}^{*}\overrightarrow {v}(t,x) = \overrightarrow {0}$ in $\Omega_T$.
\end{corollary}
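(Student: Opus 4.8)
The plan is to read the result straight off the integral identity \eqref{intid2} established in Proposition \ref{propintid}. The first point I would make is that the derivation of \eqref{intid2} — carried out via the integration-by-parts formula \eqref{inteqn} applied to $\overrightarrow{u} = \overrightarrow{u}^{(2)} - \overrightarrow{u}^{(1)}$ together with the IBVP \eqref{u-solves} — never actually uses the hypothesis $\Lambda_{q^{(1)}} \neq \Lambda_{q^{(2)}}$; that inequality is recorded in the statement of the proposition only for its later role in the stability analysis. Consequently \eqref{intid2} is valid for every admissible datum $(\overrightarrow{\phi},\overrightarrow{\psi},\overrightarrow{f}) \in \mathbf{L}$ regardless of whether the two input-output maps coincide, and the corollary is obtained by specializing to the degenerate case.

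With this observation, I would impose $\Lambda_{q^{(1)}} = \Lambda_{q^{(2)}}$. Through the componentwise notation \eqref{io map comp}, equality of the full maps forces $\Lambda^{j}_{q^{(1)}}(\overrightarrow{\phi},\overrightarrow{\psi},\overrightarrow{f}) = \Lambda^{j}_{q^{(2)}}(\overrightarrow{\phi},\overrightarrow{\psi},\overrightarrow{f})$ for each $j \in \{1,2,3\}$ and every input. Hence all three differences $\Lambda^{j}_{q^{(2)}} - \Lambda^{j}_{q^{(1)}}$ on the right-hand side of \eqref{intid2} vanish identically, so the lateral boundary integral and both terminal-time integrals collapse to zero, leaving exactly the identity \eqref{intid1}.

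If one prefers a self-contained derivation rather than a reuse of \eqref{intid2}, I would instead translate the equality of the maps, via \eqref{input-output map}--\eqref{io map comp}, into the conditions $\overrightarrow{u}(T,\cdot) = \overrightarrow{0}$, $\partial_t \overrightarrow{u}(T,\cdot) = \overrightarrow{0}$ and $\partial_\nu \overrightarrow{u}|_{\Sigma} = \overrightarrow{0}$, which supplement the homogeneous Cauchy and Dirichlet data at $t=0$ and on $\Sigma$ already present in \eqref{u-solves}. Substituting these into \eqref{inteqn} and using $\mathcal{L}_{q^{(2)}}^{*}\overrightarrow{v} = \overrightarrow{0}$ annihilates the volume term containing $\overrightarrow{v}$, both pairs of initial/terminal integrals, and both surface integrals, so only $\int_{\Omega_T} \mathcal{L}_{q^{(2)}}\overrightarrow{u}\cdot\overrightarrow{v}\,dt\,dx$ survives; by the first line of \eqref{u-solves} this equals $\int_{\Omega_T}(q\,\overrightarrow{u}^{(1)})\cdot\overrightarrow{v}\,dt\,dx$, giving \eqref{intid1}.

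I do not anticipate a genuine obstacle: the corollary is a specialization of Proposition \ref{propintid}, and its whole content is the bookkeeping verifying that the three input-output map differences vanish under the equality hypothesis. The only point meriting care — and the step I would state explicitly — is the remark that \eqref{intid2} does not depend on the $\Lambda_{q^{(1)}} \neq \Lambda_{q^{(2)}}$ assumption, so that it may legitimately be invoked in the coincident case.
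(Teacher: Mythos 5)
Your proposal is correct and follows essentially the same route as the paper, which simply derives the corollary as a specialization of Proposition \ref{propintid}: when $\Lambda_{q^{(1)}}=\Lambda_{q^{(2)}}$, all three difference terms on the right-hand side of \eqref{intid2} vanish, leaving \eqref{intid1}. Your explicit remark that the derivation of \eqref{intid2} never uses the hypothesis $\Lambda_{q^{(1)}}\neq\Lambda_{q^{(2)}}$ (so the identity may be invoked in the coincident case) is a point the paper leaves implicit, and stating it is a welcome clarification rather than a deviation.
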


To prove the Theorem \ref{main result}, we need the following lemma in which using the GO solutions, an estimate on the difference of matrix potentials $q^{(1)}$ and $q^{(2)}$  is derived.
\begin{lemma}\label{lemma 1 for stab}
Let $q^{(1)},q^{(2)}\in W^{1,\infty}(\Omega_T)$, with $\|q^{(j)}\|\leq M, j=1,2,$ and let matrix potential $q$ of size $n\times n$ be equal to $q^{(1)}-q^{(2)}$ and extended by zero outside of $\Omega_T$ . Then, for all $\overrightarrow {K}, \overrightarrow {K}^{(*)}\in\mathbb{R}^n$, $\theta \in  \mathbb{S}^1$, $h \in \mathcal{S}(\mathbb{R})$  and $\varphi\in \mathcal{C}_{0}^{\infty}(\mathbb{R}^{2})$ we have 
\begin{equation}\label{first ineq for stab}
  \begin{split}
\biggr|\int_{\mathbb{R}}&\int_{\mathbb{R}^2}\int_{0}^{T}\Big(q(t,x',x_3)\overrightarrow {K}\cdot\overrightarrow {K}^{(*)}\Big)\varphi^{2}(x'+ t\theta)h^2(x_3)\,dt\,dx'\,dx_3\biggr|\\ 
& \leq C\bigg(\frac{1}{\rho} + \rho^4 \|\Lambda_{q^{(2)}} - \Lambda_{q^{(1)}}\| \bigg)\|\varphi\|^2_{H^{3}(\mathbb{R}^{2})}\|h\|^2_{H^{2}(\mathbb{R})},
  \end{split}
\end{equation}
for any $\rho >1$ sufficiently large.  Here the constant $C>0$ depends on $\Omega, T, M, \overrightarrow {K}$ and $\overrightarrow {K}^{(*)}$.
\end{lemma}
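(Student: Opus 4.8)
The strategy is to feed a matched pair of geometric optics solutions from Lemma~\ref{gocons5} into the integral identity of Proposition~\ref{propintid}, and then to read off the weighted integral of $q$ from the leading, non-oscillatory contribution. First I would construct the two solutions. Applying Lemma~\ref{gocons5} to the potential $q^{(1)}$ with the given $(\theta,h,\varphi)$ and constant vector $\overrightarrow{K}^{+}=\overrightarrow{K}$ yields the forward solution
\[
\overrightarrow{u}^{(1)}(t,x)=e^{i\rho(x'\cdot\theta+t)}\varphi(x'+t\theta)h(x_3)\overrightarrow{K}+\overrightarrow{\Psi}^{+}(t,x;\rho)
\]
of $\mathcal{L}_{q^{(1)}}\overrightarrow{u}^{(1)}=\overrightarrow{0}$. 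Since the formal adjoint of $\mathcal{L}_{q^{(2)}}$ is $\mathcal{L}_{q^{(2)}}^{*}\overrightarrow{v}=\square\overrightarrow{v}+(q^{(2)})^{\mathsf{T}}\overrightarrow{v}$, and $(q^{(2)})^{\mathsf{T}}\in W^{1,\infty}(\Omega_T)$ obeys the same bound $M$, a second application of Lemma~\ref{gocons5} (this time of the decaying type $\overrightarrow{u}^{-}$) with potential $(q^{(2)})^{\mathsf{T}}$ and vector $\overrightarrow{K}^{-}=\overrightarrow{K}^{(*)}$ produces
\[
\overrightarrow{v}(t,x)=e^{-i\rho(x'\cdot\theta+t)}\varphi(x'+t\theta)h(x_3)\overrightarrow{K}^{(*)}+\overrightarrow{\Psi}^{-}(t,x;\rho),
\]
solving the adjoint equation $\mathcal{L}_{q^{(2)}}^{*}\overrightarrow{v}=\overrightarrow{0}$, with $\overrightarrow{\Psi}^{\pm}$ controlled by \eqref{psi inequ}.

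Next I would substitute this pair into \eqref{intid2}. The integrand on the left, $(q\,\overrightarrow{u}^{(1)})\cdot\overrightarrow{v}$, splits into a principal term and three remainders. In the principal term the phases cancel exactly, $e^{i\rho(x'\cdot\theta+t)}e^{-i\rho(x'\cdot\theta+t)}=1$, leaving $\varphi^{2}(x'+t\theta)\,h^{2}(x_3)\,(q\,\overrightarrow{K}\cdot\overrightarrow{K}^{(*)})$, whose integral over $\Omega_T$ equals the quantity on the left of \eqref{first ineq for stab}; here I use that $q$ is extended by zero outside $\Omega_T$, so the $\Omega$–integral coincides with the integral over $x'\in\mathbb{R}^{2}$, $x_3\in\mathbb{R}$. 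The three remainders each pair $q$ with at least one factor $\overrightarrow{\Psi}^{\pm}$; by the Cauchy–Schwarz inequality, the bound $\|q\|_{L^{\infty}(\Omega_T)}\le 2M$, and the $L^2$ estimate \eqref{psi inequ} (which carries the gain $\rho^{-1}$), each of them is bounded by $C\rho^{-1}\|\varphi\|^{2}_{H^{3}(\mathbb{R}^{2})}\|h\|^{2}_{H^{2}(\mathbb{R})}$ (the purely quadratic term $(q\overrightarrow{\Psi}^{+})\cdot\overrightarrow{\Psi}^{-}$ in fact decays like $\rho^{-2}$). This accounts for the $1/\rho$ summand in \eqref{first ineq for stab}.

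It remains to estimate the right-hand side of \eqref{intid2}. Each boundary term is a pairing of a component of $\Lambda_{q^{(2)}}-\Lambda_{q^{(1)}}$ applied to the input data of $\overrightarrow{u}^{(1)}$ against a trace of $\overrightarrow{v}$. I would bound the former by $\|\Lambda_{q^{(2)}}-\Lambda_{q^{(1)}}\|$ times $\|(\overrightarrow{\phi},\overrightarrow{\psi},\overrightarrow{f})\|_{\LS}$, and invoke \eqref{u1_at_bdry} to get $\|(\overrightarrow{\phi},\overrightarrow{\psi},\overrightarrow{f})\|_{\LS}\le C\rho^{2}\|\varphi\|_{H^{3}(\mathbb{R}^2)}\|h\|_{H^{2}(\mathbb{R})}$, while the relevant traces of $\overrightarrow{v}$ (namely $\overrightarrow{v}|_{\Sigma}$, $\overrightarrow{v}(T,\cdot)$ and $\partial_{t}\overrightarrow{v}(T,\cdot)$) contribute at most a further factor $\rho^{2}$ after a crude trace/energy estimate on the decaying solution. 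This yields the overall bound $C\rho^{4}\|\Lambda_{q^{(2)}}-\Lambda_{q^{(1)}}\|\,\|\varphi\|^{2}_{H^{3}(\mathbb{R}^2)}\|h\|^{2}_{H^{2}(\mathbb{R})}$. Combining with the remainder estimate and using $\rho>1$ gives \eqref{first ineq for stab}, with $C$ depending on $\Omega,T,M,\overrightarrow{K},\overrightarrow{K}^{(*)}$ once $\|\overrightarrow{K}\|$ and $\|\overrightarrow{K}^{(*)}\|$ are absorbed.

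The main obstacle is the bookkeeping of the powers of $\rho$: one must confirm that the $\overrightarrow{\Psi}^{\pm}$–remainders genuinely gain $\rho^{-1}$ from \eqref{psi inequ}, while the boundary terms lose at most $\rho^{4}$, the latter requiring the data estimate \eqref{u1_at_bdry} together with controlling the traces of the adjoint solution $\overrightarrow{v}$. A secondary point to verify is that Lemma~\ref{gocons5} genuinely applies to the transposed potential $(q^{(2)})^{\mathsf{T}}$, and that the non-conjugated bilinear pairing in \eqref{intid2} is precisely what makes the two opposite phases $e^{\pm i\rho(x'\cdot\theta+t)}$ annihilate, producing the sought non-oscillatory principal term.
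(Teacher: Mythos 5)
Your proposal is correct and follows essentially the same route as the paper's own proof: the same pair of geometric optics solutions from Lemma \ref{gocons5} (the forward one for $q^{(1)}$ with vector $\overrightarrow{K}$ and the decaying one for the adjoint potential with vector $\overrightarrow{K}^{(*)}$) is substituted into the identity \eqref{intid2}, the cancelling phases produce the weighted integral of $q$, the $\overrightarrow{\Psi}^{\pm}$-remainders are controlled by \eqref{psi inequ} to give the $1/\rho$ term, and the boundary terms are bounded by $\rho^{4}\|\Lambda_{q^{(2)}}-\Lambda_{q^{(1)}}\|$ using \eqref{u1_at_bdry} together with the trace bounds on $\overrightarrow{v}$, exactly as in \eqref{Rrho}--\eqref{IIq estim}.
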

\begin{proof}
With reference to the Lemma \ref{gocons5}, for each $j=1,2$, we can choose the GO solutions of
$$\partial^{2}_{t}\overrightarrow {u}^{(j)}_{\rho}(t,x) - \Delta \overrightarrow {u}^{(j)}_{\rho}(t,x) + q^{(j)}(t,x)\overrightarrow {u}^{(j)}_{\rho}(t,x) = \overrightarrow {0},\qquad (t,x)\in\Omega_T,$$
in the following form:
\begin{align}\label{GO_sol_u_i}
   \overrightarrow {u}^{(j)}_{\rho}(t,x',x_3;\rho) = \overrightarrow {K}\,\varphi(x'+ t\theta)h(x_{3})e^{ i\rho(x' \cdot \theta + t)} +\overrightarrow {\Psi}^{(j)}(t,x\,;\rho),
\end{align}
where $\overrightarrow {\Psi}^{(j)},~j=1,2$ satisfies
\begin{equation*}
\begin{split}
    \overrightarrow {\Psi}^{(j)}(0,x\,;\rho)&= \partial_{t}\overrightarrow {\Psi}^{(j)}(0,x\,;\rho) = \overrightarrow {0},\qquad  x\in \Omega,\\
\overrightarrow {\Psi}^{(j)}(t,x\,;\rho)& = \overrightarrow {0},\qquad (t,x)\in\Sigma. 
\end{split}
\end{equation*}
Let us define $\overrightarrow {u}_{\rho}=\overrightarrow {u}^{(2)}_{\rho}-\overrightarrow {u}^{(1)}_{\rho}$. We observe that $\overrightarrow {u}_{\rho}$ satisfies the IBVP \eqref{u-solves}. Let $(q^{(2)})^*$ denote the adjoint of the matrix potential $q^{(2)}$. Furthermore, we choose GO solution $\overrightarrow {v}$ of
$$\partial^{2}_{t}\overrightarrow {v}(t,x) - \Delta \overrightarrow {v}(t,x) + (q^{(2)})^*(t,x)\overrightarrow {v}(t,x) = \overrightarrow {0},\qquad (t,x)\in\Omega_T,$$ 
in the following form (cf. Lemma \ref{gocons5})
\begin{align}\label{adjoint_v_go_sol}
   \overrightarrow {v}(t,x',x_3;\rho) = \overrightarrow {K}^{(*)}\,\varphi(x'+ t\theta)h(x_{3})e^{-i\rho(x' \cdot \theta + t)} +\overrightarrow {\Psi}^{(*)}(t,x\,;\rho)
\end{align}
in which $\overrightarrow {\Psi}^{(*)}$ satisfies
\begin{equation*}
\begin{split}
    \overrightarrow {\Psi}^{(*)}(T,x\,;\rho)&= \partial_{t}\overrightarrow {\Psi}^{(*)}(T,x\,;\rho) = \overrightarrow {0},\qquad  x\in \Omega,\\
\overrightarrow {\Psi}^{(*)}(t,x\,;\rho)& = \overrightarrow {0},\qquad (t,x)\in\Sigma. 
\end{split}
\end{equation*}
 Substituting the GO solutions \eqref{GO_sol_u_i}and \eqref{adjoint_v_go_sol} into the integral identity \eqref{intid2}, we get
\begin{equation}\label{integral1}
\begin{split}
\int_{\mathbb{R}}&\int_{\mathbb{R}^2}\int_{0}^{T}\Big(q(t,x',x_3)\overrightarrow {K}\cdot\overrightarrow {K}^{(*)}\Big)\varphi^{2}(x'+ t\theta)h^2(x_3)\,dt\,dx'\,dx_3\\ =&\int_{\Omega_T}R_{\rho}\,dt\,dx- \int_{\Sigma} \big(\Lambda^{3}_{q^{(2)}}-\Lambda^{3}_{q^{(1)}}\Big)(\overrightarrow {\phi}_{\rho},\overrightarrow {\psi}_{\rho},\overrightarrow {f}_{\rho}) \cdot\overrightarrow {v}~dt~dS\\&+\int_{\Omega}\Big\{\Big(\Lambda^{2}_{q^{(2)}}-\Lambda^{2}_{q^{(1)}}\Big)(\overrightarrow {\phi}_{\rho},\overrightarrow {\psi}_{\rho},\overrightarrow {f}_{\rho})\cdot\overrightarrow {v}(T,\cdot)-\partial_{t}\overrightarrow {v}(T,\cdot)\cdot\Big(\Lambda^{1}_{q^{(2)}}-\Lambda^{1}_{q^{(1)}}\Big)(\overrightarrow {\phi}_{\rho},\overrightarrow {\psi}_{\rho},\overrightarrow {f}_{\rho})\Big\}dx,
\end{split}
\end{equation}
where $R_{\rho}$ is given by
\begin{align*}
   R_\rho(t,x',x_3)&=-q(t,x',x_3)\Big(~\overrightarrow {K}^{(*)}\cdot\overrightarrow {\Psi}^{(1)}(t,x\,;\rho)\varphi(x'+t\theta)h(x_{3})e^{- i\rho(x' \cdot \theta + t)}\\
        &+\overrightarrow {K}\cdot\overrightarrow {\Psi}^{(*)}(t,x\,;\rho)\varphi(x'+ t\theta)h(x_{3})e^{i\rho(x' \cdot \theta + t)}+\overrightarrow {\Psi}^{(1)}(t,x',x_3)\cdot\overrightarrow {\Psi}^{(*)}(t,x',x_3)\Big).
\end{align*}
Using the H\"{o}lder's inequality and the estimate \eqref{psi inequ}, we have
  \begin{align}\label{Rrho}
      \int_{\Omega_T} |R_\rho| \,dx\,dt \leq \frac{C}{\rho} \max\big(1,\lVert \overrightarrow {K}\rVert^2_{\mathbb{R}^n}, \lVert \overrightarrow {K}^{(*)} \rVert^2_{\mathbb{R}^n}\big)\|q\|_{L^{\infty}(\Omega_T)}\|\varphi\|^2_{H^3(\mathbb{R}^2)}\|h\|^2_{H^2(\mathbb{R})},
      \end{align}
  where the constant $C>0$ depends on $\omega,T$. 
  Moreover, again using the H\"{o}lder's inequality and the Minkowski's inequality, we have 
  \begin{align}\label{estimates_for_input_component}
    \int_{\Omega}&\Big\{\Big(\Lambda^{2}_{q^{(2)}}-\Lambda^{2}_{q^{(1)}}\Big)(\overrightarrow {\phi}_{\rho},\overrightarrow {\psi}_{\rho},\overrightarrow {f}_{\rho})\cdot\overrightarrow {v}(T,\cdot)- \partial_{t}\overrightarrow {v}(T,\cdot)\cdot\Big(\Lambda^{1}_{q^{(2)}}-\Lambda^{1}_{q^{(1)}}\Big)(\overrightarrow {\phi}_{\rho},\overrightarrow {\psi}_{\rho},\overrightarrow {f}_{\rho})\Big\}dx\nonumber\\
    & \qquad- \int_{\Sigma} \Big(\Lambda^{3}_{q^{(2)}}-\Lambda^{3}_{q^{(1)}}\Big)(\overrightarrow {\phi}_{\rho},\overrightarrow {\psi}_{\rho},\overrightarrow {f}_{\rho}) \cdot\overrightarrow {v}~dt~dS\nonumber\\
    & \leq \|\Big(\Lambda^{2}_{q^{(2)}}-\Lambda^{2}_{q^{(1)}}\Big)(\overrightarrow {\phi}_{\rho},\overrightarrow {\psi}_{\rho},\overrightarrow {f}_{\rho})\|_{L^2(\Omega)}\|\overrightarrow {v}(T,\cdot)\|_{\LS^2(\Omega)}\nonumber\\
    & \qquad +\|\Big(\Lambda^{3}_{q^{(2)}}-\Lambda^{3}_{q^{(1)}}\Big)(\overrightarrow {\phi}_{\rho},\overrightarrow {\psi}_{\rho},\overrightarrow {f}_{\rho})\|_{L^2(\Sigma)}\|\overrightarrow {v}\|_{\LS^2(\Sigma)}\nonumber\\
    & \qquad+\|\Big(\Lambda^{1}_{q^{(2)}}-\Lambda^{1}_{q^{(1)}}\Big)(\overrightarrow {\phi}_{\rho},\overrightarrow {\psi}_{\rho},\overrightarrow {f}_{\rho})\|_{L^2(\Omega)}\|\partial_t\overrightarrow {v}(T,\cdot)\|_{\LS^2(\Omega)}\nonumber\\
     &\leq\Big(\|\Big(\Lambda^{1}_{q^{(2)}}-\Lambda^{1}_{q^{(1)}}\Big)(\overrightarrow {\phi}_{\rho},\overrightarrow {\psi}_{\rho},\overrightarrow {f}_{\rho})\|^2_{H^1(\Omega)}+\|\Big(\Lambda^{2}_{q^{(2)}}-\Lambda^{2}_{q^{(1)}}\Big)(\overrightarrow {\phi}_{\rho},\overrightarrow {\psi}_{\rho},\overrightarrow {f}_{\rho})\|^2_{L^2(\Omega)}\nonumber\\
     &~+\|\Big(\Lambda^{3}_{q^{(2)}}-\Lambda^{3}_{q^{(1)}}\Big)(\overrightarrow {\phi}_{\rho},\overrightarrow {\psi}_{\rho},\overrightarrow {f}_{\rho})\|^2_{L^2(\Sigma)}\Big)^{\frac{1}{2}} \Big(\|\overrightarrow {v}(T,\cdot)\|^2_{\mathbf{L}^2(\Omega)}+\|\partial_t\overrightarrow {v}(T,\cdot)\|^2_{\mathbf{L}^2(\Omega)}+\|\overrightarrow {v}\|^2_{\mathbf{L}^2(\Sigma)}\Big)^{\frac{1}{2}} \nonumber\\
    &=\|\Big(\Lambda_{q^{(2)}}-\Lambda_{q^{(1)}}\Big)(\overrightarrow {\phi}_{\rho},\overrightarrow {\psi}_{\rho},\overrightarrow {f}_{\rho})\|_{\mathbf{H}^1(\Omega)\times \mathbf{L}^2(\Omega)\times \mathbf{L}^2(\Sigma)} \|\overrightarrow {g}_{\rho}\|_{\mathbf{L}^2(\Omega)\times \mathbf{L}^2(\Omega)\times \mathbf{L}^2(\Sigma)},
\end{align}
  where $(\overrightarrow {\phi}_{\rho},\overrightarrow {\psi}_{\rho},\overrightarrow {f}_{\rho})= \Big(\overrightarrow {u}^{(1)}_{\rho}\Big|_{t=0},\partial_t\overrightarrow {u}^{(1)}_{\rho}\Big|_{t=0},\overrightarrow {u}^{(1)}_{\rho}\Big|_{\Sigma}\Big)$ and $\overrightarrow {g}_{\rho}$ is given by 
  $$\overrightarrow {g}_{\rho}=\Big(\overrightarrow {v}(T,\cdot),\partial_t\overrightarrow {v}(T,\cdot),\overrightarrow {v}|_{\Sigma}\Big).$$ 
  Since we assume $\rho>1$ and by the prescribed data $(\overrightarrow {\phi}_{\rho},\overrightarrow {\psi}_{\rho},\overrightarrow {f}_{\rho})$, we deduce
  \begin{align}\label{I esti}
    \|\Big(\Lambda_{q^{(2)}}&-\Lambda_{q^{(1)}}\Big)(\overrightarrow {\phi}_{\rho},\overrightarrow {\psi}_{\rho},\overrightarrow {f}_{\rho})\|_{\mathbf{H}^1(\Omega)\times \mathbf{L}^2(\Omega)\times \mathbf{L}^2(\Sigma)} \nonumber\\
    &\leq\|\Lambda_{q^{(2)}}-\Lambda_{q^{(1)}}\|\,\,\|(\overrightarrow {\phi}_{\rho},\overrightarrow {\psi}_{\rho},\overrightarrow {f}_{\rho})\|_{\mathbf{H}^1(\Omega)\times \mathbf{L}^2(\Omega)\times \mathbf{L}^2(\Sigma)},\nonumber\\ 
    &\leq\|\Lambda_{q^{(2)}}-\Lambda_{q^{(1)}}\|\,\,\Big(\|\overrightarrow {u}^{(1)}_{\rho}(0,\cdot)\|_{\mathbf{H}^1(\Omega)}+\|\partial_t\overrightarrow {u}^{(1)}_{\rho}(0,\cdot)\|_{\mathbf{L}^2(\Omega)}+\|\overrightarrow {u}^{(1)}_{\rho}|_{\Sigma}\|_{\mathbf{L}^2(\Sigma)}\Big)\nonumber\\
    &\leq C \rho^2\|\Lambda_{q^{(2)}}-\Lambda_{q^{(1)}}\|\,\Big(\|\varphi\|_{H^{3}(\mathbb{R}^{2})}\|h\|_{H^{2}(\mathbb{R})}\Big),
\end{align}
  where we have used 
   \eqref{u1_at_bdry}, \eqref{GO_sol_u_i}and \eqref{adjoint_v_go_sol} respectively. 
Moreover, using \eqref{adjoint_v_go_sol} and \eqref{u1_at_bdry} we get
  \begin{equation}\label{v at T and bdry}
      \begin{split}
          \|\overrightarrow {g}_{\rho}\|_{\mathbf{H}^1(\Omega)\times \mathbf{L}^2(\Omega)\times \mathbf{L}^2(\Sigma)}&\leq  \,\,\Big(\|\overrightarrow {v}_{|t=T}\|_{\mathbf{L}^2(\Omega)}+\|\partial_t\overrightarrow {v}_{|t=T}\|_{\mathbf{L}^2(\Omega)}+\|\overrightarrow {v}_{|\Sigma}\|_{\mathbf{L}^2(\Sigma)}\Big)\\
          &\leq C\, \rho^2\,\, \|\varphi\|_{H^{3}(\mathbb{R}^{2})}\|h\|_{H^{2}(\mathbb{R})}.
      \end{split}
  \end{equation}
By substituting the estimates obtained in \eqref{I esti} and \eqref{v at T and bdry} into \eqref{estimates_for_input_component}, it becomes
  \begin{equation}\label{IIq estim}
      \begin{split}
        \int_{\Omega}&\Big\{\Big(\Lambda^{2}_{q^{(2)}}-\Lambda^{2}_{q^{(1)}}\Big)(\overrightarrow {\phi}_{\rho},\overrightarrow {\psi}_{\rho},\overrightarrow {f}_{\rho})\cdot\overrightarrow {v}(T,\cdot)-\partial_{t}\overrightarrow {v}(T,\cdot)\cdot\Big(\Lambda^{1}_{q^{(2)}}-\Lambda^{1}_{q^{(1)}}\Big)(\overrightarrow {\phi}_{\rho},\overrightarrow {\psi}_{\rho},\overrightarrow {f}_{\rho})\Big\}dx\\ &\qquad - \int_{\Sigma} \big(\Lambda^{3}_{q^{(2)}}-\Lambda^{3}_{q^{(1)}}\Big)(\overrightarrow {\phi}_{\rho},\overrightarrow {\psi}_{\rho},\overrightarrow {f}_{\rho}) \cdot\overrightarrow {v}~dt~dS
        \\&\leq C \rho^4\|\Lambda_{q^{(2)}}-\Lambda_{q^{(1)}}\|\,\Big(\|\varphi\|^2_{H^{3}(\mathbb{R}^{2})}\|h\|^2_{H^{2}(\mathbb{R})}\Big).
      \end{split}
  \end{equation}
Combining the estimates \eqref{Rrho} and \eqref{IIq estim} together in \eqref{integral1}, we have the required estimate \eqref{first ineq for stab}. This completes the proof.
\end{proof}

\begin{remark}
To obtain the stability estimate of the matrix potential, we look to estimate $\|q_{ij}\|_{L^\infty(\Omega_T)}$, $i,j=1,2,\dots,n$. Hereby we need to choose the constant vectors $\overrightarrow {K}$ and $\overrightarrow {K}^{(*)}$ appropriately.
For $i,j\in\{1,2,\dots,n\}$, let us consider $\overrightarrow {K}, \overrightarrow {K}^{(*)}\in \mathbb{R}^n$ as follows:
\begin{equation*}
    \overrightarrow {K}=\left( \delta_{ir} \right)^n_{r=1},\,\,\quad \overrightarrow {K}^{(*)}=\left( \delta_{jr}\right)^n_{r=1},
\end{equation*}
where $\delta_{kr}$ represents the Kronecker delta function for non-negative integers $k$ and $r$.
As a consequence, we obtain
\begin{equation}\label{qij}
    q\overrightarrow {K}\cdot\overrightarrow {K}^{(*)}= \,q_{ij}.  
\end{equation}
\end{remark}
\subsection{Light-ray transform}
Afterwards, we introduce the light ray transform \cite{Stefanov LRT} of $(t,x')\mapsto q_{ij}(\cdot,\cdot,x_3)$ by fixing $x_3\in\R$.
Let $g\in L^1(\R^{4})$ be arbitrary. More precisely, we define the light ray transform $\mathcal{L}$ of a function $g(\cdot,\cdot,x_3)$ as
$$\mathcal{L}[g(\cdot,\cdot,x_3)](\theta,x'):=\int_{\R}g(t,\,x'-t\theta,\,x_3)\,dt,\quad x'\in\mathbb{R}^2,\,\theta\in\mathbb{S}^1.$$
 Suppose that 
    \begin{align}\label{extending_q_outside_of_the_domain}
        q(t,x):= {q^{(1)}}(t,x) - {q^{(2)}}(t,x), \quad (t,x)\in \Omega_T,
    \end{align}
    and it is extended by zero outside the domain ${\Omega_T}$.
Hence for $i,\,j\in \{1,2,\dots,n\}$, we have 
$$\mathcal{L}[q_{ij}(\cdot,\cdot,x_3)](\theta,x')=\int_{\mathbb{R}}q_{ij}\,(t,\,x'-t\theta,\,x_3)\,dt.$$
\begin{lemma}\label{lem:esti_q}
   Let $q$ be as in Lemma \ref{lemma 1 for stab} and $y_3\in\R$ be fixed. Then, there exist $C>0,\,\beta>0, \delta >0, \text{and}\, \rho_0>0$ such that for all $\theta\in  \mathbb{S}^1,$ we have 
   \begin{equation}\label{estimate on ray R}
       \Big| \mathcal{L}[q_{ij}(\cdot,\cdot,y_3)](\theta,y')\Big|\leq C \left(\rho^{\beta}\|\Lambda_{q^{(2)}} - \Lambda_{q^{(1)}}\| + \frac{1}{\rho^{\delta}}\right),\qquad \text{a.e.}~ y'\in\mathbb{R}^{2},
   \end{equation}
   for any $\rho\geq\rho_0$ and the constant $C$ depends only on $\Omega,\,T$ and $M$.
\end{lemma}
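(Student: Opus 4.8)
The plan is to specialize Lemma \ref{lemma 1 for stab} to the single entry $q_{ij}$, rewrite its left-hand side as an integral of the light ray transform against a product of approximate identities, and then extract the pointwise value by sending the concentration scale to infinity while balancing it against $\rho$.

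First I would take $\overrightarrow{K}=(\delta_{ir})_{r=1}^n$ and $\overrightarrow{K}^{(*)}=(\delta_{jr})_{r=1}^n$, so that \eqref{qij} gives $q\overrightarrow{K}\cdot\overrightarrow{K}^{(*)}=q_{ij}$ and \eqref{first ineq for stab} becomes
\[
\left|\int_{\R}\int_{\R^2}\int_0^T q_{ij}(t,x',x_3)\,\varphi^2(x'+t\theta)\,h^2(x_3)\,dt\,dx'\,dx_3\right|
\leq C\Big(\tfrac1\rho+\rho^4\|\Lambda_{q^{(2)}}-\Lambda_{q^{(1)}}\|\Big)\|\varphi\|_{H^3(\R^2)}^2\|h\|_{H^2(\R)}^2.
\]
Since $q_{ij}$ is extended by zero outside $\Omega_T$, the inner integral may be taken over all $t\in\R$; the substitution $y'=x'+t\theta$ (for fixed $t,x_3$, with $dx'=dy'$) together with Fubini then recasts the left-hand side as
\[
\int_{\R}\int_{\R^2} F(\theta,y',x_3)\,\varphi^2(y')\,h^2(x_3)\,dy'\,dx_3,\qquad F(\theta,y',x_3):=\mathcal{L}[q_{ij}(\cdot,\cdot,x_3)](\theta,y').
\]

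Next I would fix $\chi\in\mathcal{C}_0^\infty(\R^2)$ and $\zeta\in\mathcal{S}(\R)$ with $\|\chi\|_{L^2(\R^2)}=\|\zeta\|_{L^2(\R)}=1$ and, for a scale $\lambda>1$, set $\varphi_\lambda(z)=\lambda\,\chi(\lambda(z-y'))$ and $h_\lambda(s)=\lambda^{1/2}\zeta(\lambda(s-y_3))$, so that $\varphi_\lambda^2,h_\lambda^2$ are approximate identities concentrating at $y'$ and $y_3$ at scale $\lambda^{-1}$, with $\|\varphi_\lambda\|_{H^3(\R^2)}^2\le C\lambda^6$ and $\|h_\lambda\|_{H^2(\R)}^2\le C\lambda^4$. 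The crucial regularity input is that $F(\theta,\cdot,\cdot)$ — the integral of the (Lipschitz in $x$, $t$-compactly supported) function $q_{ij}$ along a line — is uniformly continuous in $(y',x_3)$ with a polynomial modulus and a constant depending only on $\Omega,T,M$; the only degeneration of Lipschitz behaviour comes from lines grazing $\partial\Omega_T$, which still yield a uniform Hölder-$\tfrac12$ bound. Hence at a.e.\ $y'$ and the fixed $y_3$,
\[
\Big|\int_{\R}\int_{\R^2} F\,\varphi_\lambda^2\,h_\lambda^2\,dy'\,dx_3-F(\theta,y',y_3)\Big|\le C\lambda^{-1/2}.
\]

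Combining the two displays, for a.e.\ $y'$ we obtain
\[
|F(\theta,y',y_3)|\le C\Big(\tfrac1\rho+\rho^4\|\Lambda_{q^{(2)}}-\Lambda_{q^{(1)}}\|\Big)\lambda^{10}+C\lambda^{-1/2}.
\]
Choosing $\lambda=\rho^{s}$ with $0<s<\tfrac1{10}$ and $\rho\ge\rho_0$ large, the terms $\rho^{10s-1}$ and $\rho^{-s/2}$ decay, leaving $C\rho^{\,10s+4}\|\Lambda_{q^{(2)}}-\Lambda_{q^{(1)}}\|$ as the remaining contribution; this is precisely \eqref{estimate on ray R} with $\beta=10s+4$ and $\delta=\min(1-10s,\,s/2)$. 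The main obstacle is the regularity step: showing that $F$ has a uniform (in $y'$) modulus of continuity despite the jump of the zero-extended $q_{ij}$ across $\partial\Omega_T$, so that the mollification error is a fixed negative power of $\lambda$ with constant depending only on $\Omega,T,M$ — the grazing lines, where Lipschitz control degenerates to Hölder-$\tfrac12$, forming the null set behind the stated a.e.\ $y'$ conclusion with a uniform $C$.
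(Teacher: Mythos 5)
Your overall strategy is exactly the paper's: specialize $\overrightarrow {K}=(\delta_{ir})_{r=1}^n$, $\overrightarrow {K}^{(*)}=(\delta_{jr})_{r=1}^n$ to isolate $q_{ij}$ via \eqref{qij}, test the estimate \eqref{first ineq for stab} against approximate identities concentrating at $(y',y_3)$, pay the price $\lambda^{10}$ coming from $\|\varphi_\lambda\|^2_{H^3(\R^2)}\|h_\lambda\|^2_{H^2(\R)}$, and balance the concentration scale against $\rho$ (the paper writes $\varepsilon=\lambda^{-1}$ and chooses $\varepsilon^{11}=\rho^{-1}$, i.e.\ your $s=1/11$). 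Where you deviate is the error (modulus-of-continuity) term, and that is precisely where the gap sits. The paper never invokes any regularity of the ray transform: it writes $q_{ij}(t,y'-t\theta,y_3)=\int_{\R}\int_{\R^2} q_{ij}(t,y'-t\theta,y_3)\,\varphi_\varepsilon^2(x')h_\varepsilon^2(x_3)\,dx'\,dx_3$, compares the integrand pointwise with $q_{ij}(t,x'-t\theta,x_3)$, and uses the Lipschitz bound $|q_{ij}(t,y'-t\theta,y_3)-q_{ij}(t,x'-t\theta,x_3)|\le C|(y',y_3)-(x',x_3)|$ for the zero-extended potential (asserting $q\in W^{1,\infty}([0,T]\times\R^3)$), so the error is simply $\le C\varepsilon$, uniformly in $y'$ and $\theta$.

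You instead transfer the regularity to $F=\mathcal{L}[q_{ij}(\cdot,\cdot,x_3)]$ and claim a \emph{uniform} H\"older-$\tfrac12$ modulus, acknowledging that the zero extension jumps across $\partial\Omega_T$ but asserting that grazing lines still give a uniform $\lambda^{-1/2}$ mollification error. This claim is unproven, and it is false in the stated generality: $\omega$ is only assumed smooth, bounded and connected, so $\partial\omega$ may contain a straight segment; if $\theta$ is parallel to such a segment and $q_{ij}$ does not vanish on it, then $F(\theta,\cdot,x_3)$ has a genuine jump across the corresponding line of $y'$ values, so no uniform modulus of continuity exists at all (even in the strictly convex case, where H\"older-$\tfrac12$ does hold, a chord-length argument is needed and you give none). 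Nor does the ``a.e.\ $y'$'' in the statement absorb the problem: for each fixed $\rho\ge\rho_0$, hence fixed $\lambda=\rho^{s}$, your error bound $C\lambda^{-1/2}$ must hold for a.e.\ $y'$ with $C=C(\Omega,T,M)$, but on the positive-measure strip of $y'$ within distance $\sim\lambda^{-1}$ of a jump of $F$ the mollification error is $O(1)$; so your argument cannot deliver the uniform constant required by \eqref{estimate on ray R} for those $y'$ at that $\rho$. You yourself flag this as ``the main obstacle,'' but it is not a technicality to be deferred — it is the step the proof needs. The repair is to do what the paper does: put the modulus-of-continuity estimate on $q_{ij}$ itself, under the paper's standing interpretation that the zero extension lies in $W^{1,\infty}$ (equivalently, assume the regularity of the extended difference, as the paper does when it writes ``Since $q\in W^{1,\infty}([0,T]\times\R^3)$''); then the error term is a one-line computation and no statement about the ray transform's regularity is needed.
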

\begin{proof}
  Let $\varphi\in \mathcal{C}_{0}^{\infty}(\mathbb{R}^{2})$ and $h\in \mathcal{C}_{0}^{\infty}(\mathbb{R})$ be positive functions which are supported in the unit ball $B(0, 1)$ and $[-1,1]$ respectively. In addition, we consider \,$\|\varphi\|_{L^2(\mathbb{R}^2)}=1$ and $\|h\|_{L^2(\mathbb{R})}=1$. Let us define
  \begin{equation}
  \begin{split}
      \varphi_{\varepsilon}(x'):&=\varepsilon^{-1}\varphi\Big(\frac{x'-y'}{\varepsilon}\Big),\quad x',y'\in\R^2, \\
      h_{\varepsilon}(x_3):&=\varepsilon^{-1/2}h\Big(\frac{x_3-y_3}{\varepsilon}\Big),\quad x_3,y_3\in\R.
  \end{split}\end{equation}
In view of Lemma \ref{lemma 1 for stab}, replacing $\varphi$ and $h$ by $\varphi_{\varepsilon}$ and $h_{\varepsilon}$ respectively, we have
  \begin{equation*}
      \begin{split}
          &\Bigg|\int_0^Tq_{ij}(t,y'-t\theta,y_3)dt\Bigg|=\Bigg|\int_0^T\int_{\mathbb{R}}\int_{\mathbb{R}^2}q_{ij}(t,y'-t\theta,y_3)\varphi^2_{\varepsilon}(x')h^2_{\varepsilon}(x_3)dx'dx_3dt\Bigg|\\
      &\leq \Bigg|\int_0^T\int_{\mathbb{R}}\int_{\mathbb{R}^2}q_{ij}(t,x'-t\theta,x_3)\varphi^2_{\varepsilon}(x')h^2_{\varepsilon}(x_3)dx'dx_3dt\Bigg|\\
      &\qquad+\Bigg|\int_0^T\int_{\mathbb{R}}\int_{\mathbb{R}^2}(q_{ij}(t,y'-t\theta,y_3)-q_{ij}(t,x'-t\theta,x_3))\varphi^2_{\varepsilon}(x')h^2_{\varepsilon}(x_3)dx'dx_3dt\Bigg|.
      \end{split}
  \end{equation*}
Since $q\in W^{1,\infty}([0,T]\times\R^3)$, we have $$|q_{ij}(t,y'-t\theta,y_3)-q_{ij}(t,x'-t\theta,x_3)|\leq C|(y',y_3)-(x',x_3)|.$$ 
Consequently, we obtain
\begin{equation}\label{estimate on ray transform}
    \begin{split}
        \Bigg|\int_0^Tq_{ij}(t,y'-t\theta,y_3)dt\Bigg|\leq C &\Big(\rho^{4}\|\Lambda_{q^{(2)}} -  \Lambda_{q^{(1)}}\| +\frac{1}{\rho}\Big)\|\varphi_\varepsilon\|^2_{H^{3}(\mathbb{R}^{2})}\|h_\varepsilon\|^2_{H^{2}(\mathbb{R})}\\
        & +C\int_{\mathbb{R}}\int_{\mathbb{R}^2}|(y',y_3)-(x',x_3)|\varphi^2_{\varepsilon}(x')h^2_{\varepsilon}(x_3)dx'dx_3.
    \end{split}
\end{equation}
It is straightforward to observe that
\begin{align*}
    &\|\varphi_\varepsilon\|_{H^{3}(\mathbb{R}^{2})}\leq C \varepsilon^{-3},\qquad \|h_\varepsilon\|_{H^{2}(\mathbb{R})}\leq C \varepsilon^{-2},\\
    &\int_{\mathbb{R}}\int_{\mathbb{R}^2}|(y',y_3)-(x',x_3)|\varphi^2_{\varepsilon}(x')h^2_{\varepsilon}(x_3)\,dx'\,dx_3\, \leq C\,\varepsilon .
\end{align*}
As a result, \eqref{estimate on ray transform} reduces to
\begin{equation*}
    \Big|\int_0^Tq_{ij}(t,y'-t\theta,y_3)dt\Big|\leq C \left(\rho^{4}\|\Lambda_{q^{(2)}} -  \Lambda_{q^{(1)}}\| +\frac{1}{\rho}\right)\varepsilon^{-10}+ C\varepsilon.
\end{equation*}
By choosing $\varepsilon$ such a way $\varepsilon=\frac{\varepsilon^{-10}}{\rho}$, there exist constants $\delta>0$ and $\beta>0$ such that
\begin{equation*}
\Big|\mathcal{L}[q_{ij}(\cdot,\cdot,y_3)](\theta,y')\Big|=    \Bigg|\int_{\R}q_{ij}(t,y'-t\theta,y_3)dt\Bigg|\leq C \left(\rho^{\beta}\|\Lambda_{q^{(2)}} - \Lambda_{q^{(1)}}\| + \frac{1}{\rho^{\delta}}\right),~ \text{a.e.}~ y'\in\mathbb{R}^{2}.
\end{equation*}
Hence the result follows.
\end{proof}
Afterwards, we would like to define the Fourier transform acting on the component of the matrix potential and subsequently, our goal is to find its estimate. In this regard, let us fix $y_3\in \mathbb{R}$. We set 
$$E=\Big\{(\tau,\xi)\in\mathbb{R}\times (\mathbb{R}^2\backslash\{(0,0)\}):|\tau|\leq|\xi|\Big\}$$ 
and define the Fourier transform of $q_{ij}(\cdot,\cdot,y_3)\in L^1(\mathbb{R}^{3})$ as
\begin{equation*}
    \widehat{q_{ij}}[(\cdot,\cdot,y_3)](\tau,\xi):=\int_{\mathbb{R}^2}\int_{\mathbb{R}}q_{ij}(t,x',y_3)\,e^{-ix'\cdot\xi}e^{-it\cdot \tau}dt\,dx'.
\end{equation*}
With this, we have the following result:
\begin{lemma}\label{estimate_on_FourierT_of_q}
    There exist constants $C>0$, $\beta>0,\,\delta>0$ and $\rho_0>0$, such that the following estimate holds for any $\rho\geq\rho_0$ and fixed $y_3\in\R$,
    \begin{equation}\label{estimate on fourier}
       \Big|\widehat{q_{ij}}[(\cdot,\cdot,y_3)](\tau,\xi)\Big|\leq C \left(\rho^{\beta}\|\Lambda_{q^{(2)}} - \Lambda_{q^{(1)}}\| + \frac{1}{\rho^{\delta}}\right),\qquad (\tau,\xi)\in E,
   \end{equation}
   where the constant $C$ depends only on $\Omega,\,T$ and $M$.
\end{lemma}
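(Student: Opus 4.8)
The plan is to exploit the Fourier-slice relationship between the light ray transform $\mathcal{L}$ and the space-time Fourier transform $\widehat{q_{ij}}$, and then feed in the pointwise bound on $\mathcal{L}[q_{ij}(\cdot,\cdot,y_3)]$ already furnished by Lemma \ref{lem:esti_q}. First I would compute the two-dimensional Fourier transform in the variable $x'$ of the map $x'\mapsto \mathcal{L}[q_{ij}(\cdot,\cdot,y_3)](\theta,x')$. Writing out the definition and performing, for each fixed $t$, the change of variables $z=x'-t\theta$, the phase $e^{-ix'\cdot\xi}$ factors as $e^{-iz\cdot\xi}e^{-it(\theta\cdot\xi)}$, which yields the identity
\begin{equation*}
\int_{\mathbb{R}^2}\mathcal{L}[q_{ij}(\cdot,\cdot,y_3)](\theta,x')\,e^{-ix'\cdot\xi}\,dx'=\widehat{q_{ij}}[(\cdot,\cdot,y_3)](\theta\cdot\xi,\xi),\qquad \theta\in\mathbb{S}^1,\ \xi\in\mathbb{R}^2.
\end{equation*}
Thus $\widehat{q_{ij}}$ at $(\tau,\xi)$ is recovered from the light ray transform precisely when $\tau$ can be written as $\theta\cdot\xi$ for some unit vector $\theta$.

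Next I would observe that this is exactly possible on the set $E$. Indeed, for $(\tau,\xi)\in E$ we have $\xi\neq(0,0)$ and $|\tau|\le|\xi|$; since the continuous map $\theta\mapsto\theta\cdot\xi$ sweeps out the whole interval $[-|\xi|,|\xi|]$ as $\theta$ ranges over $\mathbb{S}^1$, there is some $\theta=\theta(\tau,\xi)\in\mathbb{S}^1$ with $\theta\cdot\xi=\tau$. For this choice the identity above becomes
\begin{equation*}
\widehat{q_{ij}}[(\cdot,\cdot,y_3)](\tau,\xi)=\int_{\mathbb{R}^2}\mathcal{L}[q_{ij}(\cdot,\cdot,y_3)](\theta,x')\,e^{-ix'\cdot\xi}\,dx'.
\end{equation*}

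It then remains to bound the right-hand side by an $L^1$ estimate in $x'$. Here I would use that $q$ is extended by zero outside $\Omega_T=(0,T)\times\omega\times\mathbb{R}$ with $\omega$ bounded, so that $q_{ij}(t,\cdot,y_3)$ is supported in $\omega$ and $t$ ranges only over $(0,T)$; consequently $x'\mapsto\mathcal{L}[q_{ij}(\cdot,\cdot,y_3)](\theta,x')=\int_0^T q_{ij}(t,x'-t\theta,y_3)\,dt$ is supported in the bounded tube $\bigcup_{0\le t\le T}(\omega+t\theta)$, whose Lebesgue measure is controlled by a constant depending only on $\omega$ and $T$, uniformly in $\theta$. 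Taking absolute values inside the integral and invoking the pointwise bound \eqref{estimate on ray R} of Lemma \ref{lem:esti_q}, which holds with a constant uniform in $\theta$ and a.e.\ $x'$, gives
\begin{equation*}
\big|\widehat{q_{ij}}[(\cdot,\cdot,y_3)](\tau,\xi)\big|\le C\,\Big(\rho^{\beta}\|\Lambda_{q^{(2)}}-\Lambda_{q^{(1)}}\|+\rho^{-\delta}\Big),
\end{equation*}
which is the desired estimate \eqref{estimate on fourier} after absorbing the measure of the support into $C$.

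I expect the only genuinely delicate point to be the uniformity of the constant: one must verify that both the measure of the support of $\mathcal{L}[q_{ij}(\cdot,\cdot,y_3)](\theta,\cdot)$ and the constant in Lemma \ref{lem:esti_q} are independent of the direction $\theta$ (and of $y_3$), so that the resulting bound is uniform over all $(\tau,\xi)\in E$. This follows from the boundedness of $\omega$, but it is the step that should be stated with care. Everything else is a direct consequence of the Fourier-slice identity together with the already-established light ray transform estimate.
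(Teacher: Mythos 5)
Your proposal is correct and follows essentially the same route as the paper: the Fourier--slice identity $\int_{\mathbb{R}^2}\mathcal{L}[q_{ij}(\cdot,\cdot,y_3)](\theta,y')\,e^{-iy'\cdot\xi}\,dy'=\widehat{q_{ij}}[(\cdot,\cdot,y_3)](\theta\cdot\xi,\xi)$, the choice of $\theta\in\mathbb{S}^1$ with $\theta\cdot\xi=\tau$ (which the paper constructs explicitly as $\theta=\frac{\tau}{|\xi|^2}\xi+\sqrt{1-\frac{\tau^2}{|\xi|^2}}\,\zeta$ with $\zeta\perp\xi$, rather than by your intermediate-value argument), and the reduction of the Fourier integral to a bounded set in $y'$ (the paper's ball $B(0,\lambda)$, your tube $\bigcup_{0\le t\le T}(\omega+t\theta)$) so that the pointwise bound of Lemma \ref{lem:esti_q} can be integrated, with uniformity in $\theta$ and $y_3$ coming from the boundedness of $\omega$.
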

\begin{proof}
Let us consider $(\tau,\xi)\in E$ and $\zeta\in\mathbb{S}^1$ be such that $\xi\cdot\zeta=0$. We define $\theta$ as $$\theta=\frac{\tau}{|\xi^2|}\cdot\xi+\sqrt{1-\frac{\tau^2}{|\xi^2|}}\cdot\zeta,$$
so that $\theta\in\mathbb{S}^{1}$ and $\theta\cdot\xi=\tau$.\\
    By the change of variable $x'=y'-t\theta$, we have for all $\xi\in\mathbb{R}^2,\, \theta\in\mathbb{S}^{1}$,
    \begin{equation*}
       \begin{split}
            \int_{\mathbb{R}^2} \mathcal{L}[q_{ij}(\cdot,\cdot,y_3)](\theta,y')\,e^{-iy'\cdot\xi}\,dy'&= \int_{\mathbb{R}^2}\bigg(\int_{\mathbb{R}}q_{ij}\,(t,\,y'-t\theta,\,y_3)\,dt \bigg)\,e^{-iy'\cdot\xi}\,dy'\\
            & = \int_{\mathbb{R}} \int_{\mathbb{R}^2}q_{ij}\,(t,\,x',\,y_3)\,e^{-ix'\cdot\xi}e^{-it\cdot \tau}dt\,dx'\\
            &=\widehat{q_{ij}}[(\cdot,\cdot,y_3)](\tau,\xi).
       \end{split}
    \end{equation*}
    Since $\omega$ is bounded in $\R^2$ and supp $q_{ij}(t,\cdot,y_3) \subset\bar{\omega}$. Hence there exists $\lambda>0$ such that $$\int_{\mathbb{R}^2\cap B(0,\lambda)} \mathcal{L}[q_{ij}(\cdot,\cdot,y_3)](\theta,y')\,e^{-iy'\cdot\xi}\,dy'=\widehat{q_{ij}}[(\cdot,\cdot,y_3)](\tau,\xi).$$
    Taking into account the estimate \eqref{estimate on ray R} in Lemma \ref{lem:esti_q}, we obtain the required estimate of  $\widehat{q_{ij}}(\cdot,\cdot,y_3)$.
\end{proof}
\subsection{Stability estimate}
In this section, we provide the proof of Theorem \ref{main result}. For $\kappa>0$, we denote $B(0,\kappa)=\{x\in\mathbb{R}^{3}: |x|<\kappa\}$. Our approach is similar to the one described in \cite{aicha2015stability} for the scalar potential.
The following lemma will play a crucial role in our further analysis.
\begin{lemma}[\cite{Hadmard 3 circle,vessella1999continuous}]\label{stability_for_analytic_continuation}
    Let $\mathcal{O}$ be a non empty open set of $B(0,1)$, and let $F$ be an analytic function in $B(0,2)$, obeying $$\|\partial^{\gamma}F\|_{L^{\infty}(B(0,2))}\leq \frac{M\,|\gamma|!}{\eta^{|\gamma|}},\quad\quad \forall \, \gamma\in(\mathbb{N}\cup \{0\})^{3}, $$
    for some $M>0$, and $\eta>0$. Then we have $$\|\partial^{\gamma}F\|_{L^{\infty}(B(0,1))}\leq (2M)^{1-\mu}\|\partial^{\gamma}F\|^{\mu}_{L^{\infty}(\mathcal{O})},$$ 
    where $\mu\in(0,1)$ depends on $n,\eta$ and $|\mathcal{O}|$.
\end{lemma}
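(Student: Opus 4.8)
The plan is to read the statement as a quantitative (stable) form of analytic continuation, in the spirit of \cite{Hadmard 3 circle,vessella1999continuous}. Since $\partial^{\gamma}F$ is again analytic and satisfies Cauchy estimates of the same type with $\eta$ replaced by $\eta/2$ (using $(|\beta|+|\gamma|)!\le 2^{|\beta|+|\gamma|}|\beta|!\,|\gamma|!$ to rewrite $\|\partial^{\beta}(\partial^{\gamma}F)\|_{L^{\infty}(B(0,2))}\le M'|\beta|!/(\eta/2)^{|\beta|}$), it suffices to prove the inequality for a single analytic function $G$ obeying $\|\partial^{\beta}G\|_{L^{\infty}(B(0,2))}\le M|\beta|!/\eta^{|\beta|}$ and then apply it to $G=\partial^{\gamma}F$. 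The key point is that the resulting exponent $\mu$ will depend only on $n$, $\eta$ and $|\mathcal{O}|$, and in particular not on $\gamma$, so the same $\mu$ serves all multi-indices.

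First I would promote the real-analytic Cauchy bound to a genuine holomorphic extension. The estimate $\|\partial^{\beta}G\|_{L^{\infty}(B(0,2))}\le M|\beta|!/\eta^{|\beta|}$ forces the Taylor series of $G$ at any $x\in B(0,2)$ to converge on the complex polydisc of polyradius $c\eta$ centred at $x$; summing the majorant $M\sum_{\beta}(3/\eta)^{|\beta|}|z-x|^{\beta}$ and choosing $c$ small shows $G$ extends to a bounded holomorphic function $\tilde G$ on a complex neighbourhood $\mathcal{N}=\{z\in\mathbb{C}^{3}:\operatorname{dist}(z,B(0,2))<r_{0}\}$, with $r_{0}=r_{0}(\eta)>0$ and $\sup_{\mathcal{N}}|\tilde G|\le 2M$. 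This is exactly the outer bound $2M$ appearing in the conclusion.

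The core is then a three-ball (propagation of smallness) inequality. Fixing $x_{0}\in B(0,1)$ and a point $a\in\mathcal{O}$, I would restrict $\tilde G$ to the complex line through $a$ and $x_{0}$, obtaining a one-variable holomorphic function $g$ on a disc contained in $\mathcal{N}$; Hadamard's three-circle theorem for $g$ interpolates the value at $x_{0}$ between the outer bound $2M$ and the small data $\|G\|_{L^{\infty}(\mathcal{O})}$ near $a$, yielding a local estimate $|G(x_{0})|\le (2M)^{1-\mu_{0}}\|G\|_{L^{\infty}(\mathcal{O})}^{\mu_{0}}$ valid whenever $x_{0}$ lies in a fixed ball around $a$. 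To reach an arbitrary $x_{0}\in B(0,1)$ I would chain such local estimates along a finite sequence of overlapping balls joining $\mathcal{O}$ to $x_{0}$ inside $B(0,2)$; iterating the three-circle interpolation multiplies the exponents and produces a global $\mu\in(0,1)$, after which taking the supremum over $x_{0}\in B(0,1)$ gives the claim.

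The hardest part will be the propagation-of-smallness bookkeeping: one must control the geometry of the chain of balls, including those approaching $\partial B(0,1)$, and track how the exponent degrades under iteration so that the final $\mu$ is uniform in $x_{0}$, $a$ and $\gamma$ and depends only on $n$, $\eta$ and $|\mathcal{O}|$. The number of balls in an admissible chain is what must be bounded solely in terms of these quantities; once a uniform three-ball inequality is secured, assembling it into the stated two-constant Hölder-type estimate is routine, and the reduction in the first paragraph then transfers the bound from $G$ to every $\partial^{\gamma}F$.
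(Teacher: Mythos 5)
First, an important point of comparison: the paper gives \emph{no proof} of this lemma at all --- it is quoted, with citations, from Nadira{\u s}vili and Vessella, and is later invoked only through the case $\gamma=0$ (see the step leading to \eqref{stab:temp_1}). So your proposal can only be measured against the standard literature argument, whose broad outline (quantitative holomorphic extension of a real-analytic function with Cauchy-type bounds, followed by Hadamard-type interpolation along complex lines) you have correctly identified.

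There are, however, two genuine gaps. The first is in the core interpolation step. After extending $G$ to a bounded holomorphic $\tilde G$ on a complex neighbourhood of $B(0,2)$, you restrict to the complex line through $a\in\mathcal{O}$ and $x_{0}$ and invoke Hadamard's three-circle theorem, interpolating between the bound $2M$ and ``the small data near $a$''. But the smallness $|G|\le\|G\|_{L^{\infty}(\mathcal{O})}$ is available only at \emph{real} points: the restriction $g(\zeta)=\tilde G\bigl(a+\zeta(x_{0}-a)\bigr)$ is known to be small only on a real segment issuing from $\zeta=0$, not on any disc in $\mathbb{C}$, so the hypotheses of the three-circle theorem are simply not met. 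The correct device is the two-constants theorem with harmonic measure: $\log|g|$ is subharmonic on the stadium-shaped planar domain $\bigl\{\zeta:\ \operatorname{dist}(\zeta,[0,1])<r_{0}/|x_{0}-a|\bigr\}$, bounded by $\log(2M)$ there and by $\log\|G\|_{L^{\infty}(\mathcal{O})}$ on the real interval, and the harmonic measure of that interval at $\zeta=1$ has a positive lower bound $\mu$ depending only on $r_{0}(\eta)$ and the inradius of $\mathcal{O}$. Once this is used, your chaining of balls is unnecessary: since $[a,x_{0}]\subset B(0,1)$ by convexity and $\tilde G$ exists on a uniform complex neighbourhood of $B(0,1)$, a \emph{single} application gives $|G(x_{0})|\le(2M)^{1-\mu}\|G\|^{\mu}_{L^{\infty}(\mathcal{O})}$ with $\mu$ uniform in $x_{0}$ and $a$. (A chaining argument can be made to work, but it needs exactly the same harmonic-measure input at the first step, so it only adds bookkeeping.) Note also that this route makes $\mu$ depend on the inradius of $\mathcal{O}$ rather than on $|\mathcal{O}|$; the dependence on the measure claimed in the statement comes from the finer argument for merely measurable sets in the cited references, and is immaterial for the paper's application.

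The second gap is in your opening reduction. Applying the single-function inequality to $G=\partial^{\gamma}F$ with your rescaled constants yields $\|\partial^{\gamma}F\|_{L^{\infty}(B(0,1))}\le(2M')^{1-\mu}\|\partial^{\gamma}F\|^{\mu}_{L^{\infty}(\mathcal{O})}$ with $M'=M(2/\eta)^{|\gamma|}|\gamma|!$, which is \emph{not} the stated bound with $(2M)^{1-\mu}$; the constant blows up in $\gamma$. This cannot be repaired, because the lemma as printed (same constant $2M$ for every $\gamma$) is in fact false: take $F(x)=M\sin(x_{1}/\eta)$ with $\eta<1/2$. The hypothesis holds, yet
\begin{equation*}
\|\partial_{x_{1}}F\|_{L^{\infty}(B(0,1))}=\frac{M}{\eta},
\qquad
(2M)^{1-\mu}\|\partial_{x_{1}}F\|^{\mu}_{L^{\infty}(\mathcal{O})}
\le(2M)^{1-\mu}\Bigl(\frac{M}{\eta}\Bigr)^{\mu}<\frac{M}{\eta},
\end{equation*}
since $2M<M/\eta$. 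The statement intended by the cited references --- and the only one the paper ever uses --- is the case $\gamma=0$, i.e.\ $\|F\|_{L^{\infty}(B(0,1))}\le(2M)^{1-\mu}\|F\|^{\mu}_{L^{\infty}(\mathcal{O})}$, where the full family of derivative bounds enters only to build the holomorphic extension. Your single-function argument, once the harmonic-measure fix above is made, proves exactly that; the first paragraph of your proposal should simply be deleted rather than patched.
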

\begin{proof}[Proof of Theorem \ref{main result}]
Let us fix $y_3\in\mathbb{R}$ and for fixed $\alpha >0$, we define  
$$F_{\alpha}[y_3](\tau,\xi):=\widehat{q_{ij}}[(\cdot,\cdot,y_3)](\alpha(\tau,\xi)),~~(\tau,\xi)\in\mathbb{R}^{3}.$$
Since we have assumed that $q_{ij}\equiv 0$ outside $\Omega_T$, hence $q_{ij}(\cdot,\cdot,y_3)\in L^{\infty}(\R\times \R^2)$ for any $y_3\in\R$, therefore using the Paley-Wiener's theorem, we have that $F_{\alpha}[y_3]$ is analytic and 
for $\gamma\in (\mathbb{N}\cup \{0\})^{3}$, we have that
\begin{equation}\label{estimate on F}
    \begin{split}
        |\partial^{\gamma}F_{\alpha}[y_3](\tau,\xi)|&=|\partial^{\gamma}\widehat{q_{ij}}[(\cdot,\cdot,y_3)](\alpha(\tau,\xi))|\\&=\Big|\partial^{\gamma}\int_{\mathbb{R}}\int_{\mathbb{R}^{2}}q_{ij}(t,x',y_3)e^{-i\alpha(t,x')\cdot(\tau,\xi)}\,dx'\,dt\Big|\\
        &=\Big|\int_{\mathbb{R}}\int_{\mathbb{R}^{2}}q_{ij}(t,x',y_3)(-i)^{|\gamma|}\alpha^{|\gamma|}(t,x')^{\gamma}e^{-i\alpha(t,x')\cdot(\tau,\xi)}\,dx'\,dt\Big|,
    \end{split}
\end{equation}
Now using the fact that $T>diam(\omega)$, we see that 
\begin{align*}
|\partial^{\gamma}F_{\alpha}[y_3](\tau,\xi)|& \leq \int_{\mathbb{R}}\int_{\mathbb{R}^{2}}|q_{ij}(t,x',y_3)|\alpha^{|\gamma|}(|x'|^2+t^2)^{\frac{|\gamma|}{2}}\,dx'\,dt\\
&\leq \|q_{ij}(\cdot,\cdot,y_3)\|_{L^1([0,T]\times\omega)}\,\alpha^{|\gamma|}\,(2\,T^2)^{\frac{|\gamma|}{2}}\\
&\leq C\|q_{ij}(\cdot,\cdot,y_3)\|_{L^\infty([0,T]\times\omega)}\,\alpha^{|\gamma|}\,(2\,T^2)^{\frac{|\gamma|}{2}}\\
       & \leq C \|q_{ij}\|_{L^{\infty}(\mathbb{R}_{y_3};L^\infty([0,T]\times\omega))}\,\, \frac{|\gamma|!}{(T^{-1})^{|\gamma|}}e^{\alpha}
       \leq C\,T^{|\gamma|}|\gamma|!e^{\alpha}.
\end{align*}
Setting $M=C\,e^{\alpha},\, \eta=T^{-1}$ and $\mathcal{O}=E^o\cap B(0,1)$, where $$E^o=\Big\{(\tau,\xi)\in\mathbb{R}\times (\mathbb{R}^2\backslash\{(0,0)\}),\,\, |\tau|\,<|\xi|\Big\},$$
and subsequently, applying the Lemma \ref{stability_for_analytic_continuation}, we find a constant $\mu\in(0,1)$ such that
\begin{align}\label{stab:temp_1}
|F_{\alpha}[y_3](\tau,\xi)|=|\widehat{q_{ij}}[(\cdot,\cdot,y_3)](\alpha(\tau,\xi))|\leq C\,e^{\alpha (1-\mu)}\|F_{\alpha}[y_3]\|^{\mu}_{L^{\infty}(\mathcal{O})},\quad (\tau,\xi)\in B(0,1).
\end{align}
Since $\alpha E^o=\{\alpha(\tau,\xi): (\tau,\xi)\in E^o\}=E^o$, using \eqref{stab:temp_1} we deduce for $(\tau,\xi)\in B(0,\alpha)$,
\begin{align*}
        |\widehat{q_{ij}}[(\cdot,\cdot,y_3)](\tau,\xi)|=\big|F_{\alpha}[y_3](\alpha^{-1}(\tau,\xi))\big|&\leq C\,e^{\alpha (1-\mu)}\|F_{\alpha}[y_3]\|^{\mu}_{L^{\infty}(\mathcal{O})}\\
        &\leq C\,e^{\alpha (1-\mu)}\|\widehat{q_{ij}}[(\cdot,\cdot,y_3)]\|^{\mu}_{L^{\infty}\big(B(0,\alpha)\cap E^o\big)} \\
        &\leq C\,e^{\alpha (1-\mu)}\|\widehat{q_{ij}}[(\cdot,\cdot,y_3)]\|^{\mu}_{L^{\infty}( E^o)}.
\end{align*}
This gives us the following estimate 
\begin{align}\label{est on ft}
    \|\widehat{q_{ij}}[(\cdot,\cdot,y_3)]\|_{L^{\infty}(B(0,\alpha))}\leq C\,e^{\alpha (1-\mu)}\|\widehat{q_{ij}}[(\cdot,\cdot,y_3)]\|^{\mu}_{L^{\infty}( E^o)}.
\end{align}
Next, we observe that 
\begin{align*}
    \begin{aligned}
        \|q_{ij}[(\cdot,\cdot,y_3)]\|^{2}_{H^{-1}(\mathbb{R}^{3})}&:=\int_{\R^{3}}(1+\lvert (\tau,\xi)\rvert^2)^{-1}\lvert \widehat{q_{ij}}[(\cdot,\cdot,y_3)](\tau,\xi)\rvert^2d\tau d\xi\\
        &= \int_{\lvert(\tau,\xi)\rvert< \alpha}(1+\lvert (\tau,\xi)\rvert^2)^{-1}\lvert \widehat{q_{ij}}[(\cdot,\cdot,y_3)](\tau,\xi)\rvert^2d\tau d\xi \\ &\ \ +\int_{\lvert(\tau,\xi)\rvert\geq  \alpha}(1+\lvert (\tau,\xi)\rvert^2)^{-1}\lvert \widehat{q_{ij}}[(\cdot,\cdot,y_3)](\tau,\xi)\rvert^2d\tau d\xi\\
        &\leq C\left(\alpha^3 \|\widehat{q_{ij}}[(\cdot,\cdot,y_3)]\|^2_{L^{\infty}(B(0,\alpha))}+\alpha^{-2}\|\widehat{q_{ij}}[(\cdot,\cdot,y_3)]\|^2_{L^{2}(\R^{3})}\right).
    \end{aligned}
\end{align*}
Now using the Plancherel theorem together with the boundedness of $q_{ij}$, we get 
\begin{align*}
    \begin{aligned}
     \|q_{ij}[(\cdot,\cdot,y_3)]\|^{2/\mu}_{H^{-1}(\mathbb{R}^{3})}  & \leq C \bigg( \alpha^{3}\|\widehat{q_{ij}}[(\cdot,\cdot,y_3)]\|^{2}_{L^{\infty}(B(0,\alpha))}+\alpha^{-2}\|{q}_{ij}[(\cdot,\cdot,y_3)]\|^{2}_{L^{\infty}([0,T]\times \omega)}\bigg)^{1/\mu}\\
        & \leq C \bigg( \alpha^{3}\|\widehat{q_{ij}}[(\cdot,\cdot,y_3)]\|^{2}_{L^{\infty}(B(0,\alpha))}+\alpha^{-2}\bigg)^{1/\mu}
    .\end{aligned}
\end{align*}
This after using equation \eqref{est on ft} and \eqref{estimate on fourier} gives us 
\begin{equation*}
   \begin{split}
        \|q_{ij}[(\cdot,\cdot,y_3)]\|^{2/\mu}_{H^{-1}(\mathbb{R}^{3})}&\leq C \bigg( \alpha^{3} \,e^{2\alpha (1-\mu)}\|\widehat{q_{ij}}[(\cdot,\cdot,y_3)]\|^{2\mu}_{L^{\infty}( E^o)} +\alpha^{-2} \bigg)^{1/\mu}\\
        &\leq C \bigg( \alpha^{3} \,e^{2\alpha (1-\mu)}\left(\rho^{\beta}\|\Lambda_{q^{(2)}} - \Lambda_{q^{(1)}}\| + \frac{1}{\rho^{\delta}}\right)^{2\mu}+\alpha^{-2} \bigg)^{1/\mu}\\ &\leq C \bigg( \alpha^{\frac{3}{\mu}} \,e^{\frac{2\alpha (1-\mu)}{\mu}}\rho^{2\beta}\|\Lambda_{q^{(2)}} - \Lambda_{q^{(1)}}\|^2 + \alpha^{\frac{3}{\mu}} \,e^{\frac{2\alpha (1-\mu)}{\mu}} \frac{1}{\rho^{2\delta}}+\alpha^{-2/\mu} \bigg),
   \end{split}
\end{equation*}
where $y_3\in \mathbb{R}$.
Let $\alpha_0 >0$ be sufficiently large and consider $\alpha>\alpha_0$. We set $\alpha$ in such a way that
$$\alpha^{\frac{3}{\mu}} \,e^{\frac{2\alpha (1-\mu)}{\mu}} \frac{1}{\rho^{2\delta}}=\alpha^{-2/\mu}.$$
As a consequence, we have
$$\rho=\alpha^{\frac{5}{2\mu\delta}} \,e^{\frac{\alpha (1-\mu)}{\mu\delta}}$$
and 
\begin{equation}\label{H-1 estimate}
    \begin{split}
        \|q_{ij}[(\cdot,\cdot,y_3)]\|^{2/\mu}_{H^{-1}(\mathbb{R}^{3})} &\leq C \bigg( \alpha^{\frac{3\delta+5\beta}{\delta\mu}} \,e^{\frac{2\alpha(\delta+\beta)(1-\mu)}{\delta\mu}}\|\Lambda_{q^{(2)}} - \Lambda_{q^{(1)}}\|^2+\alpha^{-2/\mu} \bigg),\qquad y_3\in \mathbb{R},\nonumber\\
        & \leq C \big( e^{N\alpha}\|\Lambda_{q^{(2)}} - \Lambda_{q^{(1)}}\|^2 +\alpha^{-2/\mu}\big),\qquad y_3\in \mathbb{R},
    \end{split}
\end{equation}
where $N$ depends on $\delta,\beta$ and $\mu$. Now for a fixed $\alpha>0,$ large enough, we choose $0<c<1$ such that $$0<\|\Lambda_{q^{(2)}} - \Lambda_{q^{(1)}}\|<c\,$$ and  
\[e^{N\alpha}\|\Lambda_{q^{(2)}} - \Lambda_{q^{(1)}}\|^2 +\alpha^{-2/\mu}\leq \|\Lambda_{q^{(2)}} - \Lambda_{q^{(1)}}\|^{\mu/2}+\Big|\log\|\Lambda_{q^{(2)}} - \Lambda_{q^{(1)}}\|\Big|^{-1}.\]
Using this in \eqref{H-1 estimate}, we get that 
\begin{align}\label{estima on q}
     \|q_{ij}[(\cdot,\cdot,y_3)]\|^{2/\mu}_{H^{-1}([0,T]\times\omega)}&=\|q_{ij}[(\cdot,\cdot,y_3)]\|^{2/\mu}_{H^{-1}(\mathbb{R}^{3})} \nonumber \\
     &\leq C \bigg(\|\Lambda_{q^{(2)}} - \Lambda_{q^{(1)}}\|+\Big|\log\|\Lambda_{q^{(2)}} - \Lambda_{q^{(1)}}\|\Big|^{-2/\mu}\bigg)^{\mu/2},\nonumber\\
     & \leq C \bigg(\|\Lambda_{q^{(2)}} - \Lambda_{q^{(1)}}\|^{\mu/2}+\Big|\log\|\Lambda_{q^{(2)}} - \Lambda_{q^{(1)}}\|\Big|^{-1}\,\bigg),
\end{align}
where $y_3\in\mathbb{R}$\,. Since the right hand side of \eqref{estima on q} is independent of $y_3\in\mathbb{R}$, therefore we obtain
\begin{align*}
   \|q_{ij}[(\cdot,\cdot,y_3)]\|^{2/\mu}_{L^{\infty}\big(\mathbb{R}_{y_3};H^{-1}([0,T]\times\omega)\big)} \leq C \bigg(\|\Lambda_{q^{(2)}} - \Lambda_{q^{(1)}}\|^{\mu/2}+\Big|\log\|\Lambda_{q^{(2)}} - \Lambda_{q^{(1)}}\|\Big|^{-1}\,\bigg). 
\end{align*}

Also, if $\|\Lambda_{q^{(2)}} - \Lambda_{q^{(1)}}\|>c$, then we have 
\begin{align*}
    \|q_{ij}[(\cdot,\cdot,y_3)]\|_{H^{-1}([0,T]\times\omega)}&\leq C \|q_{ij}[(\cdot,\cdot,y_3)]\|_{L^{\infty}([0,T]\times\omega)}\\&\leq \frac{2CMc^{\mu/2}}{c^{\mu/2}}\leq \frac{2CM}{c^{\mu/2}}\|\Lambda_{q^{(2)}} - \Lambda_{q^{(1)}}\|^{\mu/2}\,,
\end{align*} hence \eqref{estima on q} holds. Thus combining the above estimates, we get 
\[\|q_{ij}[(\cdot,\cdot,y_3)]\|^{2/\mu}_{H^{-1}([0,T]\times\omega)}
\leq C \bigg(\|\Lambda_{q^{(2)}} - \Lambda_{q^{(1)}}\|^{\mu/2}+\Big|\log\|\Lambda_{q^{(2)}} - \Lambda_{q^{(1)}}\|\Big|^{-1}\,\bigg)\]
Now for $s>n/2$, using the Sobolev embedding theorem, we get that 
 \begin{equation}  \label{Sobolev embedding}      
 \|q_{ij}[(\cdot,\cdot,y_3)]\|_{L^{\infty}([0,T]\times\omega)} \leq C \|q_{ij}[(\cdot,\cdot,y_3)]\|_{H^{s}([0,T]\times\omega)}.
 \end{equation}
 Finally using the Sobolev interpolation theorem, we have that 
      \begin{align*}
          \begin{aligned}
              \|q_{ij}[(\cdot,\cdot,y_3)]\|_{H^{s}([0,T]\times\omega)}  & \leq C  \|q_{ij}[(\cdot,\cdot,y_3)]\|^{1-\beta}_{H^{-1}([0,T]\times\omega)}\|q_{ij}[(\cdot,\cdot,y_3)]\|^{\beta}_{H^{s+1}([0,T]\times\omega)}\\
          & \leq C  \|q_{ij}[(\cdot,\cdot,y_3)]\|^{1-\beta}_{H^{-1}([0,T]\times\omega)},
          \end{aligned}
      \end{align*} 
 where the parameter $\beta\in(0,1)$ and constant $C>0$ independent of $q_{ij}$. 
 Combining this with 
 \eqref{Sobolev embedding} and using  \eqref{estima on q}, we obtain
 \begin{align*}
     \|q_{ij}\|_{L^{\infty}(\Omega_T)} \leq C \bigg(\|\Lambda_{q^{(2)}} - \Lambda_{q^{(1)}}\|^{\mu/2}+\Big|\log\|\Lambda_{q^{(2)}} - \Lambda_{q^{(1)}}\|\Big|^{-1}\,\bigg).
 \end{align*}
Since the above estimate holds for every $i,j\in \{1,2,\dots,n\}$, we have demonstrated the required stability estimate. Hence the Theorem \ref{main result} follows.
 \end{proof} 
\subsubsection{Proof of Theorem \ref{main result_with_partial_data}}
Hereby, we give a proof of the Theorem \ref{main result_with_partial_data}. As GO solutions are one of the main ingredients for stability estimate, we require appropriate assumptions on GO solutions $\overrightarrow {u}_{\rho}$ such that $\overrightarrow {u}_{\rho}|_{\Sigma_R}\in \LS_R$. More precisely, we will modify $h\in\mathcal{S}(\mathbb{R})$ in the Lemma \ref{lemma 1 for stab} for our analysis.
\begin{lemma}\label{lemma 1 for stab_with_partial_data}
Let $q^{(1)},q^{(2)}\in W^{1,\infty}(\Omega_T)$, with $\|q^{(j)}\|\leq M, j=1,2,$ and let matrix potential $q$ of size $n\times n$ be equal to $q^{(1)}-q^{(2)}$ and extended by zero outside of $\Omega_T$ . Then, for all $\overrightarrow {K}, \overrightarrow {K}^{(*)}\in\mathbb{R}^n$, $\theta \in  \mathbb{S}^1$, $h \in \mathcal{C}^{\infty}_0((-R,R))$  and $\varphi\in \mathcal{C}_{0}^{\infty}(\mathbb{R}^{2})$ we have 
\begin{equation}\label{first ineq for stab_with_partia_data}
  \begin{split}
\biggr|\int_{\mathbb{R}}\int_{\mathbb{R}^2}\int_{0}^{T}\Big(q(t,x',x_3) &\overrightarrow {K}\cdot\overrightarrow {K}^{(*)}\Big)\varphi^{2}(x'+ t\theta)h^2(x_3)\,dt\,dx'\,dx_3\biggr|\\ 
& \leq C\bigg(\frac{1}{\rho} + \rho^4 \|\Lambda^{(R)}_{q^{(2)}} - \Lambda^{(R)}_{q^{(1)}}\| \bigg)\|\varphi\|^2_{H^{3}(\mathbb{R}^{2})}\|h\|^2_{H^{2}(\mathbb{R})},
  \end{split}
\end{equation}
for any $\rho >1$ sufficiently large.  Here the constant $C>0$ depends on $R,\Omega, T, M, \overrightarrow {K}$ and $\overrightarrow {K}^{(*)}$.
\end{lemma}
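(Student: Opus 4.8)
The plan is to reproduce the argument used for Lemma \ref{lemma 1 for stab} almost verbatim, the only new ingredient being that the compact support of $h$ in $(-R,R)$ forces every piece of boundary data fed into the input--output map to lie in $\mathbf{L}_R$, and forces the lateral integral in the integral identity \eqref{intid2} to be supported in $\Sigma_R$. Since $\mathcal{C}^\infty_0((-R,R))\subset\mathcal{S}(\mathbb{R})$, Lemma \ref{gocons5} applies unchanged, so for $j=1,2$ I would first construct the forward GO solutions $\overrightarrow {u}^{(j)}_{\rho}$ of the form \eqref{GO_sol_u_i} and the adjoint GO solution $\overrightarrow {v}$ of the form \eqref{adjoint_v_go_sol}, now with this compactly supported $h$.

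Next I would record the key support property. On $\Sigma$ one has $\overrightarrow {\Psi}^{(j)}=\overrightarrow {\Psi}^{(*)}=\overrightarrow {0}$, whence
\[
\overrightarrow {u}^{(1)}_{\rho}\big|_{\Sigma}=\overrightarrow {K}\,\varphi(x'+ t\theta)h(x_3)e^{i\rho(x'\cdot\theta+t)},\qquad \overrightarrow {v}\big|_{\Sigma}=\overrightarrow {K}^{(*)}\,\varphi(x'+ t\theta)h(x_3)e^{-i\rho(x'\cdot\theta+t)}.
\]
Because $h$ vanishes for $|x_3|\ge R$, the trace $\overrightarrow {f}_\rho:=\overrightarrow {u}^{(1)}_{\rho}\big|_{\Sigma}$ vanishes for $|x_3|\ge R$, so $(\overrightarrow {\phi}_\rho,\overrightarrow {\psi}_\rho,\overrightarrow {f}_\rho)\in\mathbf{L}_R$ and the restricted map $\Lambda^{(R)}_{q^{(j)}}$ is well defined on this data. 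Likewise $\overrightarrow {v}\big|_{\Sigma}$ is supported in $|x_3|<R$.

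The crucial step is the reduction of the lateral integral. Substituting the GO solutions into \eqref{intid2} exactly as in the derivation of \eqref{integral1}, the integrand of the term $\int_{\Sigma}\big(\Lambda^{3}_{q^{(2)}}-\Lambda^{3}_{q^{(1)}}\big)(\overrightarrow {\phi}_\rho,\overrightarrow {\psi}_\rho,\overrightarrow {f}_\rho)\cdot\overrightarrow {v}\,dt\,dS$ carries the factor $h(x_3)$ coming from $\overrightarrow {v}\big|_{\Sigma}$ and hence is pointwise zero whenever $|x_3|\ge R$. Consequently this integral equals $\int_{\Sigma_R}\big(\Lambda^{3}_{q^{(2)}}-\Lambda^{3}_{q^{(1)}}\big)(\overrightarrow {\phi}_\rho,\overrightarrow {\psi}_\rho,\overrightarrow {f}_\rho)\cdot\overrightarrow {v}\,dt\,dS$, and on $\Sigma_R$ the Neumann trace $\partial_\nu\overrightarrow {u}$ is precisely the third component of the partial map $\Lambda^{(R)}$. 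With this replacement, the H\"older/Minkowski estimate \eqref{estimates_for_input_component} goes through with $\Lambda$ replaced by $\Lambda^{(R)}$ and $\mathbf{L}^2(\Sigma)$ replaced by $\mathbf{L}^2(\Sigma_R)$, so that $\|\Lambda^{(R)}_{q^{(2)}}-\Lambda^{(R)}_{q^{(1)}}\|$ appears in place of $\|\Lambda_{q^{(2)}}-\Lambda_{q^{(1)}}\|$.

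Finally, the remainder $\int_{\Omega_T}R_\rho\,dt\,dx$ and the final-time volume term over $\Omega$ are estimated exactly as before: \eqref{psi inequ} gives the $O(1/\rho)$ contribution \eqref{Rrho}, while \eqref{u1_at_bdry} bounds the traces by $C\rho^2\|\varphi\|_{H^{3}(\mathbb{R}^{2})}\|h\|_{H^{2}(\mathbb{R})}$, yielding the $O\big(\rho^4\|\Lambda^{(R)}_{q^{(2)}}-\Lambda^{(R)}_{q^{(1)}}\|\big)$ block as in \eqref{IIq estim}. Combining these gives \eqref{first ineq for stab_with_partia_data}. I expect the only genuinely new point, and hence the one to verify with care, to be the support/reduction argument on $\Sigma$ that lets the full lateral measurement be replaced by the restricted one on $\Sigma_R$; the remaining estimates are a line-by-line transcription of the proof of Lemma \ref{lemma 1 for stab}.
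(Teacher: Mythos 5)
Your proposal is correct and matches the paper's intended argument: the paper states this lemma without writing out a proof, indicating only that one should replace $h\in\mathcal{S}(\mathbb{R})$ in Lemma \ref{lemma 1 for stab} by $h\in\mathcal{C}^{\infty}_0((-R,R))$, which is precisely the modification you carry out line by line. Your identification of the key support/reduction step --- the traces of the GO solutions carry the factor $h(x_3)$ and hence vanish for $|x_3|\geq R$, so the data lies in $\mathbf{L}_R$ and the lateral integral in \eqref{intid2} collapses to $\Sigma_R$, legitimizing the substitution of $\Lambda^{(R)}$ for $\Lambda$ --- is exactly the point on which the lemma rests.
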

With the help of Lemma \ref{lemma 1 for stab_with_partial_data}, we complete the proof of Theorem \ref{main result_with_partial_data}.
\begin{proof}[Proof of Theorem \ref{main result_with_partial_data}]
 Let us consider the matrix potential $q$ as introduced in \eqref{extending_q_outside_of_the_domain} with taking values zero outside of $\Omega_T$. In addition, the functions $\varphi_\eps$ and $h_\eps$, which were introduced in the Lemma \ref{lem:esti_q}, are taken into consideration with $y_3\in(-r,r)$. Therefore, combining Lemma \ref{lemma 1 for stab_with_partial_data} with Lemma \ref{lem:esti_q} and Lemma \ref{estimate_on_FourierT_of_q}, we obtain the estimate
  \begin{equation}\label{estimate on fourier_with_partial_boundary_data}
       \Big|\widehat{q_{ij}}[(\cdot,\cdot,y_3)](\tau,\xi)\Big|\leq C \left(\rho^{\beta}\|\Lambda^{(R)}_{q^{(2)}} - \Lambda^{(R)}_{q^{(1)}}\| + \frac{1}{\rho^{\delta}}\right),\qquad (\tau,\xi)\in E,
   \end{equation}
   where $\beta,\delta >0$ and  $\rho\geq\rho_0>0$. Afterwards, following the proof of Theorem \ref{main result}, we obtain for $i,j\in\{1,2,\dots,n\},$  
   \begin{align}\
     \|q_{ij}\|^{2/\mu}_{L^{\infty}((0,T)\times\omega\times(-r,r))}\leq C \bigg(\|\Lambda^{(R)}_{q^{(1)}} - \Lambda^{(R)}_{q^{(2)}}\|^{\mu/2}+\Big|\log\|\Lambda^{(R)}_{q^{(1)}} - \Lambda^{(R)}_{q^{(2)}}\|\Big|^{-1}\,\bigg),
\end{align}
where $C>0$ depends on $R,\Omega, M$ and $T$. Taking into account the assumption \eqref{q_in_bounded_domain}, we have the estimate \eqref{main_stability_estimate_for_q_with_partial_boundary_data}.
\end{proof}
\section*{Acknowledgements}
MV is supported by ISIRD
project from IIT Ropar and Start-up Research Grant SRG/2021/001432 from the Science and Engineering
Research Board, Government of India. The authors would like to thank the anonymous reviewers for their valuable remarks and suggestions.

\end{document}